\def\C{{\mathbf C}}
\def\R{{\mathbf R}}
\def\Z{{\mathbf Z}}
\def\Q{{\mathbf Q}}
\def\A{{\mathbf A}}
\newtheorem{theorem}{Theorem}[section]
\newtheorem{lemma}[theorem]{Lemma}
\newtheorem{proposition}[theorem]{Proposition}
\newtheorem{corollary}[theorem]{Corollary}
\newtheorem{claim}[theorem]{Claim}
\theoremstyle{definition}
\newtheorem{definition}[theorem]{Definition}
\theoremstyle{remark}
\newtheorem{remark}[theorem]{Remark}
\newcommand{\mm}[4]{\left(\begin{smallmatrix} #1 & #2\\ #3 & #4\end{smallmatrix}\right)}
\newcommand{\mb}[4]{\left(\begin{array}{cc} #1 & #2\\ #3 & #4\end{array}\right)}
\DeclareMathOperator{\val}{val}
\DeclareMathOperator{\tr}{tr}
\DeclareMathOperator{\SO}{SO}
\DeclareMathOperator{\Spin}{Spin}
\DeclareMathOperator{\Sp}{Sp}
\DeclareMathOperator{\GSp}{GSp}
\DeclareMathOperator{\PGSp}{PGSp}
\DeclareMathOperator{\SU}{SU}
\DeclareMathOperator{\SL}{SL}
\DeclareMathOperator{\GL}{GL}
\DeclareMathOperator{\charf}{char}
\DeclareMathOperator{\diag}{diag}
\DeclareMathOperator{\Hom}{Hom}
\def\g{{\mathfrak g}}
\def\h{{\mathfrak h}}
\def\k{{\mathfrak k}}
\def\p{{\mathfrak p}}
\def\sl{{\mathfrak sl}}
\def\n{{\mathfrak n}}
\def\V{{\mathbb V}}
\def\W{{\mathcal W}}
\begin{document}
\title{Modular forms on $G_2$ and their standard $L$-function}
\author{Aaron Pollack}
\address{Department of Mathematics\\ Institute for Advanced Study\\ Princeton, NJ USA}
\email{aaronjp@math.ias.edu}
\address{Department of Mathematics\\ Duke University\\ Durham, NC USA}
\email{apollack@math.duke.edu}
\thanks{This work supported by the Schmidt fund at IAS.  We thank the IAS for its hospitality and for providing an excellent working environment.  We also thank the Simons Foundation for its support of the symposium on the Relative Trace Formula, where some aspects of this work were discussed}

\begin{abstract} The purpose of this partly expository paper is to give an introduction to modular forms on $G_2$.  We do this by focusing on two aspects of $G_2$ modular forms.  First, we discuss the Fourier expansion of modular forms, following work of Gan-Gross-Savin and the author.  Then, following Gurevich-Segal and Segal, we discuss a Rankin-Selberg integral yielding the standard $L$-function of modular forms on $G_2$.  As a corollary of the analysis of this Rankin-Selberg integral, one obtains a Dirichlet series for the standard $L$-function of $G_2$ modular forms; this involves the arithmetic invariant theory of cubic rings.  We end by analyzing the archimedean zeta integral that arises from the Rankin-Selberg integral when the cusp form is an even weight modular form.\end{abstract}
\maketitle

\section{Introduction}  If a reductive group $G$ over $\Q$ has a Hermitian symmetric space $G(\R)/K$, then there is a notion of \emph{modular forms} on $G$.  Namely, one can consider the automorphic forms on $G$ that give rise to holomorphic functions on $G(\R)/K$.  However, if $G$ does not have a Hermitian symmetric space, then $G$ has no obvious notion of modular forms, or very special automorphic forms.  The split exceptional group $G_2$ is an example: its symmetric space $G_2(\R)/\left(\SU(2) \times\SU(2)\right)$ has no complex structure.  Nevertheless, Gan-Gross-Savin \cite{ganGrossSavin} defined and studied modular forms on $G_2$ by building off of work of Gross-Wallach \cite{grossWallach2} on quaternionic discrete series representations.

The purpose of this paper is to give an introduction to modular forms on $G_2$, with the hope that we might interest others in this barely-developed area of mathematics.  Indeed, very little is known about $G_2$-modular forms.  For example, as pointed out in \cite{ganGrossSavin}, one does not even know the smallest positive integer $k$ for which there is a nonzero level-one cuspidal modular form on $G_2$ of weight $k$.

Besides recalling for the reader some basic facts about the group $G_2$ and the definition of its modular forms, we focus on two aspects of $G_2$-modular forms.  The first thing we focus on is their Fourier expansion.  Using a certain multiplicity one result of Wallach \cite{wallach} on ``generalized Whittaker vectors'', Gan-Gross-Savin defined the (generic) Fourier coefficients of a $G_2$ modular form.  This involves Wallach's multiplicity one result and the arithmetic invariant theory of cubic rings.  Using their definition of Fourier coefficients, Gan-Gross-Savin computed the Fourier coefficients of an explicit theta function and (using \cite{jiangRallis}) some Eisenstein series, and also computed how Hecke operators act on these Fourier coefficients.  

The paper \cite{pollackQDS} makes explicit the Fourier \emph{expansion} of modular forms, i.e., it involves functions on $G_2(\R)$ and not just the Fourier coefficients, which are complex numbers.  Furthermore, as a consequence of \emph{loc cit} one can also define the non-generic Fourier coefficients of $G_2$-modular forms.  The first result we discuss is this Fourier expansion of $G_2$-modular forms.  This is the $G_2$-special case of the main result of \cite{pollackQDS}.  Because the constructions and arguments in \emph{loc cit} cover all the exceptional Dynkin types, they rely on sometimes lengthy Jordan algebra computations.  For $G_2$ one does not need Jordan algebras, so we give a treatment that is simpler, and that we expect is easier for the non-expert to follow.

The second aspect of $G_2$-modular forms that we discuss is their standard $L$-function.  Here we rely on the exceptional papers \cite{gurevichSegal} and \cite{segal} of Gurevich-Segal and Segal.  These works give a Rankin-Selberg integral yielding the degree $7$ standard $L$-function of automorphic cusp forms on $G_2$.  Moreover, the integrals of \emph{loc cit} are non-vanishing for $G_2$-modular forms, and in fact rely on the same type of Fourier coefficients mentioned above.  (The older Rankin-Selberg integral \cite{ginzburgG2} of Ginzburg relies on a Whittaker coefficient and vanishes identically for $G_2$-modular forms.)

The beautiful results of the papers \cite{segal} and \cite{gurevichSegal} have somewhat involved proofs.  We give a streamlined account of the integrals of \emph{loc cit} by simplifying a bit some of the computations.  We then use this analysis of the Rankin-Selberg integral to a give a Dirichlet series for the standard $L$-function of modular forms on $G_2$.  The Dirichlet series has a nice expression in terms of cubic rings and the Fourier coefficients of modular forms.

The Dirichlet series comes from analyzing the local zeta integrals at the good finite primes that arise from the global Rankin-Selberg convolution.  One can also ask for an analysis of the archimedean zeta integral, when the cusp form involved is a modular form.  Given our computation of the Fourier expansion of modular forms, this question becomes tractable.  In the final section, we analyze this archimedean zeta integral and relate it to products of gamma functions.

We now give an outline of the rest of the paper.  In section \ref{sec:G2andOct}, we review the octonions and basic facts about the group $G_2$.  Next, in section \ref{sec:MFs}, we recall the definition from \cite{ganGrossSavin} of modular forms on $G_2$ and their Fourier coefficients.  We also state the results about the Fourier expansion (Theorem \ref{thm:FE}) and Dirichlet series (Theorem \ref{thm:DS}) of $G_2$-modular forms.  In section \ref{sec:genWhit}, we prove Theorem \ref{thm:FE}, after \cite{pollackQDS}.  In section \ref{sec:RS} we discuss the Rankin-Selberg integral of \cite{gurevichSegal} and \cite{segal}, and also the Dirichlet series for the standard $L$-function.  In particular, we prove Theorem \ref{thm:DS}.  Finally, in section \ref{sec:arch}, we apply our results on the Fourier expansion of modular forms to analyze the archimedean zeta integral that arises from the Rankin-Selberg integral of \emph{loc cit} when the cusp form is a modular form.

\section{Octonions and the group $G_2$}\label{sec:G2andOct} In this section, we define the group $G_2$ as the automorphisms of the split octonions, and review basic structural information about $G_2$: its Lie algebra, Heisenberg parabolic subgroup, Cartan involution, and maximal compact subgroup.

\subsection{The octonions} In this subsection we very briefly review the octonions, following the discussion in \cite[Section 1.1]{pollackWanZydor}.  The reader might see \cite{springerVeldkampBook} for a more thorough treatment.

Thus suppose $F$ is field of characteristic $0$.  We denote by $\Theta$ the split octonions over $F$.  Let $V_3$ denote the defining representation of the group $\SL_3$, and $V_3^\vee$ the dual representation. As a vector space,
\[\Theta = \left\{\mb{a}{v}{\phi}{d}: a,d \in F, v \in V_3, \phi \in V_3^\vee\right\}.\]
This is the so-called Zorn model of the octonions.  

As an $F$-algebra, $\Theta$ is non-commutative and non-associative.  Because $V_3$ is considered as a representation of $\SL_3$, we assume given an identification $\wedge^3 V_3 \simeq F$, which induces $\wedge^2 V_3 \simeq V_3^\vee$ and $\wedge^2 V_3^\vee \simeq V_3$.  The multiplication in $\Theta$ is given as
\[\left(\begin{array}{cc} a & v \\ \phi & d \end{array}\right) \left(\begin{array}{cc} a' & v' \\ \phi' & d'\end{array}\right) = \left(\begin{array}{cc} aa'+ \phi'(v) & av' +d'v - \phi \wedge \phi' \\ a'\phi + d\phi'+ v \wedge v' & \phi(v') + dd'\end{array}\right).\]

The octonions also come equipped with a quadratic norm $N: \Theta \rightarrow F$ defined by $N\mm{a}{v}{\phi}{d} = ad - \phi(v)$. If $x, x' \in \Theta$, then $N(xx') = N(x)N(x')$.  Associated to the norm $N$ is a symmetric bilinear form defined by $(x,y) = N(x+y) - N(x) - N(y)$. For $x = \left(\begin{array}{cc} a & v \\ \phi & d \end{array}\right)$ and $x'=\left(\begin{array}{cc} a' & v' \\ \phi' & d'\end{array}\right)$, one has
\[(x,x') = ad' + a'd - \phi(v') - \phi'(v).\]
This bilinear form is non-degenerate, and is split.  I.e., there are isotropic subspaces of $\Theta$ of dimension four. 

Like a ring of quaternions over $F$, the octonions come equipped with a conjugation $*:\Theta \rightarrow \Theta$, which is an order-reversing involution on $\Theta$.  The conjugate $x^*$ of $x = \mb{a}{v}{\phi}{d}$ is $x^* = \mb{d}{-v}{-\phi}{a}$.  One has $x x^* = N(x) 1$ inside $\Theta$.  Define the trace of $x \in\Theta$ as $\tr(\mm{a}{v}{\phi}{d}) = a+d$.  For $x \in \Theta$, one has the quadratic equation $x^2 = \tr(x)x-N(x)$.

Write $V_7$ for the elements of $\Theta$ with trace $0$.  Thus $\mm{a}{v}{\phi}{d} \in V_7$ if and only if $a+d = 0$.  The trace $0$ elements are the perpendicular space to the line generated by the element $1 \in \Theta$.  For $x \in \Theta$, we write $Im(x) = \frac{1}{2}(x - x^*)$, so that $x = \tr(x)/2 + Im(x)$.  This is the orthogonal decomposition of $x$ into $F \cdot 1 \oplus V_7$.

The split algebraic group $G_2$ over $F$ is defined to be the automorphisms of $\Theta$: 
\[G_2(F) = \{g \in \GL(\Theta): g (x \cdot y) = gx \cdot gy \text{ for all } x, y \in \Theta\}.\]
One has $g 1 = 1$, and $g(x^*) = (gx)^*$.  Since $G_2$ stabilizes the element $1$, it acts on $V_7$, preserving the induced quadratic form.  This representation $G_2 \rightarrow \GL(V_7)$ factors through $\SO(V_7)$.

The group $G_2$ has two conjugacy classes of maximal parabolic subgroups.  Their associated flag varieties can be neatly described in terms of subspaces of $\Theta$ with particular properties under the multiplication on $\Theta$. See, for example, \cite[Proposition 1.3]{pollackWanZydor}.

\subsection{The Lie algebra $\mathfrak{g}_2$.} One can write down the Lie algebra of $G_2$ relatively easily inside the Lie algebra of $\SO(V_7)$.  To do this, first recall that if $V$ is a non-degenerate quadratic space, then $\mathfrak{so}(V) \simeq \wedge^2 V$.  In this identification, an element $w \wedge x$ acts (on the left) of $V$ as
\[(w\wedge x) \cdot v = (x,v) w - (w,v) x.\]
The Lie bracket in this notation is
\begin{equation}\label{bracket}[w \wedge x, y \wedge z] = (x,y) w\wedge z - (x,z) w \wedge y - (w,y) x \wedge z + (w,z) x \wedge y.\end{equation}

The multiplication on $\Theta$ defines a map $V_7 \otimes V_7 \rightarrow V_7$ via $w \otimes x \mapsto Im(wx)$. This map is alternating, i.e., $Im(xw) = -Im(wx)$ for $x,w \in V_7$, and thus induces a map $\wedge^2 V_7 \rightarrow V_7$. This map is surjective and $G_2$-equivariant. Denote by $\g$ the kernel of $\wedge^2 V_7 \rightarrow V_7$.  Then $\g$ is closed under the Lie bracket induced from (\ref{bracket}), and is the Lie algebra of $G_2$.

We now give an explicit basis for $\g$, and make some special notation which we will use throughout.  First, denote by $e_1, e_2, e_3$ a fixed basis of $V_3$, and write $e_1^*, e_2^*, e_3^*$ for the basis of $V_3^\vee$ dual to the $e_i$.  Set $u_0 = \mm{1}{}{}{-1} \in V_7$.  We will abuse notation and also let $e_i, e_j^*$ denote elements in $V_7$.  Thus $(e_i,e_j^*) = - \delta_{ij}$, $(u_0,u_0) = -2$, and $(u_0,e_i) = (u_0,e_j^*) = 0$ for all $i,j$.

We set $E_{kj} = e_j^* \wedge e_k$, $v_j = u_0 \wedge e_j + e_{j+1}^* \wedge e_{j+2}^*$, and $\delta_j = u_0 \wedge e_j^* + e_{j+1} \wedge e_{j+2}$ (indices taken modulo three).  One checks immediately from the definition of multiplication in $\Theta$ that the elements $v_j$ and $\delta_j$ are in the kernel of $\wedge^2 V_7 \rightarrow V_7$, and thus in $\g$.  The same goes for $E_{kj}$ so long as $j \neq k$.  A sum $\alpha_1 E_{11} + \alpha_2 E_{22} + \alpha_3 E_{33}$ is in $\g$ if and only if $\alpha_1 +\alpha_2 +\alpha_3 = 0$.

The above elements span $\g$.  We write 
\[\h = \{\alpha_1 E_{11} + \alpha_2 E_{22} + \alpha_3 E_{33}: \alpha_1 +\alpha_2 +\alpha_3 = 0\}.\]
This is a Cartan subalgebra of $\g$.  The Cartan subalgebra $\h$ acts on $E_{kj}$ by $\alpha_k - \alpha_j$, i.e.,
\[[\sum_{i}{\alpha_i E_{ii}}, E_{kj}] = (\alpha_k-\alpha_j) E_{kj}.\]
These are the long roots for $\h$.  Together with $\h$, the $E_{jk}$ span the Lie algebra $\sl_3$.  The Cartan $\h$ acts on $v_j$ via $\alpha_j$ and $\delta_j$ via $-\alpha_j$.  These are the short roots.

One has the following Lie bracket relations, which can be checked easily.  
\begin{itemize}
\item $[\delta_{j-1},v_j] = 3 E_{j,j-1}$
\item $[v_{j-1},\delta_j] = -3 E_{j-1,j}$
\item $[\delta_{j-1},\delta_j] = 2 v_{j+1}$
\item $[v_{j-1},v_j] = 2 \delta_{j+1}$
\item $[\delta_j,v_j] = 3E_{jj}- (E_{11}+E_{22}+E_{33})$.\end{itemize}
All indices here are taken modulo $3$.  We will choose a positive system on $\g$ by letting $E_{12}$ and $v_2$ be the positive simple roots.

Abstractly, $\g=\g_2 = \sl_3 \oplus V_3 \oplus V_3^\vee$, and this is a $\Z/3$-grading. See, e.g., \cite[chapter 22]{fultonHarris}. In fact, all the (split) exceptional Lie algebras have $\Z/3$-grading that generalizes this one.  See \cite{rumelhart} or \cite{savin}.  In this $\Z/3$-model, the Lie bracket is given as follows:
\begin{enumerate}
\item the commutators $[\sl_3, V_3]$ and $[\sl_3, V_3^\vee]$ are given by the action of $\sl_3$ on $V_3$ and $V_3^\vee$;
\item the commutators $[V_3, V_3]$ and $[V_3^\vee, V_3^\vee]$ are given by\footnote{The $2$ here is $1_J \times 1_J = 2 1_J$} $[x,y] = 2 x \wedge y \in V_3^\vee$ for $x,y \in V_3$ and $[\gamma, \delta] = 2 \gamma \wedge \delta \in V_3$ for $\gamma, \delta \in V_3^\vee$;
\item if $x \in V_3$ and $\gamma \in V_3^\vee$ then\footnote{The $3$ here is $\tr_{J}(1_J)$} $[\gamma, x] = 3x \otimes \gamma - (x,\gamma) 1_3$, which is in $\sl_3$.
\end{enumerate}
The elements $E_{ij}$, $v_j$, $\delta_k$ of $\g$ defined above are simply the standard basis vectors for $\sl_3$, $V_3$ and $V_3^\vee$ in the decomposition $\g = \sl_3 \oplus V_3 \oplus V_3^\vee$.

\subsection{The Heisenberg parabolic subgroup}  We now describe the Heisenberg parabolic subgroup $P = MN$ of $G_2$.  This is the subgroup that stabilizes the two-dimensional subspace of $V_7$ spanned by $e_3^*$ and $e_1$.  The unipotent radical $N$ is two-step, $N \supseteq N_0 \supseteq 1$, with $N/N_0$-four-dimensional abelian, and $N_0 = [N,N]$ one-dimensional.  The Lie algebra $\n$ of $N$ is spanned by $v_1, \delta_3$, and the strictly upper triangular matrices in $\sl_3$.  The element $E_{13}$ of $\sl_3$ spans $N_0$.  

We chose the Levi subgroup $M$ of $P$ by requiring it to have Lie algebra spanned by $v_2, \delta_2$, and the Cartan $\h$.  This Levi subgroup $M$ of the Heisenberg parabolic is isomorphic to $\GL_2$.  The isomorphism $M \simeq \GL_2$ is chosen so that $Ad(m) E_{13} = \det(m) E_{13}$.  The conjugation action of $M$ on $N/N_0$ is the representation $Sym^3(V_2) \otimes \det(V_2)^{-1}$ of $\GL_2$.  On the Lie algebra level, the isomorphism $M_2(F) \simeq \mathfrak{m}$ is given by
\begin{equation} \label{gl2Ident} \mm{a}{b}{c}{d} \mapsto d E_{11} + (a-d)E_{22} - aE_{33} + b v_2 - c \delta_2.\end{equation}

\subsection{Cartan involution}\label{subsec:Cartan}  Before defining a Cartan involution $\Theta$ on $\g$ and a maximal compact subgroup $K$ of $G_2(\R)$, we briefly discuss the Killing form on $\g$.  If $V$ is a quadratic space, then the Killing form on $\mathfrak{so}(V) \simeq \wedge^2 V$ is proportional to the quadratic form given by $(w\wedge x, y\wedge z) = (w,z)(x,y) - (w,y)(x,z)$.  This pairing on $\wedge^2 V_7$ induces one on $\g$ which is proportional to the Killing form on $\g$.

Denote $\iota: \Theta\rightarrow \Theta$ the involution given by $\iota\left(\mb{a}{v}{\phi}{d}\right) = \mb{d}{-\tilde{\phi}}{-\tilde{v}}{a}$.  Here $\tilde{e_i} = e_i^*$, $\tilde{e_j^*} = e_j$ and $\tilde{\cdot}$ is extended by linearity.  The involution $\iota$ has $\pm 1$ eigenspaces on $\Theta$ each with dimension $4$, and the quadratic form $(\cdot, \cdot )$ on $\Theta$ is positive-definite on the $+1$-eigenspace, and negative-definite on the $-1$ eigenspace.  

The element $1 \in \Theta$ is fixed by $\iota$, and abusing notation we denote by $\iota$ the restriction of this involution to $V_7$.  Then $\iota$ induces an involution $\theta$ on $\wedge^2 V_7$ via $\theta( w \wedge x) = \iota(w) \wedge \iota(x)$.  Since $\iota(u_0) = -u_0$, $\iota(e_i) = -e_i^*$, and $\iota(e_i^*) = -e_i$, it is clear from our basis above that $\theta$ preserves $\g \subseteq \wedge^2 V_7$.

\begin{claim} The involution $\theta$ is a Cartan involution on $\g$.\end{claim}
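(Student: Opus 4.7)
The plan is to verify the three defining conditions for a Cartan involution: that $\theta$ is an involutive Lie algebra automorphism of $\g$, and that the bilinear form $B_\theta(X,Y) := -B(X,\theta Y)$ is positive definite, where $B$ denotes the Killing form on $\g$.

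The first two conditions come almost for free. One has $\theta^2 = 1_{\g}$ because $\iota^2 = 1_{V_7}$, and the text has just observed that $\theta$ preserves $\g \subseteq \wedge^2 V_7$. For the automorphism property, $\iota$ is an isometry of $(V_7,(\cdot,\cdot))$ (a short check using $N(\iota x)=N(x)$), so its induced action $\theta$ on $\mathfrak{so}(V_7) \simeq \wedge^2 V_7$ is a Lie algebra automorphism with respect to the bracket in \eqref{bracket}, and restricting to the $\theta$-stable subalgebra $\g$ preserves this bracket.

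The substance of the claim is the positive-definiteness. I would decompose $V_7 = V_7^+ \oplus V_7^-$ into $\pm 1$-eigenspaces of $\iota$, with $V_7^+$ spanned by $e_i - e_i^*$ ($i=1,2,3$) and $V_7^-$ spanned by $u_0$ together with $e_i + e_i^*$. Using $(e_i, e_j^*) = -\delta_{ij}$ and $(u_0,u_0) = -2$, the form $(\cdot,\cdot)$ is positive definite on $V_7^+$ and negative definite on $V_7^-$, consistent with the remark on the signature of $\Theta$ in the paragraph above. The corresponding eigenspace decomposition of $\theta$ on $\wedge^2 V_7$ is then
\[(\wedge^2 V_7)^{\theta = +1} = \wedge^2 V_7^+ \,\oplus\, \wedge^2 V_7^-, \qquad (\wedge^2 V_7)^{\theta = -1} = V_7^+ \wedge V_7^-.\]
Applying the formula $(w\wedge x, w\wedge x) = (w,x)^2 - (w,w)(x,x)$ to orthogonal pairs in these subspaces, the surviving term $-(w,w)(x,x)$ is negative whenever $w$ and $x$ lie in the same $\iota$-eigenspace and positive when they lie in opposite ones. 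Hence the form on $\wedge^2 V_7$ is negative definite on the $(+1)$-eigenspace of $\theta$ and positive definite on the $(-1)$-eigenspace, so $-(X,\theta X)$ is positive definite on $\wedge^2 V_7$, and a fortiori on $\g$.

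To transfer this to the Killing form, I would invoke the proportionality $B = c\cdot (\cdot,\cdot)|_\g$ stated at the start of the subsection, and confirm that $c > 0$. This one checks by evaluating both forms on $X = E_{11} - E_{22} \in \h$: the $\wedge^2 V_7$-form gives $(X,X) = 2$, while $B(X,X) > 0$ because the Killing form is positive definite on the split Cartan of the split real Lie algebra $\g$. Hence $c > 0$, and $B_\theta$ is positive definite, as required. The main obstacle is really just the sign bookkeeping in the last two paragraphs — matching the eigenspaces of $\iota$ to the signs of $(\cdot,\cdot)$ on $V_7$, and pinning down the sign of the proportionality constant between $B$ and the ambient form on $\wedge^2 V_7$ — but no deeper input is needed.
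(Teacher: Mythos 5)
Your proposal is correct and follows the same route as the paper: check that $\theta$ is an involutive automorphism because $\iota \in O(V_7)$, then verify positive-definiteness of $B_\theta$ directly from the definition. The paper simply asserts the last step as "easy"; your signature computation on the $\iota$-eigenspaces of $V_7$, the induced signs on $\wedge^2 V_7$, and the positivity of the proportionality constant between $B$ and the ambient form supply exactly the details it omits.
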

\begin{proof} Because $\iota \in O(V_7)$, $\theta$ preserves the bracket on $\mathfrak{so}(V_7)$, and thus also on $\g$.  It is clear as well that $\theta$ is an involution, because $\iota$ is. That $\theta$ is a Cartan involution now follows easily from the definition: I.e., $B_{\theta}(x,y) := - B(x,\theta(y))$ is positive-definite on $\g$, where $B$ is the Killing form.\end{proof}

The maximal compact subgroup of $G_2(\R)$ is $(\SU(2) \times \SU(2))/\mu_2$.  Below we will see this explicitly on the Lie algebra level.

\section{Modular forms on $G_2$}\label{sec:MFs} In this section we define modular forms on $G_2$, after \cite{ganGrossSavin}.  We then discuss their ``abstract'' Fourier expansion in terms of cubic rings, which follows from a multiplicity one result of Wallach at the archimedean place and the arithmetic invariant theory of $\GL_2(\Z)$ acting on $Sym^3(\Z^2) \otimes \det^{-1}$ (the orbits parametrize cubic rings).  We then state the two main theorems that we will prove:
\begin{enumerate}
\item An explicit, or refined Fourier expansion.  This is a purely archimedean result.  It is the $G_2$-special case of the main result of \cite{pollackQDS}.
\item A Dirichlet series for the standard $L$-function of a modular form on $G_2$, away from finitely many bad primes.  This is really a $p$-adic result (for $p$ finite).  With a small amount of work, it can be extracted from the papers \cite{gurevichSegal} and \cite{segal}.  Namely, these papers give Rankin-Selberg integrals for cusp forms on $G_2$, and the Dirichlet series is essentially equivalent to the unramified computation in these two papers.
\end{enumerate}
We hope people will be interested in this beautiful Rankin-Selberg integral of Gurevich-Segal and Segal.  However, the computations in \emph{loc cit} are rather difficult (at least for this author!).  Thus we give a new presentation of these Rankin integrals, following the same outline as \cite{gurevichSegal,segal} but with a few technical simplifications.  We explain below the manner in which we accomplish these simplifications.

\subsection{Quaternionic discrete series} The definition of modular forms on $G_2$, from \cite{ganGrossSavin}, relies on that of the quaternionic discrete series representations \cite{grossWallach2}.  These are a family of certain discrete series representations $\pi_n$ of $G_2(\R)$, for integers $n \geq 2$, singled out by Gross and Wallach \cite{grossWallach1,grossWallach2}.  In this subsection, we briefly discuss these representations, and the definition of modular forms on $G_2$.

By way of analogy, first consider the group $\Sp_{2g}$, and the Siegel modular forms.  The group $\Sp_{2g}(\R)$ has holomorphic discrete series representations, and the automorphic representations of $\Sp_{2g}(\A)$ that have these as their infinite components give rise to the Siegel modular forms.  More precisely, the maximal compact subgroup of $\Sp_{2g}(\R)$ is the unitary group $U(g)$.  Consider the holomorphic discrete series representation $\sigma_n$ of $\Sp_{2g}(\R)$, whose minimal $K$-type is the one-dimensional representation $\det^{\otimes n}$ of $U(g)$.  The representations $\sigma_n$ on $\Sp_{2n}(\R)$ have the smallest Gelfand-Kirillov dimension of the discrete series on $\Sp_{2n}$ \cite{wallachGK}, and simplest minimal $K$-type.  These representations are responsible for the Siegel modular forms on $\Sp_{2g}$ of weight $n$.

The discrete series representations $\pi_n$ on $G_2(\R)$ are analogues of the representations $\sigma_n$; they too have the smallest Gelfand-Kirollov dimension among the discrete series on $G_2(\R)$ \cite{wallachGK}, and have the simplest minimal $K$-type.  More precisely, set $K \simeq (\SU(2) \times \SU(2))/\mu_2$ the maximal compact subgroup of $G_2(\R)$, that corresponds to the Cartan involution $\theta$ defined in subsection \ref{subsec:Cartan}.  From \cite{grossWallach2}, one has the $K$-type decomposition
\begin{equation}\label{eqn:Ktypes} \pi_n|_{K} = \bigoplus_{k \geq 0} Sym^{2n+k}(\C^2) \boxtimes Sym^{k}(W_{\C}).\end{equation}
Here $\C^2$ is the defining two-dimensional representation of the ``long root'' $\SU(2)$, which is the first $\SU(2)$-factor in $K = (\SU(2) \times \SU(2))/\mu_2$, and $W_{\C}$ is the four-dimensional irreducible representation of the ``short root'' $\SU(2)$, which is the second $\SU(2)$-factor in $K$.  (The summands in (\ref{eqn:Ktypes}) are not generally irreducible.) Thus the minimal $K$-type of $\pi_n$ is $\V_n:=Sym^{2n}(\C^2) \boxtimes \mathbf{1}$.  Note that $K$ has no nontrivial one-dimensional representations, and thus there are no discrete series of $G_2(\R)$ with one-dimensional minimal $K$-types.

Roughly, the modular forms on $G_2$ of weight $n$ correspond to certain automorphic forms in automorphic representations $\pi = \pi_f \otimes \pi_{\infty}$ with $\pi_{\infty} = \pi_n$.  More precisely, one defines\footnote{In the paper \cite{pollackQDS} we generalize slightly the definition of modular forms on $G_2$; for example, one can also define modular forms of weight $1$.} modular forms on $G_2$ as follows. Denote by $\mathcal{A}(G_2)$ the space of automorphic forms on $G_2(\A)$.  Then, following \cite{ganGrossSavin}, one sets
\[\mathrm{MF}(G_2)_n = \Hom_{G_2(\R)}(\pi_n,\mathcal{A}(G_2))\]
the space of modular forms of weight $n$ on $G_2$.  More concretely, associated to a homomorphism $\varphi \in \Hom_{G_2(\R)}(\pi_n,\mathcal{A}(G_2))$, one gets a ``vector-valued'' function $F_{\varphi}$ on $G_2(\A)$ by restricting $\varphi$ to the minimal $K$-type $\V_n$ of $\pi_n$.  That is, one defines $F_{\varphi}: G_2(\A) \rightarrow \V_n^\vee$ as $F_{\varphi}(g)(v) = \varphi(v)(g)$.  We will think about $\mathrm{MF}(G_2)_n$, the modular forms on $G_2$ of weight $n$, concretely through their associated vector-valued functions $F_{\varphi}$.

\subsection{Wallach's result and Fourier coefficients} Suppose $\varphi$ is a modular form of weight $n$ on $G_2$.  We now discuss the definition of the Fourier coefficients of $\varphi$, or equivalently the Fourier coefficients of $F_{\varphi}$.  We will consider characters of $N(\Q)\backslash N(\A)$, where recall $N$ denotes the unipotent radical of the Heisenberg parabolic.  Thus suppose $\chi: N(\Q)\backslash N(\A) \rightarrow \C^\times$ denotes a unitary character of $N$. (In particular, $\chi$ is trivial on $N_0(\A)$, because $N_0 = [N,N]$ is the commutator subgroup of $N$.) If $\xi$ is an automorphic form on $G_2$, then $\xi_{\chi}(g):=\int_{N(\Q)\backslash N(\A)}{\chi^{-1}(n)\xi(ng)\,dn}$ is a function $G_2(\A)\rightarrow \C$ satisfying $\xi(ng) = \chi(n)\xi(g)$ for all $n \in N(\A)$.  Similarly, we define $F_{\varphi,\chi}: G_2(\A) \rightarrow \V_n^\vee$ as
\[F_{\varphi,\chi}(g) = \int_{N(\Q)\backslash N(\A)}{\chi^{-1}(n)F_{\varphi}(ng)\,dn}.\]

Of course, for general automorphic forms $\xi$ on $G_2(\A)$, not much can be said about the function $\xi_{\chi}$ on $G_2$.  However, when $\varphi$ is a modular form, it turns out that the function $F_{\varphi,\chi}$ restricted to the real points of the Levi subgroup $M(\R) = \GL_2(\R)$ is a particular nice function, and that much arithmetical information about $\varphi$ can be extracted from $F_{\varphi,\chi}$.  Elaborating on these claims is what this paper is about.  

The first nontrivial fact in this direction is the following multiplicity one result of Wallach, which is a special case of the main result of \cite{wallach}.  In the following theorem, a character $\chi: N(\R) \rightarrow \C^\times$ is called generic if it is in the open orbit for action of $M(\C)$ on $N^{ab}(\C)^\vee$.
\begin{theorem}[Wallach] Suppose that $\chi: N(\R) \rightarrow \C^\times$ is a generic character.  Then 
\[\dim \Hom_{N(\R)}(\pi_n,\chi) \leq 1,\]
i.e., there is at most a one-dimensional space of $\chi$-equivariant moderate growth linear functionals on $\pi_n$.\end{theorem}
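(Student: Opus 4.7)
The plan is to reduce Wallach's multiplicity-one statement to a finite-dimensional linear algebra question on the minimal $K$-type of $\pi_n$.

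First, I would pass to the underlying $(\g,K)$-module $\pi_n^{K\text{-fin}}$, using that moderate-growth functionals on the smooth Casselman--Wallach globalization of $\pi_n$ are determined by their restrictions to the $K$-finite subspace. Since $\chi$ is a unitary character of $N(\R)$, it is automatically trivial on $N_0 = [N,N]$, so the condition becomes infinitesimal $\n^{ab}$-equivariance. Next I would show that $\pi_n^{K\text{-fin}}$ is generated as a $(\g,K)$-module by the minimal $K$-type $\V_n = \mathrm{Sym}^{2n}(\C^2) \boxtimes \mathbf{1}$. Using the $K$-type decomposition (\ref{eqn:Ktypes}), this follows from an explicit verification that the action of the $\p$-part of $\g$ (the $(-1)$-eigenspace of $\theta$) realizes each higher $K$-type as a $\p$-image of the adjacent lower one; the relevant branching can be read off from the Lie bracket relations after subsection~\ref{subsec:Cartan}.

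Consequently, any such functional $\ell$ is determined by its restriction $\ell_0 := \ell|_{\V_n} \in \V_n^*$. The problem thus becomes: for which $\ell_0$ does the formal extension of $\ell_0$ along the action of $\p$ descend to a well-defined $\chi$-equivariant linear functional on $\pi_n^{K\text{-fin}}$? Writing out $\ell(Xv) = d\chi(X)\ell(v)$ for $X \in \n$ and $v$ ranging over $\V_n$ together with its iterated $\p$-translates produces a linear system on $\ell_0$, whose solution space is the object we must bound. The key step is then to show that for $\chi$ in the open $M(\C)$-orbit on $(\n^{ab})^*$ this system has solution space of dimension at most one. Using that $\n^{ab} \simeq \mathrm{Sym}^3(V_2) \otimes \det^{-1}$ as an $M$-module together with the identification (\ref{gl2Ident}), one can write the constraint explicitly in terms of $d\chi$ viewed as a cubic form on $V_2$; genericity of $\chi$ corresponds precisely to this cubic form having distinct roots over $\C$, and one verifies that the resulting linear system cuts $\V_n^*$ down to a one-dimensional subspace.

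The main obstacle will be the dimension estimate in this reduced problem: it is not enough to show the solution space is finite-dimensional, one must obtain the exact bound of $1$. This is where the genericity hypothesis does its essential work, since for non-generic $\chi$ the stabilizer in $M$ grows and the multiplicity can jump. Technically, one must rule out higher-order obstructions coming from propagating the equivariance through $\p^{\otimes 2}, \p^{\otimes 3}, \dots$, and this is achieved by exploiting the relations among the higher $K$-type transition maps that are forced by the quaternionic structure of $\pi_n$. The moderate growth hypothesis on $\ell$ is needed at the outset to rule out purely distributional solutions to the equivariance equations that could otherwise inflate the multiplicity.
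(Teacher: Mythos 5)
The paper does not prove this theorem; it is quoted from \cite{wallach}. The closest thing in the paper is the proof of Theorem \ref{thm:FE} in section \ref{sec:genWhit}, which establishes the refined (explicit) version of this multiplicity-one statement, and comparing your sketch with that argument exposes a genuine gap at the central step. You claim that a $\chi$-equivariant functional $\ell$ is determined by $\ell_0 = \ell|_{\V_n} \in \V_n^*$ together with a \emph{finite-dimensional linear system} obtained by ``propagating the equivariance through $\p^{\otimes 2}, \p^{\otimes 3},\dots$''. This is not so: the equivariance hypothesis only pins down $\ell(Xv)$ for $X \in \mathfrak{n}$, namely $\ell(Xv) = d\chi(X)\ell(v)$. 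For $X$ in $\mathfrak{m}$ or in the opposite nilradical (equivalently, for the generic element of $\p$), the value $\ell(Xv)$ is a genuinely new unknown, so the system you write down does not close up on $\V_n^*$. The correct reduction, which is what both Wallach and section \ref{sec:genWhit} carry out, is to the $\V_n^\vee$-valued function $\W^\chi(g)(v) = \ell(gv)$, $v \in \V_n$: generation of $\pi_n$ by its minimal $K$-type shows $\ell \mapsto \W^\chi$ is injective, the equivariance $\W^\chi(ngk) = \chi(n)k^{-1}\W^\chi(g)$ and the Iwasawa decomposition $G = NMK$ reduce $\W^\chi$ to a smooth function on $M(\R)\simeq \GL_2(\R)$, and the constraint imposed by the structure of $\pi_n$ is the first-order \emph{differential} system $\mathcal{D}_n\W^\chi = 0$ coming from the absence of the $K$-type $Sym^{2n-1}(V_2)\boxtimes W$ in $\pi_n|_K$ (see (\ref{eqn:Ktypes})). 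The multiplicity bound is a statement about the moderate-growth solution space of this system of ODEs (Proposition \ref{difEqs1} and Corollary \ref{cor:Gkdifs}), not about the kernel of a linear map on $\V_n^*$.

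Two further points. First, your description of the role of moderate growth is off: it is not there to exclude ``distributional'' solutions, but to select the decaying Bessel solution $K_v$ over the exponentially growing solution of the same second-order equation $((w\partial_w)^2 - v^2)G_v = u^2 G_v$; without the growth condition the solution space of the differential system is strictly larger and the multiplicity-one statement fails. Second, genericity of $\chi$ (the cubic $p_\chi$ having nonzero discriminant, as you correctly say) enters not through a rank computation on a finite linear system but through the nonvanishing of $p_\chi(z)$ needed to separate variables and solve the recursions in $k$; the further dichotomy between the positive and negative real orbits (three real roots versus one) is what decides whether the one remaining solution actually has moderate growth, though since the statement only asserts $\dim \leq 1$ you do not need that here. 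To repair your argument you would need to replace the finite linear system on $\V_n^*$ by the Schmid differential system on $C^\infty(M(\R))\otimes\V_n^\vee$ and prove uniqueness of its moderate-growth solutions, which is exactly the content of section \ref{sec:genWhit}.
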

 
In fact, the single open orbit of $M(\C)$ on $N^{ab}(\C)^\vee$ breaks up into two orbits of $M(\R)$ on $N^{ab}(\R)^\vee$, a ``positive'' one and a ``negative'' one.  Wallach proves that $\Hom_{N(\R)}(\pi_n,\chi) = 0$ if $\chi$ is in the negative orbit, $\Hom_{N(\R)}(\pi_n,\chi)$ is one-dimensional if $\chi$ is in the positive orbit, and furthermore determines $\Hom_{N(\R)}(\pi_n,\chi)$ as a representation of the stabilizer of $\chi$ in $M$.

From now on we denote $W = N^{ab}$, so that\footnote{The Levi subgroup $M$ of $P$ is conjugate in $G_2(\C)$ to the short root $\SU(2)$, and through this conjugation, $W \otimes \C$ becomes identified with the $W_{\C}$ appearing in the $K$-type decomposition of $\pi_n$.  This is why we use the same letter for both spaces.} $W = Sym^3(V_2) \otimes \det^{-1}$ as a representation of $M \simeq \GL_2$.  Using the above results of Wallach, Gan-Gross-Savin \cite{ganGrossSavin} defined a notion of Fourier coefficient for modular forms $\varphi$ on $G_2$, as follows.

Fix $\chi_{0}: N(\R) \rightarrow \C^\times$ a non-degenerate character in the ``positive'' orbit of $M(\R)$, and a basis $\ell_0$ of $\Hom_{N(\R)}(\pi_n,\chi_0)$.  Suppose that $\chi: N(\Q) \backslash N(\A) \rightarrow \C^\times$ is a non-degenerate character, and that $\chi|_{N(\R)}$ is in the positive orbit.  Then using $\chi_0$ and Wallach's result on the stabilizer, one can make a canonical basis element $\ell_{\chi} \in \Hom_{N(\R)}(\pi_n,\chi)$ (that depends on $\chi_0$.)

Now, suppose $\varphi$ is a modular form on $G_2$ of weight $n$.  Then integration
\[v \in \pi_n \mapsto \int_{N(\Q)\backslash N(\A)}{\chi^{-1}(n) \varphi(v)(n)\,dn}\]
defines an element $I_{\varphi,\chi}$ of $\Hom_{N(\R)}(\pi_n,\chi)$.  Thus, there is $a_{\varphi}(\chi) \in \C$ such that $I_{\varphi,\chi} = a_{\varphi}(\chi) \ell_{\chi}$ as elements of $\Hom_{N(\R)}(\pi_n,\chi)$.
\begin{definition}[Gan-Gross-Savin] Suppose $\varphi$ is a modular form on $G_2$ of weight $n$, and $\chi_0, \ell_0$ have been fixed as above.  Then the complex number $a_\varphi(\chi)$ is called the $\chi$-Fourier coefficient of $\varphi$.  If $\chi|_{N(\R)}$ is in the negative orbit, then the $\chi$-Fourier coefficient of $\varphi$ is $0$. \end{definition}

It is a consequence of the first main result that we will prove that the definition of Fourier coefficients of $\varphi$ can also be extended to the characters $\chi$ on $N(\Q) \backslash N(\A)$ that are not-necessarily generic, so long as they are not the trivial character.  Cusp forms, however, only have generic Fourier coefficients.

\subsection{Cubic rings and Fourier coefficients}\label{subsec:cubicRingsFCs} We have now briefly explained the analytic result of Wallach that goes into defining the Fourier coefficients of modular forms on $G_2$.  Using the arithmetic invariant theory of cubic rings, which parametrize the orbits of $\GL_2(\Z)$ on $W(\Z) = Sym^3(\Z^2) \otimes \det^{-1}$, Gan-Gross-Savin refined and enriched the above notion of Fourier coefficients of modular forms.

To understand this, first fix the standard additive character $\psi: \Q\backslash \A \rightarrow \C^\times$.  With $\psi$ fixed, characters $\chi$ on $N(\Q)\backslash N(\A)$ correspond to elements $w \in W(\Q)$.  Namely, $W$ has a symplectic form $\langle \;,\;\rangle$ satisfying $\langle g w_1, g w_2 \rangle =\det(g) \langle w_1, w_2 \rangle$ for all $w_1, w_2 \in W$ and $g \in \GL_2$.  Then, associated to $w \in W(\Q)$ one defines $\chi_w: N(\Q)\backslash N(\A) \rightarrow \C^\times$ as $\chi_w(n) = \psi(\langle w, \overline{n} \rangle)$, where $\overline{n}$ is the image of $n$ in $N^{ab} \simeq W$.  For $w \in W(\Q)$ generic and $\varphi$ a modular form, we set $a_{\varphi}(w) := a_{\varphi}(\chi_w)$.

Now, suppose that $\varphi$ is a modular form of weight $n$ and \emph{level one} for $G_2$.  That $\varphi$ is level one means that $F_{\varphi}(gk) = F_{\varphi}(g)$ for all $k \in G_2(\widehat{\Z})$.  It then follows easily that $a_{\varphi}(w) \neq 0$ implies $w \in W(\Z)$.  Furthermore, if $\varphi$ is of weight $n$, then $a_\varphi(k \cdot w) = \det(k)^{n} a_\varphi(w)$ for all $k \in \GL_2(\Z)$ \cite[pg 125]{ganGrossSavin}.  Consequently, if the weight $n$ of $\varphi$ is even, the Fourier coefficients of $\varphi$ are invariants of the orbits of $\GL_2(\Z)$ acting on $W(\Z)$.

As mentioned, the orbits of $\GL_2(\Z)$ on $W(\Z)$ parametrize cubic rings over $\Z$.  A cubic ring $T$ over $\Z$ is a commutative ring that is isomorphic to $\Z^3$ as a $\Z$-module.  The parametrization is as follows \cite{deloneFaddeev,ganGrossSavin,grossLucianovic}.  An element of $W(\Z)$ is binary cubic form $f(x,y) = ax^3 + bx^2y + cxy^2 + dy^3$, with $a,b,c,d \in \Z$.  Associated to $f$, one sets $T = \Z \oplus \Z \omega \oplus \Z \theta$, with multiplication table
\begin{itemize}
\item $\omega \theta = -ad$
\item $\omega^2 = -ac + a\theta - b\omega$
\item $\theta^2 = -bd + c\theta - d \omega$.
\end{itemize}
The basis $1, \omega,\theta$ of $T$ has the special property that $\omega \theta \in \Z$.  One calls bases $1,\omega,\theta$ of cubic rings $T$ with this property \emph{good bases}.  It is easily seen that every cubic ring has a good basis.  The above association $f(x,y) \mapsto (T,\{1,\omega,\theta\})$ induces a bijection between binary cubic forms and isomorphism classes of cubic rings with a good basis.  There is an action of $\GL_2(\Z)$ on the set of good bases of a fixed cubic ring $T$, and the association $f(x,y) \mapsto (T,\{1,\omega,\theta\})$ is equivariant for the action of $\GL_2(\Z)$ on both sides.  It induces a bijection between orbits of $\GL_2(\Z)$ on $W(\Z)$ and isomorphism classes of cubic rings.

One can also see \cite[Section 2.3]{pollackLL} for a discussion of the parametrization of cubic rings, from the point of view of ``lifting laws''.  The lifting law will appear when we consider the Rankin-Selberg integral in section \ref{sec:RS}.

As a consequence of the above discussion, one can parametrize the Fourier coefficients of a modular form on $G_2$ in terms of cubic rings.
\begin{definition}[Gan-Gross-Savin] Suppose that $\varphi$ is a modular form on $G_2$ of level one and even weight, and that $T$ is a non-degenerate cubic ring over $\Z$, i.e., assume $T \otimes_{\Z} \Q$ is an \'{e}tale $\Q$-algebra.  Define $a_{\varphi}(T) := a_{\varphi}(w)$, where $w \in W(\Z)$ is any element of the $\GL_2(\Z)$-orbit on $W(\Z)$ that corresponds to the ring $T$.\end{definition}
Over $\R$, there are two types of non-degenerate cubic rings: $\R \times \R \times \R$ and $\R \times \C$.  The first type corresponds to the ``positive'' orbit of $\GL_2(\R)$ on $W(\R)$, and the second to the ``negative'' orbit.  Thus, if $T$ is a cubic ring and $T \otimes \R = \R \times \C$, then $a_{\varphi}(T) = 0$.

\subsection{The theorems} We now state the two main results that we will prove below.  

\subsubsection{The Fourier expansion} The first theorem is the analytic description of the Fourier expansion of modular forms of weight $n$ on $G_2$.  To setup the result, if $F=F_{\varphi}: G_2(\A) \rightarrow \V_n^\vee$ is associated to a modular form $\varphi$ on $G_2$ of weight $n$, set $F_0(g) = \int_{N_0(\Q)\backslash N_0(\A)}{F(ng)\,dn}$.  Then we can Fourier expand $F_0$ along $W = N/N_0$.

To do this, first recall that the representation $W$ of $\GL_2$ has an invariant symplectic form $\langle \;,\; \rangle$, mentioned above.  If $w \in W$ corresponds to the binary cubic form $f(x,y) = ax^3 + bx^2y + cxy^2 + dy^3$, and $w' \in W$ corresponds to the binary cubic $f'(x,y) = a'x^3 + b'x^2y + c'xy^2 + d'y^3$, then $\langle w,w' \rangle = ad'- \frac{1}{3}bc' + \frac{1}{3}cb' - da'.$  Instead of writing out binary cubic forms, from now on we write elements of $W$ as four-tuples $w= (a,\frac{b}{3},\frac{c}{3},d)$; this element corresponds to the binary cubic $ax^3 + bx^2y + cxy^2 + dy^3$.

Associated to an element $w \in W(\R)$, we have the polynomial $h_w(z) = az^3 + bz^2 + cz +d$ on $\C$.  We write $w \geq 0$ if $h_w(z)$ has no roots on the upper (equivalently) lower half-plane $\mathcal{H}_1^{\pm}$. This notion plays an important role in the statement of Theorem \ref{thm:FE}, i.e., in the Fourier expansion of modular forms on $G_2$.

Here are some facts, examples and non-examples:
\begin{enumerate} 
\item Suppose $w = \frac{1}{3}(0,1,-1,0)$.  Then $h_w(z) = z^2-z=z(z-1)$, which has only real roots.  Thus $w \geq 0$.  
\item Suppose $w = (1,0,0,d)$.  Then $h_w(z) = x^3 + d$.  Hence if $d \neq 0$, $w$ is not $\geq 0$, but if $d = 0$, then $w \geq 0$.
\item The condition $w \geq 0$ is equivalent to the binary cubic $f_w(x,y) = ax^3 + bx^2y + cxy^2 + dy^3$ factoring into three linear factors over $\R$, and thus is an invariant of the orbit $\GL_2(\R)w$.  
\item If $w$ is in the open orbit for $\GL_2(\C)$ on $W(\C)$, so that associated to $w$ is a cubic etale $\Q$-algebra $E_w$, then $w \geq 0$ if and only if $E_w \otimes \R = \R \times \R \times \R$.
\item Note that \emph{the degree} of the polynomial $h_w$ can change in the $\GL_2(\R)$-orbit, even though the condition $w \geq 0$ is invariant.  The reason we define the condition $w \geq 0$ in terms of the single variable polynomial $h_w$ not having $0$'s, as opposed to the binary cubic $f_w$ factoring, is that the condition $h_w$ not having $0$'s on $\mathcal{H}_1^{\pm}$ generalizes nicely to the larger quaternionic exceptional groups.
\end{enumerate}

Just like the Fourier expansion of Siegel modular forms on $\Sp_{2n}(\R)$ involves the special functions $e^{2\pi i \tr(TZ)}$ on the Siegel upper half-space, the Fourier expansion of modular forms on $G_2$ involves some special functions.  In preparation for the statement of Theorem \ref{thm:FE}, we now define these functions.

Fix a positive integer $n$.  For $w \in W(\R)$ with $w \geq 0$, and an integer $v$ with $-n \leq v \leq n$, define the function $\mathcal{W}_w^v: \GL_2(\R) \rightarrow \C$ as
\begin{equation}\label{eqn:Wwv}\mathcal{W}_w^v(m) = \left(\frac{ |j(m,i) h_w(z)|}{j(m,i)h_w(z)}\right)^{v} \det(m)^{n} |\det(m)| K_v(|j(m,i)h_w(z)|).\end{equation}
Here, $K_v$ denotes the $v^{th}$ K-Bessel function.  Furthermore, for $m \in \GL_2(\R)$, $z = m \cdot i$ in $\mathcal{H}_1^{\pm}$ and $j(m,i) = (ci+d)^3 \det(m)^{-1}$ if $m = \mm{a}{b}{c}{d}$.  Note that it is important that $h_w(z)$ has no zeros on $\mathcal{H}_1$ for this function to be defined, because the Bessel function $K_v$ is not defined at $0$.

Now, fix the standard basis $x, y$ of weight vectors of the defining representation of the long-root $\SU(2)$, so that $\{x^{n+v}y^{n-v}\}_{-n \leq v \leq n}$ are a basis of $\V_n = Sym^{2n}(V_2) \boxtimes \mathbf{1}$.  Define $\mathcal{W}_w: \GL_2(\R) \rightarrow \V_n^\vee$ as
\[\mathcal{W}_w(m) = \sum_{-n \leq v \leq n}{\mathcal{W}_w^{v} \frac{x^{n+v}y^{n-v}}{(n+v)!(n-v)!}}.\]
The function $\mathcal{W}_w$ is the special function that controls the Fourier expansion of modular forms on $G_2$, as stated in Theorem \ref{thm:FE}.

Finally, denote $n: W \rightarrow N/N_0$ the identification of the $\Q$-vector space $W$ with the unipotent group $N/N_0$.  With this preparation, we have the following result, from \cite{pollackQDS}:
\begin{theorem}\label{thm:FE} Suppose $F = F_{\varphi}$ corresponds to a modular form $\varphi$ on $G_2$ of weight $n$.  Denote by $F_{00}$ the constant term of $F$ along $N$.  Then there are complex numbers $a_{F}(w)$ for $w \in 2\pi W(\Q)$, such that for all $x \in W(\R)$ and $m \in M(\R) \simeq \GL_2(\R)$ one has
\[F_0(n(x)m) = F_{00}(m) + \sum_{w \in 2\pi W(\Q), w \geq 0}{a_{F}(w) e^{-i \langle w, x \rangle}\mathcal{W}_w(m)}.\]
Furthermore, the constant term $F_{00}$ is of the form
\[F_{00}(m) = \Phi(m) x^{2n} + \beta x^n y^n + \Phi(m w_0) y^{2n}\]
with $\beta \in \C$, $w_0 = \mm{-1}{}{}{1} \in \GL_2$, and $\Phi$ associated to a holomorphic modular form of weight $3n$ on $\GL_2$.
\end{theorem}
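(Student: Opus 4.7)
The plan is to first expand $F_0$ as an abelian Fourier series over $W(\Q)$, then use Wallach's multiplicity one theorem together with the differential equations characterizing the minimal $K$-type of $\pi_n$ to pin down each Fourier coefficient. Concretely, $F_\varphi$ is left $G_2(\Q)$-invariant and $F_0$ is by construction left $N_0(\A)$-invariant, so the function $x \mapsto F_0(n(x)m)$ descends to $W(\Q)\backslash W(\R)$ and admits an abelian Fourier decomposition
\begin{equation*}
F_0(n(x)m) = \sum_{w \in 2\pi W(\Q)} a_F(w)\,\mathcal{F}_w(m)\, e^{-i\langle w,x\rangle},
\end{equation*}
where each $\chi_w$-isotypic component is written as a scalar $a_F(w)$ times a normalized archimedean profile $\mathcal{F}_w : M(\R) \to \V_n^\vee$. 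The task is then to identify $\mathcal{F}_w = \mathcal{W}_w$ for generic $w \geq 0$, to show that all other nonzero $w$ contribute nothing, and to work out the shape of the $w = 0$ term.

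For generic $w$, the assembled function $n(x)m \mapsto \mathcal{F}_w(m)\,e^{-i\langle w,x\rangle}$ defines an element of $\Hom_{N(\R)}(\pi_n,\chi_w)$. By Wallach's theorem this space is at most one-dimensional, and by Wallach's sharpening it vanishes for $w$ in the negative orbit. Using item (4) in the list preceding Theorem \ref{thm:FE}, the positive orbit corresponds to $\{w \geq 0\}$, so $\mathcal{F}_w = 0$ whenever $w$ is generic and $w \not\geq 0$. To produce the spanning element for generic $w \geq 0$, I would use that being of minimal $K$-type in $\pi_n$ is equivalent to $F_\varphi$ lying in the kernel of a first-order $K$-equivariant Schmid-type operator $D$. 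Writing
\begin{equation*}
\mathcal{F}_w(m) = \sum_{-n \leq v \leq n} c_{v,w}(m)\,\frac{x^{n+v}y^{n-v}}{(n+v)!(n-v)!}
\end{equation*}
and imposing $D\mathcal{F}_w = 0$ gives a coupled first-order system linking neighboring $v$'s together with a second-order radial ODE on $A(\R) \subset \GL_2(\R)$ in Iwasawa form. After the substitution $t = |j(m,i)h_w(z)|$, that radial ODE is the modified Bessel equation of order $v$, whose unique moderate-growth solution is $K_v(t)$. Combining this with the $O(2)$-equivariance of $\mathcal{F}_w$, which supplies the phase $(j(m,i)h_w(z)/|j(m,i)h_w(z)|)^v$, and the central/conformal normalization coming from $\pi_n$, which supplies the $\det(m)^n|\det(m)|$ prefactor, recovers exactly the formula for $\mathcal{W}_w^v$ in (\ref{eqn:Wwv}). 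I expect the explicit derivation of this ODE system and the bookkeeping of its normalizations to be the main obstacle.

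Two loose ends remain. For non-generic $w \neq 0$, the $\chi_w$-equivariance together with $D\mathcal{F}_w = 0$ yields a degenerate Bessel-type ODE whose only moderate-growth solution is $0$; since Wallach's theorem only handles generic $\chi_w$, this requires a separate hands-on Lie-algebra argument and is the other potential obstruction. For $w = 0$, expanding $F_{00}(m) = \sum_v \Phi_v(m)\,x^{n+v}y^{n-v}/((n+v)!(n-v)!)$, the Bessel ODE degenerates entirely: the $D$-equation on the extreme weights $v = \pm n$ becomes a first-order holomorphy equation, identifying $\Phi_{\pm n}$ with holomorphic modular forms of weight $3n$ on $\GL_2$ after restriction to the upper/lower half-plane $\mathcal{H}_1^\pm$ (the shift $2n \mapsto 3n$ arises from combining the $\V_n$-weight with the $\det(m)^n|\det(m)|$ factor already present in $\mathcal{W}_w^v$). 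For $-n < v < n$ the equations force $\Phi_v \equiv 0$ except at $v = 0$, where they force $\Phi_0$ to be constant, yielding the scalar $\beta$. The appearance of $\Phi(m w_0)$ in the $y^{2n}$-coefficient follows from a $\GL_2$-Weyl symmetry relating the two extreme weight vectors in $\V_n^\vee$.
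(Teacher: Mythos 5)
Your overall strategy for the generic terms is the paper's: expand $F_0$ along $W(\Q)\backslash W(\A)$, use annihilation by the Schmid operator $\mathcal{D}_n$ to get a coupled first-order system in the Iwasawa coordinates of $M(\R)$, extract a second-order radial equation that becomes the modified Bessel equation, and select $K_v$ by moderate growth; the phase and the $\det(m)^n|\det(m)|$ prefactor come out of the remaining first-order relations. (One stylistic difference: the paper does not invoke Wallach's theorem in the proof at all — the one-dimensionality of the solution space and the vanishing for $w\not\geq 0$ both fall out of the explicit ODE analysis, the latter because the candidate solution $\left(|p_\chi(z)|/p_\chi(z)\right)^{v}w^{2n+2}K_v(|p_\chi(z)|y^{-3/2}w)$ blows up as $p_\chi(z)\to 0$, forcing $p_\chi$ to have no zeros on $\mathcal{H}_1^{\pm}$.)

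There is, however, a genuine error in your treatment of the non-generic nonzero characters. You assert that for non-generic $w\neq 0$ the equations degenerate and force $\mathcal{F}_w=0$. This is false, and it contradicts the statement being proved: the sum in Theorem \ref{thm:FE} runs over \emph{all} $w\geq 0$, and the condition $w\geq 0$ does not exclude degenerate $w$. Indeed, a nonzero real binary cubic with a repeated root must split into real linear factors (a repeated complex root together with its conjugate would give too many roots), so \emph{every} nonzero degenerate $w$ satisfies $w\geq 0$; e.g.\ $w=(1,0,0,0)$ has $h_w(z)=z^3$, which is nonvanishing on $\mathcal{H}_1^{\pm}$, and $\mathcal{W}_w$ is a perfectly good nonzero moderate-growth solution. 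The differential-difference system of Proposition \ref{difEqs1} is uniform in $\chi$ and does not "degenerate" for non-generic $\omega$: the same computation gives $G_v=K_v(|p_\chi(z)|y^{-3/2}w)\,Y_v(z,z^*)$ with $Y_0$ constant, and the only obstruction to a nonzero solution is a zero of $p_\chi$ on $\mathcal{H}_1^{\pm}$. This is precisely why the paper emphasizes that the Fourier coefficients of modular forms extend to all nontrivial, not-necessarily-generic characters (only cusp forms have purely generic coefficients). Your argument, if carried out as described, would produce a sum over generic $w\geq 0$ only, which is a strictly weaker (and for non-cuspidal $\varphi$, incorrect) expansion. The constant-term analysis you sketch is consistent with the paper, which leaves it as the $\omega=0$ specialization of the same system.
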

Theorem \ref{thm:FE} gives the explicit Fourier expansion of modular forms $\varphi$ on $G_2$ of weight $n$.  This result is the $G_2$-special case of the main result of \cite{pollackQDS}.  Note that the sum over $w$ in the theorem is only over those $w$ with $w \geq 0$, just like, for example, the fact that Siegel modular forms on $\Sp_{2n}(\R)$ only have Fourier coefficients corresponding to positive semi-definite matrices $T$.

\subsubsection{The Dirichlet series} The second main result we discuss is a Dirichlet series for the standard $L$-function of cuspidal representations $\pi$ associated to modular forms on $G_2$.  This Dirichlet series follows from \cite{gurevichSegal} and \cite{segal} without too much work; it is more-or-less equivalent to the unramified computation in these two papers.

\begin{theorem}\label{thm:DS} Suppose $\pi= \pi_f \otimes \pi_k$ is a cuspidal automorphic representation of $G_2(\A)$, unramified at all finite places, and $\varphi$ is the level one modular form of weight $k$ on $G_2$ associated to $\pi$.  Suppose that $k$ is even.  Furthermore, suppose that $E$ is a cubic \'etale $\Q$-algebra with $E\otimes \R \simeq \R \times \R \times \R$, and $\mathcal{O}_E$ is the maximal order in $E$.  Finally, denote by $S$ the set of places of $\Q$ consisting of $2,3$ and all the primes that ramify in $E$. Then
\begin{equation}\label{eqn:dirSer} \sum_{T \subseteq \mathcal{O}_{E}, n \geq 1, \text{ both prime to $S$}}{\frac{a_{\varphi}(\Z + n T)}{[\mathcal{O}_E: T]^{s-k+1} n^{s}}} = a_{\varphi}(\mathcal{O}_E) \frac{L^{S}(\pi,Std,s-2k+1)}{L^{S}(E,s-2k+2)\zeta_{\Q}^{S}(2s-4k+2)}.\end{equation}
Here $L(E,s) = \zeta_{E}(s)/\zeta_{\Q}(s)$ is the ratio of the Dedekind zeta functions of $E$ and $\Q$; that $T$ and $n$ are both prime to $S$ means that $[\mathcal{O}_E:T]$ and $n$ are not divisible by primes in $S$; and the superscript $S$ on the $L$-functions on the right-hand side of (\ref{eqn:dirSer}) means that the local factor at primes in $S$ have been removed.\end{theorem}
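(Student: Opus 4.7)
The plan is to extract \eqref{eqn:dirSer} from the Rankin-Selberg integral of Gurevich-Segal and Segal by combining global unfolding with the unramified local computation performed in \cite{gurevichSegal,segal}.

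First, I would recall the global integral: a cusp form $\varphi$ is paired against an Eisenstein series $E(g,s)$ on $G_2$ built from a section induced off the Heisenberg parabolic $P = MN$. The key feature, in contrast with Ginzburg's earlier integral, is that unfolding this integral picks up the \emph{non}-Whittaker Fourier coefficients $a_\varphi(w)$ along $N$ rather than a Whittaker coefficient; for a level-one, even-weight modular form these coefficients depend only on the $\GL_2(\Z)$-orbit of $w$ and are therefore indexed by cubic rings. Unfolding yields a sum over $\GL_2(\Q)$-orbits on $W(\Q)$; to obtain an identity attached to a single cubic \'etale $\Q$-algebra $E$, one restricts attention to the orbit corresponding to $E$.

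Second, I would set up the factorization. Under the level-one hypothesis and the standard spherical choice of inducing section away from $S$, the contribution of the $E$-orbit becomes, at each unramified prime $p \notin S$, a local sum over $\GL_2(\Z_p)$-orbits of cubic $\Z_p$-orders in $E_p := E \otimes \Q_p$. The Delone-Faddeev parametrization (Section \ref{subsec:cubicRingsFCs}) globally identifies the contributing orbits with pairs $(n, T)$, $n \geq 1$ and $T \subseteq \mathcal{O}_E$ both prime to $S$, via $(n,T) \mapsto \Z + nT$. The weights $n^{-s}$ and $[\mathcal{O}_E:T]^{-(s-k+1)}$ appearing in \eqref{eqn:dirSer} come respectively from the inducing character on $P$ and from the local Jacobian for the change of variables in the Heisenberg unipotent integration (together with the $\det$-factor seen in \eqref{eqn:Wwv}); the coefficient $a_\varphi(\Z + nT)$ is exactly the Gan-Gross-Savin Fourier coefficient attached to this orbit.

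Third, I would invoke the unramified computation of \cite{gurevichSegal, segal}: for the orbit corresponding to $E$, the product over $p \notin S$ of the local zeta integrals equals
\[
\frac{L^S(\pi,\mathrm{Std},s-2k+1)}{L^S(E,s-2k+2)\,\zeta_\Q^S(2s-4k+2)},
\]
and the constant $a_\varphi(\mathcal{O}_E)$ on the right of \eqref{eqn:dirSer} absorbs the archimedean factor together with the local integrals at the places in $S$, which are independent of $s$ under the standard normalizations. The main obstacle is this unramified local computation: one must show that the explicit $p$-adic sum over cubic orders, weighted by conductor and index, matches the Satake parameters of $\pi_p$ entering the degree-$7$ standard $L$-function. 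This is achieved in two steps: first, the auxiliary generating series $\sum_{T_p \subseteq \mathcal{O}_{E_p}} [\mathcal{O}_{E_p}:T_p]^{-s}$ is identified with a ratio involving local factors of $\zeta_E$ and $\zeta_\Q$; second, a Hecke algebra manipulation converts the remaining sum over Heisenberg-parabolic double cosets into the Euler factor of $L_p(\pi,\mathrm{Std},\cdot)$, which is precisely where the intricate calculations of \cite{gurevichSegal,segal} are carried out and where the promised streamlining of Section \ref{sec:RS} would be deployed.
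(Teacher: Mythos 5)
There is a genuine gap in your setup of the global integral. You describe the Eisenstein series as living on $G_2$ itself, induced off the Heisenberg parabolic $P=MN$ of $G_2$, and you propose to "restrict attention to the orbit corresponding to $E$" after unfolding. Neither step works. If $E(g,\Phi,s)$ is a degenerate Eisenstein series on $G_2$ attached to its own Heisenberg parabolic, then $\int_{G_2(\Q)\backslash G_2(\A)}\varphi(g)E(g,\Phi,s)\,dg$ unfolds to an integral of the constant term of $\varphi$ along $P$, which vanishes because $\varphi$ is cuspidal; and in any unfolding one must account for \emph{all} double cosets, not select one. The construction that actually works (and the reason a cubic algebra $E$ enters at all) is to build from $E$ an auxiliary group $G_E$ of type $D_4$ containing $G_2$, put the degenerate Heisenberg Eisenstein series on $G_E$, and pull it back to $G_2$. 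The unfolding is then over $P_E(F)\backslash G_E(F)/G_2(F)$: the double coset of the rank-one lift $\widetilde{v_E}$ of the binary cubic form of $\mathcal{O}_E$ produces the $\chi_E$-Fourier coefficient of $\varphi$ (this is Theorem \ref{thm:RSunfold}), and all other cosets die by cuspidality. Relatedly, $a_\varphi(\mathcal{O}_E)$ on the right of \eqref{eqn:dirSer} is not there to "absorb the archimedean factor and the bad places": Theorem \ref{thm:DS} is a purely finite-prime statement, namely the Euler product over $p\notin S$ of the local identity of Theorem \ref{thm:unram} applied to the global $\chi_E$-Fourier functional, and $a_\varphi(\mathcal{O}_E)$ is the value $L(v_0)$ of that functional on the spherical vector.

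The second half of your outline is closer to the mark but elides the one point that makes the method viable. Because the $\chi_E$-equivariant functional on $\pi_p$ is \emph{not} unique, the local integral cannot be evaluated by appealing to a known model; one must prove the identity \eqref{eqn:locUR} for an \emph{arbitrary} functional $L$ with $L(nv)=\chi(n)L(v)$. This is done by the "non-unique model" technique: one introduces an approximate basic function $\Delta(t,h)|t|^{s}$ whose action on $L(gv_0)$ produces $L(\pi_p,Std,s)$ up to an explicit Hecke correction (Proposition \ref{prop:approxBasic}, resting on the Gan--Gross--Savin computation of Hecke operators on Fourier coefficients), and then matches its $\chi$-Fourier coefficient against the Iwasawa-expanded local integral, which is a sum over cubic subrings of $\mathcal{O}_{E_p}$ weighted by content. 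The matching is the combinatorial "cubic ring identity" (Proposition \ref{prop:CRident}), which tracks how the content $c(T)$ changes under the $\GL_2$ Hecke operator $T(p)$ according to the factorization type of the binary cubic mod $p$. Your proposed identification of $\sum_{T_p}[\mathcal{O}_{E_p}:T_p]^{-s}$ with a ratio of $\zeta_{E}$ and $\zeta_\Q$ factors is not a separate first step of the argument; the denominator $L(E,s)\zeta(2s-2)$ emerges only from this Hecke-vs-content comparison.
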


Note that if $T$ is a cubic ring, then $\Z + nT$ is again a cubic ring, so the expression on the left-hand side of (\ref{eqn:dirSer}) makes sense.  Also, it would of course be desirable to improve this result so that the places in $S$ are accounted for. 

It is a fact (see \cite[Corollary 4.2]{pollackSMF}) that the Dirichlet series for the Spin $L$-function of Siegel modular forms on $\PGSp_6$ has a remarkably similar expression. In the case of Siegel modular forms on $\GSp_6$, the Fourier coefficients turn out to be parametrized by orders in quaternion algebras that are ramified at infinity (see, e.g., \cite{grossLucianovic}), and the Dirichlet series for the Spin $L$-function turns out to be a very similar sum as appears in the left-hand side of \eqref{eqn:dirSer}.

\section{The generalized Whittaker function}\label{sec:genWhit} In this section we sketch  the proof of Theorem \ref{thm:FE}.  This section is completely independent of section \ref{sec:RS}; thus the reader interested in the contents of that section may now skip there.  

Recall that $\varphi: \pi_n \rightarrow \mathcal{A}(G_2(\A))$ is a $G_2(\R)$-equivariant homomorphism, and 
\[F = F_{\varphi}: G_2(\Q)\backslash G_2(\A) \rightarrow \V_n^\vee\]
is defined as $F_{\varphi}(g)(v) = \varphi(v)(g)$.  Because $F$ is defined through the minimal $K$-type $\V_n$ of $\pi_n$, it is annihilated by a certain first-order linear differential operator $\mathcal{D}_n$.  We will shortly describe this operator, which is commonly referred to as the Schmid operator.

Suppose $\chi: N(\R) \rightarrow \C^\times$ is a character.  The Fourier expansion of $F$ along $N/N_0$ is controlled by functions $\W^\chi$ satisfying $\W^\chi(ng) = \chi(n)\W^\chi(g)$ for all $n \in N(\R)$ and $\W^\chi(gk) = k^{-1} \cdot \W^\chi(g)$ for all $k \in K \subseteq G_2(\R)$ the maximal compact.  In order to prove Theorem \ref{thm:FE}, we find all these functions $\W^\chi$ that are annihilated by $\mathcal{D}_n$ and have moderate growth.  It turns out that so long as $\chi$ is nontrivial, there is at most a one-dimensional space of such functions, and that the differential equations $\mathcal{D}_n \W^\chi = 0$ allow one to solve for this function explicitly.  The outcome of this calculation immediately results in Theorem \ref{thm:FE}.

Let us now describe the differential operator $\mathcal{D}_n$ more precisely.  Recall the Cartan involution $\theta$, and let $\g = \k \oplus \p$ denote the decomposition into $\theta=1$ and $\theta =-1$ parts.  Suppose $\{X_i\}_i$ is a basis of $\p$, and $\{X_i^*\}_i$ the dual basis of $\p^\vee$.  Set
\[\widetilde{D}F(g) = \sum_{i}{X_iF(g) \otimes X_i^*},\]
a function $G_2(\A) \rightarrow \V_n^\vee \otimes \p^\vee$.  It is independent of the choice of basis $X_i$.  

Now, as a representation of $\SU(2) \times \SU(2)$, $\p \simeq V_2 \boxtimes W$, where $W$ is the symmetric cube of the defining two-dimensional representation of the short-root $\SU(2)$.  Thus, $\V_n^\vee \otimes \p^\vee$ surjects onto $Sym^{2n-1}(V_2)^\vee \boxtimes W^\vee$.  The composition of $\widetilde{D}$ with this projection is the operator $\mathcal{D}_n$.  It induces a map
\[\left(C^\infty(G(\R))\otimes Sym^{2n}(V_2)^\vee \boxtimes \mathbf{1}\right)^{K} \rightarrow \left(C^\infty(G(\R))\otimes Sym^{2n-1}(V_2)^\vee \boxtimes W^\vee\right)^{K}.\]
Because the $K$-type $Sym^{2n-1}(V_2) \boxtimes W$ does not appear in $\pi_n|_K$, see (\ref{eqn:Ktypes}), one sees that $\mathcal{D}_n F_{\varphi} = 0$.  A good bit of the work to prove Theorem \ref{thm:FE} will be to make explicit the equations $\mathcal{D}_n \W^\chi = 0$.  See \cite{kosekiOda}, \cite{yamashita1,yamashita2}, and \cite{yoshinagaYamashita,yoshinagaYamashitaPJAS,yoshinaga} for some related work.

\subsection{The Cartan decomposition} In this subsection, we make some explicit computations with the Cartan decomposition $\g = \k \oplus \p$.  As mentioned, $\k \simeq \sl_2 \oplus \sl_2$, and $\p \simeq V_2 \boxtimes W$ as a representation of $\k$.  The purpose of this subsection is to record these isomorphisms explicitly.

First, observe that $\theta(v_j) = \delta_j$, and $\theta(E_{i,j}) = - E_{j,i}$.  Thus $\p = \g^{\theta = -1}$ is spanned by the Cartan $\h$, $E_{i,j} + E_{j,i}$ for $i \neq j$, and $v_j-\delta_j$.  One finds $\k = \g^{\theta = 1}$ is spanned by $v_j+\delta_j$ and $E_{i,j} - E_{j,i}$.  To make explicit the equation $\mathcal{D}_n \W^\chi =  0$, we will need bases of $\k$ and $\p$ that make clear the $\k$-action.

\subsubsection{Basis of $\mathfrak{k}$} To do this, for $j \in \{1,2,3\}$, and indices taken modulo $3$, set
\begin{align*} u_j &= \frac{1}{4}\left(E_{j+2,j+1}-E_{j+1,j+2} + v_j + \delta_j\right) \\ r_j &= \frac{1}{4}\left(3E_{j+2,j+1}-3E_{j+1,j+2} - v_j - \delta_j\right).\end{align*}
The elements $u_j$ and $r_j$ for $j = 1,2,3$ span $\k$.  In fact, the $u_j$ span the long-root $\sl_2$ and the $r_j$ span the short-root $\sl_2$.  As the reader may check by explicit computation, they satisfy the following commutation relations:

\begin{itemize}
\item $[u_i,r_j] = 0$ for all $i,j$;
\item $[u_j,u_{j+1}] = u_{j+2}$ 
\item $[r_j, r_{j+1}] = r_{j+2}$.\end{itemize}

Write $i = \sqrt{-1}$.  For later use, we set
\begin{itemize}
\item $h_u = 2i u_2$
\item $e_u = u_1 - iu_3$
\item $f_u = -u_1-iu_3$
\item $h_r = 2i r_2$
\item $e_r = r_1 - ir_3$
\item $f_r = -r_1 - i r_3$.\end{itemize}
These are $\mathfrak{sl}_2$-triples.  I.e., $[h_u,e_u] = 2 e_u$, $[h_u,f_u] = -2f_u$, $[e_u,f_u] = h_u$, and similarly for the $r$'s.  

\subsubsection{Basis of $\mathfrak{p}$} In this paragraph, we write down a good basis of $\p$.  For $j = 1,2,3$, set $y_j = v_j -\delta_j$, and $f_j = E_{j+1,j+2}+E_{j+2,j+1}$, with indices taken modulo $3$.  Together with the split cartan $\h$, these elements span $\p$.  We will now explicitly describe a basis of $\p$ that is well-suited to our $\mathfrak{sl}_2$-triples above. Here is that basis.

\begin{align*} d_3&:=f_1-y_1 + i(f_3+y_3) & h_3&:= 2(E_{33}-E_{11}) + 2i f_2  \\ d_1&:=-\frac{2}{3}(E_{11}-2E_{22}+E_{33}) + \frac{2}{3} i y_2 &  h_1&:= \frac{3f_1+y_1}{3} + i \left(\frac{3f_3-y_3}{3}\right) \\ d_{-1}&:=- \frac{3f_1+y_1}{3}+i \left(\frac{3f_3-y_3}{3}\right) & h_{-1}&:=-\frac{2}{3}(E_{11}-2E_{22}+E_{33}) - \frac{2}{3} i y_2 \\ d_{-3}&:= 2(E_{33}-E_{11}) - 2if_2 & h_{-3}&:= y_1-f_1+i(f_3+y_3). \end{align*}

Denote by $x_{\ell}, y_{\ell}$ a standard basis for the standard $2$-dimensional representation of the $\mathfrak{sl}_2$ spanned by $e_u, h_u, f_u$, and denote by $x_s, y_s$ a standard basis for the standard $2$-dimensional representation of the $\mathfrak{sl}_2$ spanned by $e_r, h_r, f_r$.  (The ``$\ell$'' and ``$s$'' are for \emph{long} and \emph{short}.)  As a representation of $\sl_2$, the triple $\{e_u,h_u,f_u\}$ spans a copy of $Sym^2(V_2)$.  We normalize the choice of $x_{\ell}, y_{\ell}$ by the correspondence $e_{u} \mapsto x_{\ell}^2$, $h_{u} \mapsto -2x_{\ell}y_{\ell}$, and $f_u \mapsto -y_{\ell}^2$.
\begin{proposition} The map $\p \rightarrow V_2 \boxtimes S^3 V_2$ sending $h_{3-2i} \mapsto x_{\ell}\boxtimes x_{s}^{3-i}y_s^{i}$, and $d_{3-2i} \mapsto y_{\ell} \boxtimes x_s^{3-i}y_{s}^{i}$ is an isomorphism of $\mathfrak{sl}_2 \oplus \mathfrak{sl}_2$ modules.  In particular, $h_u$ acts on $h_{*}$ by the eigenvalue $1$ and $d_{*}$ by the eigenvalue $-1$.  The element $h_r$ acts on $d_{k}$ and $h_{k}$ by the eigenvalue $k$. \end{proposition}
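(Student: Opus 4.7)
The plan is to prove this by direct computation in three steps. First, I would verify that the eight vectors $h_{\pm 3}, h_{\pm 1}, d_{\pm 3}, d_{\pm 1}$ (viewed inside $\p\otimes\C$) are linearly independent and hence form a basis; this is immediate from the explicit expressions, since they visibly span the $8$-dimensional subspace $\h \oplus \langle f_1,f_2,f_3\rangle \oplus \langle y_1,y_2,y_3\rangle$ of $\p\otimes\C$. The linear map described in the statement is therefore automatically a linear isomorphism, and the real content of the proposition is its $\k$-equivariance.

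Second, I would verify the two Cartan-weight claims. Unpacking definitions gives
\[
u_2 = \tfrac{1}{4}\bigl(E_{1,3} - E_{3,1} + v_2 + \delta_2\bigr), \qquad r_2 = \tfrac{1}{4}\bigl(3 E_{1,3} - 3 E_{3,1} - v_2 - \delta_2\bigr),
\]
so each bracket $[h_u,h_k]$, $[h_u,d_k]$, $[h_r,h_k]$, $[h_r,d_k]$ reduces, via the bracket table of Section \ref{sec:G2andOct} (the identities $[\delta_{j-1},v_j] = 3E_{j,j-1}$, $[v_{j-1},v_j] = 2\delta_{j+1}$, $[\delta_j,v_j] = 3E_{jj} - (E_{11}+E_{22}+E_{33})$, and the $\sl_3$-action on the $E_{ij}, v_j, \delta_j$), to a short $\C$-linear calculation. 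The various imaginary and rational factors in the definitions of $u_j, r_j, h_k, d_k$ are calibrated precisely so that these eigenvalues come out to the integers stated.

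Third, to upgrade the weight match to a full $\sl_2 \oplus \sl_2$-module isomorphism, it suffices to check equivariance under one non-Cartan generator from each $\sl_2$-factor. Concretely, I would verify $f_u \cdot h_k = -d_k$ for each $k$, so that $(h_k, d_k)$ realizes a standard $V_2$-doublet under the long-root $\sl_2$ with the normalization $e_u \leftrightarrow x_\ell^2$, $f_u \leftrightarrow -y_\ell^2$ fixed in the text; and I would check that $e_r\cdot h_k$ (respectively $e_r\cdot d_k$) is the expected multiple of $h_{k+2}$ (respectively $d_{k+2}$), with the scalar matching the coefficient produced when $e_r$ acts on the basis vector $x_s^{3-i}y_s^{i}$ of $S^3 V_2$. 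Combined with the weight information from step two, these two checks determine the map on all of $\p\otimes\C$ via the $\sl_2\oplus\sl_2$-action and identify it with the target $V_2\boxtimes S^3 V_2$.

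The hard part will be purely bookkeeping: each bracket expands into several terms in $\{E_{ij}, v_k, \delta_k\}$ with imaginary coefficients that must be tracked carefully, and multiple bracket identities are invoked per calculation. There is no conceptual obstacle, however; the proposition is essentially the statement that the author's chosen bases of $\k$ and $\p\otimes\C$ simultaneously diagonalize the two Cartan generators of $\k$ and align cleanly with the standard weight bases of $V_2$ and $S^3 V_2$.
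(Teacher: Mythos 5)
Your proposal is correct and matches the paper's approach: the paper's entire proof is the sentence ``Again, this is just explicit computation,'' and your three-step plan (basis check, Cartan weights, equivariance under one raising/lowering operator per $\sl_2$-factor) is a sound and well-organized way to carry out exactly that computation. The only caveat is that, like the paper, you have deferred the actual bookkeeping, so the signs in steps like $f_u \cdot h_k = -d_k$ still need to be confirmed against the fixed normalization $e_u \mapsto x_\ell^2$, $f_u \mapsto -y_\ell^2$.
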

\begin{proof} Again, this is just explicit computation. \end{proof}

We will also need to know how the Killing form restricts to the $h_i$ and $d_j$'s. This is recorded in the following lemma. Recall from subsection \ref{subsec:Cartan} the pairing $(\cdot,\cdot)$ on $\g_2$, which is restricted from $\mathfrak{so}(V_7) \simeq \wedge^2 V_7$.  
\begin{lemma} One has $d_3 = - \overline{h}_{-3}$, $d_{1} = \overline{h}_{-1}$, $d_{-1} = - \overline{h}_1$, and $d_{-3} = \overline{h}_3$.  Futhermore, one has the following pairings:
\begin{itemize}
\item $(\overline{h}_{-3},h_{-3}) = 16$
\item $(\overline{h}_{-1},h_{-1}) = 16/3$
\item $(\overline{h}_1,h_{1}) = 16/3$
\item $(\overline{h}_{3},h_{3}) = 16$. \end{itemize}
Finally, $(h_i,h_j) = (\overline{h}_i,\overline{h}_j) = 0$ for all $i,j$  and $(\overline{h}_i.h_j) = 0$ if $i \neq j$.\end{lemma}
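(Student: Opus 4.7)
The plan is to tackle this in three steps, since the lemma is essentially a bookkeeping computation.

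First, I would verify the four identities $d_k = \pm\overline{h}_{-k}$ by direct inspection of the defining formulas. Treating $i = \sqrt{-1}$ as the imaginary unit and each of $E_{ij}, v_j, \delta_j, f_j, y_j$ as a real element, complex conjugation simply sends $i \mapsto -i$. Matching real and imaginary parts between the formulas for $d_{3-2\ell}$ and $h_{3-2\ell}$ in the preceding display immediately gives the four identities, e.g.\ $-\overline{h}_{-3} = -(y_1-f_1) + i(f_3+y_3) = f_1 - y_1 + i(f_3+y_3) = d_3$.

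Second, for the four nonzero pairings, I would assemble the table of basic pairings inherited from $(w \wedge x, y \wedge z) = (w,z)(x,y) - (w,y)(x,z)$: one finds $(E_{ij}, E_{kl}) = \delta_{jk}\delta_{il}$, $(v_j, \delta_k) = -3\delta_{jk}$, and $(v_j, v_k) = (\delta_j, \delta_k) = 0$, with all mixed $E$-with-$v/\delta$ pairings vanishing. This immediately yields $(f_j, f_k) = 2\delta_{jk}$ and $(y_j, y_k) = 6\delta_{jk}$. Writing $h_{-3} = A + iB$ with $A = y_1 - f_1$ and $B = f_3 + y_3$, the symmetry of the $\C$-bilinear pairing makes the cross terms in $(A - iB, A + iB)$ cancel, giving $(\overline{h}_{-3}, h_{-3}) = (A,A) + (B,B) = (6+2) + (2+6) = 16$. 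The same trick computes the other three; in the middle two cases a factor $1/9$ from the $3$'s in the denominators turns $(3f+y, 3f+y) = 18 + 6 = 24$ into $24/9 = 8/3$, and doubling gives $16/3$.

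Third, rather than continuing to brute-force the vanishings, I would invoke $\ad$-invariance of the pairing (which is proportional to the Killing form, hence invariant under the adjoint action on $\g_2 \otimes \C$). For a semisimple $X$ and eigenvectors $y, z$ with eigenvalues $\lambda, \mu$, one has $(\lambda + \mu)(y, z) = 0$. Taking $X = h_u$, which by the preceding proposition acts by $+1$ on every $h_k$ and $-1$ on every $d_k$, the weight sum is $2$ on $(h_i, h_j)$ and $-2$ on $(\overline{h}_i, \overline{h}_j) = (\pm d_{-i}, \pm d_{-j})$, killing both families. For $(\overline{h}_i, h_j) = (\pm d_{-i}, h_j)$ the $h_u$-weight is $0$ and the argument is inconclusive, so I would switch to $X = h_r$, which acts on $h_k$ and $d_k$ by the scalar $k$; the weight of the pair is then $-i + j$, forcing $(\overline{h}_i, h_j) = 0$ whenever $i \neq j$.

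The main obstacle, such as it is, is purely clerical: keeping the signs in the $d_k = \pm\overline{h}_{-k}$ identities straight, and propagating the factors of $1/3$ appearing in $h_{\pm 1}, d_{\pm 1}$ through the bilinear-form computation. No genuinely new input is needed beyond the formula for the pairing on $\wedge^2 V_7$ and the eigenvalue data already recorded.
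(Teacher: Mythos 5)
Your proposal is correct, and it follows the route the paper intends: the paper offers no written proof of this lemma (the preceding proposition is likewise dispatched as ``explicit computation''), and your direct verification via the basic pairings $(E_{ij},E_{kl})=\delta_{jk}\delta_{il}$, $(v_j,\delta_k)=-3\delta_{jk}$, $(v_j,v_k)=(\delta_j,\delta_k)=0$ reproduces exactly the values $16$ and $16/3$. Your use of $\mathrm{ad}$-invariance together with the $h_u$- and $h_r$-weights from the preceding proposition to get the vanishing statements is a clean shortcut over brute force, but it is still the same elementary computation in substance.
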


\begin{remark} In \cite{pollackQDS}, we can replace some of the above explicit computation, as follows.  As described above, $\g = \g_2$ has a $\Z/3$-grading, and one can define the Lie algebra structure through this $\Z/3$-model.  The Lie algebra $\g$ also has a $\Z/2$-grading, $\g = \g^0 \oplus \g^{1}$ and one can define the structure through this $\Z/2$-model.  All of the larger quaternionic exceptional Lie algebras also have both a $\Z/3$ and $\Z/2$ model.  Denote by $G$ the adjoint group associated to the Lie algebra $\g$.  In \cite[section 5]{pollackQDS} we write down an explicit element $\mathcal{C} \in G(\C)$ with the property that $\mathcal{C}(\g^0 \otimes \C) = \k$ and $\mathcal{C}(\g^1 \otimes \C) = \p$.  One gets good bases for $\k$ and $\p$ and all their associated structure from the structure of $\g^0$ and $\g^1$ and the fact that $\mathcal{C}$ is an automorphism of $\g \otimes \C$.

In order to define $\mathcal{C}$, we use both the $\Z/3$-model, the $\Z/2$-model, and an explicit isomorphism between them.  This element $\mathcal{C}$ is a sort of explicit, exceptional ``Cayley'' transform.  The reason for the name is that classical Cayley tranform has an analogous property: On $\mathfrak{sp}_{2n}$, the Cayley transform is an element $C \in \Sp_{2n}(\C)$ that conjugates the decomposition $\mathfrak{n} \oplus \mathfrak{m} \oplus \overline{\mathfrak{n}}$ to $\p^+ \oplus \k \oplus \p^{-}$, where $\mathfrak{m}$ is the Lie algebra of the Levi of the Siegel parabolic, $\mathfrak{n}$ is its unipotent radical, and $\overline{\mathfrak{n}}$ is the opposite unipotent.  A more abstract Cayley transform for the quaternionic Lie algebras can be found in \cite{grossWallach2}.  To keep things as elementary as possible, we have defined $\g_2$ through the trace $0$ octonions $V_7$ and used the $\Z/3$-model of $\g$, and thus not introduced the Cayley transform $\mathcal{C}$.\end{remark}

\subsection{Iwasawa decomposition} To write down the equations $\mathcal{D}_n \W^\chi = 0$, and to utilize the equivariance properties of the functions $\W^\chi$, we will make use of the Iwasawa decomposition on $\g$.  That is, if $X \in \g$, one can write $X = n + m +k$ for $n \in \mathfrak{n}$ the Lie algebra of $N(\R)$, $m \in \mathfrak{gl}_2$ the Lie algebra of the Levi $M(\R)$ of the Heisenberg parabolic $P$, and $k \in \k$.

Because the abelianization of $N$ is $Sym^3(V_2)$, we have a map $\mathfrak{n} \rightarrow Sym^3(V_2)$ sending an element $X \in \mathfrak{n}$ to a binary cubic polynomial $p_X$ in the two variables $u,v$.  Specifically, one sends
\[X = a E_{12} + \frac{b}{3} v_1 + \frac{c}{3} \delta_3 + d E_{23} + \mu E_{13} \mapsto p_{X} = au^3 + bu^2v + cuv^2 + dv^3 = (a,\frac{b}{3},\frac{c}{3},d).\]
We now record the Iwasawa decomposition of the elements $h_i$ using this polynomial map.  (Because $\W^\chi(\exp(\mu E_{13})g) = \W^\chi(g)$, the loss of the information about $E_{13}$ spanning $N_0 = [N,N]$ will not cause any problem.)

Set $\epsilon_1 = E_{22}-E_{33}$ and $\epsilon_2 = E_{11}-E_{22}$.  These are a basis of $\h$.  In fact, under the map $\mathfrak{gl}_2 \rightarrow \g_2$ from (\ref{gl2Ident}), $\epsilon_1$ is the image of $\mm{1}{0}{0}{0}$ and $\epsilon_2$ the image of $\mm{0}{0}{0}{1}$.

We have
\begin{align*} h_{3} &\equiv - 2(\epsilon_{1}+\epsilon_{2})-(h_u+h_r) \\
h_{1} &\equiv 2(v+iu)^2(v-iu)-\frac{4}{3} f_{r} \\
h_{-1} &\equiv \frac{2}{3}(\epsilon_{1}-\epsilon_{2}-2iv_{2}) + \frac{1}{3}(3h_u - h_{r}) \\
h_{-3} &\equiv -2(v+iu)^3 -4 e_{u}.\end{align*}
We are here employing the map $\mathfrak{n} \rightarrow Sym^3(V_2)$ just described.  One obtains the Iwasawa decomposition of the $\overline{h}_j$'s by taking the  complex conjugate of the above expressions.

\subsection{Actions of Lie algebra elements} To write down the equations $\mathcal{D}_n \W^\chi = 0$ concretely, we coordinatize the Levi subgroup $M \simeq \GL_2$ of the Heisenberg parabolic, and make explicit how the Lie algebra elements appearing in the Iwasawa decomposition above act in these coordinates.

First, because $\W^\chi$ has equivariance properties with respect to $N(\R)$ and the maximal compact subgroup $K$ of $G_2(\R)$, it suffices to choose coordinates for $B(\R)^{0}$, the connected component of the identity of the real points of the uppertriangular Borel subgroup of $M \simeq \GL_2$.  We use the coordinates $g= \mb{1}{x}{}{1}\mb{y^{1/2}}{}{}{y^{-1/2}} \mb{w}{}{}{w}$ for $x \in \R$, and $y, w \in \R^\times_{>0}$. 

More precisely, suppose $\W^\chi$ is a generalized Whittaker function for $\chi$ annihilated by $\mathcal{D}_n$.  That is, assume that $\W^\chi: G_2(\R) \rightarrow \V_n^\vee$ is a smooth function of moderate growth, with $\mathcal{D}_n \W^\chi = 0$, and that $\W^\chi(ngk) = \chi(n)k^{-1} \cdot \W^\chi(g)$ for all $n \in N(\R)$, $k \in K$ and $g \in G_2(\R)$.  We define $\phi: \R \times \R^\times_{>0} \times \R^\times_{>0} \rightarrow \V_n^\vee$ as
\[\phi(x,y,w) = \W^\chi\left(\mb{1}{x}{}{1}\mb{y^{1/2}}{}{}{y^{-1/2}} \mb{w}{}{}{w}\right).\]
We will translate the condition $\mathcal{D}_n \W^\chi = 0$ into explicit equations for $\phi$.

Now, because $\W^\chi(gk) = k^{-1} \cdot \W^\chi(g)$ for all $k \in K$, if $X \in \k$ then $(X\W^\chi)(g) = -X \cdot \W^\chi(g)$.  The character $\chi$ on $N(\R)$ is of the form $\chi(x) = e^{i \langle \omega , p_{x} \rangle}$ for a unique $\omega \in W$.  It follows that if $X \in \mathfrak{n}$ and $m \in M(\R)$, one has $(X\W^\chi)(m) = i\langle \omega, m \cdot p_{X}\rangle \W^\chi(m)$.  Finally, by definition, one has
\begin{align}\label{eqns:phiCoords} \nonumber (\epsilon_1 + \epsilon_2)\phi(x,y,w) &= w \partial_{w}\phi(x,y,w) \\
\nonumber (\epsilon_1 - \epsilon_2)\phi(x,y,w) &= 2y \partial_{y} \phi(x,y,w) \\
v_2\phi(x,y,w) &= y\partial_{x}\phi(x,y,w).\end{align}

\subsection{Schmid equation} In this subsection we write down the equation $\mathcal{D}_n \W^\chi = 0$ explicitly in coordinates.  First, recall that $\mathcal{D}_n = pr_{-} \circ \widetilde{D}$, where 
\[pr_{-} : \left(Sym^{2n}(V_2) \boxtimes \mathbf{1}\right) \otimes V_2 \boxtimes W \rightarrow Sym^{2n-1}(V_2) \boxtimes W\]
is the non-trivial $K$-equivariant map (unique up to scalar multiple).  We fix the map $Sym^{2n}(V_2) \otimes V_2 \rightarrow Sym^{2n-1}(V_2)$ as
\[x_{\ell}^a y_{\ell}^b \otimes v \mapsto a \langle x_{\ell},v\rangle x_{\ell}^{a-1}y_{\ell}^{b} + b\langle y_{\ell}, v \rangle x_{\ell}^{a} y_{\ell}^{b-1}\]
where $\langle \cdot,\cdot \rangle$ is the invariant symplectic form on $V_2$ with $\langle x_{\ell}, y_{\ell} \rangle = 1$.

Thus, for any function $F: G_2(\R) \rightarrow \V_n^\vee$ we have
\begin{align*} \widetilde{D} F &= -d_{3}F \otimes h_{-3} + 3d_1F \otimes h_{-1}-3d_{-1}F \otimes h_1 + d_{-3}F\otimes h_{3}\\ &\, + h_3F\otimes d_{-3} - 3h_{1}F \otimes d_{-1} + 3h_{-1}F \otimes d_{1} - h_{-3}F \otimes d_{3}.\end{align*}

Applying the Iwasawa decomposition of the elements $h_i, \overline{h_j}$ computed above, and the equivariance properties of $\W^\chi$ we obtain the following for $\widetilde{D} \W^\chi$:

\begin{align*} \widetilde{D} \W^\chi &= -2i \langle \omega, m \cdot (v-iu)^3 \rangle \W^\chi \otimes h_{-3} - 4f_u \cdot (\W^\chi \otimes h_{-3}) + 4\W^\chi \otimes d_{-3} \\
&- 2(\epsilon_2 - \epsilon_1 - 2i v_2) \W^\chi \otimes h_{-1} + (3h_u-h_r-4) \cdot (\W^\chi \otimes h_{-1}) \\
&+6i\langle \omega, m \cdot (v-iu)^2(v+iu)\rangle \W^\chi \otimes h_{1} - 4e_{r} \cdot (\W^\chi \otimes h_1) + 4\W^\chi\otimes h_{3} \\
&- 2(\epsilon_1+\epsilon_2) \W^\chi \otimes h_{3} - (h_u+h_r-4) \cdot (\W^\chi \otimes h_{3}) \\
&-2(\epsilon_1+\epsilon_2) \W^\chi \otimes d_{-3} + (h_u+h_r+4) \cdot (\W^\chi \otimes d_{-3}) \\ 
&- 6 i \langle \omega, m \cdot (v+iu)^2(v-iu)\rangle \W^\chi \otimes d_{-1} - 4f_{r} \cdot (\W^\chi \otimes d_{-1}) + 4 \W^\chi \otimes d_{-3} \\
& -2(\epsilon_2-\epsilon_1 + 2i v_2)\W^\chi \otimes d_{1} + (h_r - 3h_u - 4) \cdot (\W^\chi \otimes d_{1}) \\
&+ 2i \langle \omega, m \cdot (v+iu)^3 \rangle \W^\chi \otimes d_{3} - 4e_{u} \cdot (\W^\chi \otimes d_{3}) + 4 \W^\chi \otimes h_{3}.\end{align*}

Write $[x_{\ell}^r]$ to mean $\frac{x_{\ell}^r}{r!}$, and similarly for $y$.  For $k \in \Z$ with $-n \leq k \leq n$, define $\W^{\chi,k}: G_2(\R) \rightarrow \C$ via $\W^\chi(g) = \sum_{-n \leq k \leq n}{\W^{\chi,k} [x_{\ell}^{n+k}][y_{\ell}^{n-k}]}$.  Then applying the contraction $pr_{-}$, we get
\begin{align*} \mathcal{D}_n \W^\chi &= \sum_{k}{\mathcal{D}_{3,0}^{n,k} \W^{\chi,k} [x_{\ell}^{n+k}] [y_{\ell}^{n-k-1}] \boxtimes x_{s}^3} + \sum_{k}{\mathcal{D}_{2,1}^{n,k}\W^{\chi,k} [x_{\ell}^{n+k}] [y_{\ell}^{n-k-1}]\boxtimes x_{s}^2y_{s}} \\
& +\sum_{k}{\mathcal{D}_{1,2}^{n,k}\W^{\chi,k}[x_{\ell}^{n+k}] [y_{\ell}^{n-k-1}] \boxtimes x_{s}^1y_{s}^2} + \sum_{k}{\mathcal{D}_{0,3}^{n,k} \W^{\chi,k} [x_{\ell}^{n+k}] [y_{\ell}^{n-k-1}] \boxtimes y_{s}^3} \end{align*}
for functions $\mathcal{D}_{i,j}^{n,k} \W^{\chi,k}$ that are as follows:

\begin{align*} \frac{1}{2}\mathcal{D}_{3,0}^{n,k}\W^{\chi,k} &= (\epsilon_1+\epsilon_2 - (2n+2) -k)\W^{\chi,k} + i \langle \omega, m \cdot (u-iv)^3\rangle \W^{\chi,k+1} \\
 \frac{1}{2}\mathcal{D}_{0,3}^{n,k} \W^{\chi,k} &= -(\epsilon_1+\epsilon_2 - (2n+2) + k)\W^{\chi,k+1} + i \langle \omega, m \cdot (v-iu)^3 \rangle \W^{\chi,k} \\ 
\frac{1}{2}\mathcal{D}_{2,1}^{n,k}\W^{\chi,k} &= -3i \langle \omega, m \cdot (v-iu)^2(v+iu)\rangle \W^{\chi,k}-(\epsilon_2-\epsilon_1 + 2i v_2 + 3(k+1))\W^{\chi,k+1}\\
\frac{1}{2}\mathcal{D}_{1,2}^{n,k}\W^{\chi,k} &= -3 i \langle \omega, m \cdot (v+iu)^2(v-iu)\rangle \W^{\chi,k} + (\epsilon_2 - \epsilon_1 - 2i v_2 - 3k)\W^{\chi,k}.\end{align*} 

Since $\mathcal{D}_n\W^\chi = 0$, the above expressions yield differential-difference equations satisfied by the $\W^{\chi,k}$.  Recall that we defined $\phi(x,y,w) = \W^\chi\left(\mb{1}{x}{}{1}\mb{y^{1/2}}{}{}{y^{-1/2}} \mb{w}{}{}{w}\right)$, and similarly define $\phi_k(x,y,w): \R\times \R^\times_{>0} \times \R^\times_{>0} \rightarrow \C$ as 
\[\phi_k(x,y,w) = \W^{\chi,k}\left(\mb{1}{x}{}{1}\mb{y^{1/2}}{}{}{y^{-1/2}} \mb{w}{}{}{w}\right).\]
Applying the identities (\ref{eqns:phiCoords}), we obtain the following differential equations for the $\phi_k$.

\begin{proposition}\label{difEqs1} Write $m \in \GL_2(\R)$ for the element $\mb{1}{x}{}{1}\mb{y^{1/2}}{}{}{y^{-1/2}} \mb{w}{}{}{w}$.  The $\phi_k$ satisfy the following differential-difference equations:
\begin{enumerate}
\item $(w\partial_w - (2n+2) -k)\phi_k + i \langle \omega ,m \cdot  (v+iu)^3 \rangle \phi_{k+1} = 0$
\item $-(w\partial_{w} - (2n+2) + k)\phi_{k+1} + i \langle \omega, m \cdot (v-iu)^3 \rangle \phi_k = 0$
\item $-3i \langle \omega,  m \cdot (v-iu)^2(v+iu)\rangle \phi_{k}-(-2y\partial_{y} + 2i y\partial_x + 3(k+1))\phi_{k+1} =0$
\item $-3i \langle \omega, m \cdot (v+iu)^2(v-iu) \rangle \phi_{k+1}+ (-2y\partial_y - 2i y\partial_x - 3k)\phi_{k} = 0$.\end{enumerate}
\end{proposition}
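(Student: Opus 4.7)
The plan is straightforward: the four equations are obtained by (a) collecting the coefficients of a basis of the image space $Sym^{2n-1}(V_2)\boxtimes W$ in the already-displayed expansion of $\mathcal{D}_n\W^\chi$, and then (b) translating the resulting Lie-algebra equations into partial-differential equations in the coordinates $(x,y,w)$.

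For step (a), the monomials $[x_\ell^{n+k}][y_\ell^{n-k-1}]\boxtimes x_s^a y_s^b$ with $a+b=3$ and $-n\le k \le n-1$ form a basis of $Sym^{2n-1}(V_2)\boxtimes W$, so the vanishing $\mathcal{D}_n\W^\chi=0$ (already established from the $K$-type decomposition (\ref{eqn:Ktypes})) forces each coefficient $\mathcal{D}_{a,b}^{n,k}\W^{\chi,k}$ in the expansion immediately preceding the proposition to vanish. This gives four scalar differential-difference equations for each $k$, each relating $\W^{\chi,k}$ and $\W^{\chi,k+1}$. For step (b), I restrict to the Iwasawa cell $m=\mb{1}{x}{}{1}\mb{y^{1/2}}{}{}{y^{-1/2}}\mb{w}{}{}{w}$, set $\phi_k(x,y,w)=\W^{\chi,k}(m)$, and substitute the identities (\ref{eqns:phiCoords}): $(\epsilon_1+\epsilon_2)\phi=w\partial_w\phi$, $(\epsilon_1-\epsilon_2)\phi=2y\partial_y\phi$, and $v_2\phi=y\partial_x\phi$. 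The factors $i\langle\omega,m\cdot(v\pm iu)^3\rangle$ (and their variants with $(v\pm iu)^2(v\mp iu)$) arose from $(X\W^\chi)(m)=i\langle\omega,m\cdot p_X\rangle\W^\chi(m)$ for $X\in\n$, so they are pure multiplication operators on $\W^{\chi,k}$ and pass unchanged to $\phi_k$. After cancelling the overall factor of $\tfrac12$ sitting in front of each $\mathcal{D}_{a,b}^{n,k}$, one reads off directly the four equations (1)--(4) of the proposition.

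There is no real obstacle: all the heavy lifting --- the Cartan involution, the bases of $\k$ and $\p$ adapted to the $\mathfrak{sl}_2\oplus\mathfrak{sl}_2$ structure, and the Iwasawa decompositions of $h_{\pm 1}, h_{\pm 3}$ --- has been carried out in the preceding subsections, and the contraction $pr_-$ has already been applied to reach the expansion $\sum_k\mathcal{D}_{a,b}^{n,k}\W^{\chi,k}[x_\ell^{n+k}][y_\ell^{n-k-1}]\boxtimes x_s^a y_s^b$. The only care needed is bookkeeping: keeping signs consistent between $(u\pm iv)^3$ and $(v\pm iu)^3$ in the various character pairings, between $\epsilon_2-\epsilon_1$ and $-(\epsilon_1-\epsilon_2)$, and between the left-action conventions on $\V_n^\vee$. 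This is exactly what is summarized by the author's terse ``Again, this is just explicit computation''.
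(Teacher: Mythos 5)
Your proposal is correct and is exactly the route the paper takes: the proposition is stated without a separate proof precisely because it follows by setting each coefficient $\mathcal{D}_{i,j}^{n,k}\W^{\chi,k}$ in the displayed expansion of $\mathcal{D}_n\W^\chi$ to zero and substituting the coordinate identities (\ref{eqns:phiCoords}). The only quibble is that the quoted ``Again, this is just explicit computation'' is the proof of the earlier proposition on the $\mathfrak{sl}_2\oplus\mathfrak{sl}_2$-module structure of $\p$, not of this one; otherwise your two-step outline matches the paper.
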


\subsection{The formula}  In this subsection we analyze the equations of Proposition \ref{difEqs1}.  We find that if $\omega \neq 0$ then there is at most a one-dimensional space of solutions to these equations that are of moderate growth, and that the solutions are given as in (\ref{eqn:Wwv}).  In particular, we prove Thereom \ref{thm:FE}.

First, set $z = x+iy$, $\partial_z = \frac{1}{2}\left(\partial_{x}- i \partial_{y}\right)$ and $\partial_{z^*} = \frac{1}{2}(\partial_{x} +i \partial_{y})$. As is clear from Proposition \ref{difEqs1}, this variable change will help solve the above differential equations.  Furthermore, we note that if $f$ is a polynomial in $u,v$, and $m \in \GL_2$, as we have in Proposition \ref{difEqs1}, then $(m \cdot f)(u,v) = \det(m)^2 f\left(m^{-1}\left(\begin{array}{c} u \\ v \end{array}\right)\right)$.  To go further, we give expressions for the terms in Proposition \ref{difEqs1} of the form $\langle \omega, m \cdot f \rangle$ for the binary cubics $f$ appearing in this proposition.

The following lemma may be checked by direct computation.
\begin{lemma} Let the notation be as above. Then $m \cdot (u-iv)^3 = wy^{-3/2} (u-zv)^3$. Furthermore, suppose $\omega = - (a,\frac{b}{3},\frac{c}{3},d)$, and define $p_{\chi}(t) = at^3 + bt^2 + ct +d$, a cubic polynomial.  Then
\begin{enumerate}
\item $\langle \omega, m \cdot (-v+iu)^3 \rangle = -iw y^{-3/2} p_\chi(z^*)$
\item $\langle \omega, m \cdot (-v-iu)^3 \rangle = iw y^{-3/2} p_\chi(z)$
\item $\langle \omega, m \cdot 3(-v-iu)^2(-v+iu) \rangle = -w y^{-3/2}(2y p'_\chi(z) + 3i p_\chi(z)) = -2wy^{-5/2}\partial_{z}\left(p_{\chi}(z)y^{-3}\right)$
\item $\langle \omega, m \cdot 3(-v+iu)^2(-v-iu)\rangle = -2wy^{-5/2}\partial_{z^*}\left(p_{\chi}(z^{*})y^{-3}\right)$.\end{enumerate}
\end{lemma}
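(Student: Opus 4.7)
The proof is purely computational; the plan is to substitute the explicit form of $m$ into each pairing and reduce everything to evaluating $\langle \omega, (u-zv)^a(u-z^*v)^b \rangle$ with $a+b = 3$.

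First I would compute $(u',v')^{T} := m^{-1}(u,v)^{T}$ by inverting the three factors of $m$: one gets $u' = w^{-1}y^{-1/2}(u-xv)$ and $v' = w^{-1}y^{1/2}v$. The key observation is that $u' - iv' = w^{-1}y^{-1/2}(u - zv)$ and $u' + iv' = w^{-1}y^{-1/2}(u-z^*v)$. Combined with $\det(m)^{2} = w^{4}$, this yields $m\cdot(u-iv)^{3} = wy^{-3/2}(u-zv)^{3}$, $m \cdot (u+iv)^{3} = wy^{-3/2}(u-z^*v)^{3}$, and also $m \cdot (u-iv)^{2}(u+iv) = wy^{-3/2}(u-zv)^{2}(u-z^*v)$, which handles the opening displayed assertion. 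I would then rewrite the cubics $(-v \pm iu)^{j}(-v\mp iu)^{3-j}$ appearing in items (1)--(4) in terms of $(u \mp iv)$ cubics using $-v-iu = -i(u-iv)$ and $-v+iu = i(u+iv)$; this step merely introduces phases.

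The pairings themselves are computed from $\langle f,f'\rangle = ad' - \tfrac{1}{3}bc' + \tfrac{1}{3}cb' - da'$ applied to the coefficients of the binary cubics. For $(a,b) \in \{(3,0),(0,3)\}$, expanding $(u-zv)^{3}$ and pairing against $\omega = -(au^{3}+bu^{2}v+cuv^{2}+dv^{3})$ collapses immediately to $p_\chi(z)$ or $p_\chi(z^*)$, giving items (1) and (2). For the mixed cases, I would expand $(u-zv)^{2}(u-z^*v)$, substitute $z^* = z - 2iy$ to collect in powers of $z$, and recognize the combination $3az^{2} + 2bz + c$ as $p_\chi'(z)$; this gives $\langle \omega, (u-zv)^{2}(u-z^*v)\rangle = p_\chi(z) - \tfrac{2iy}{3}p_\chi'(z)$, which after multiplication by the prefactor $-3iwy^{-3/2}$ inherited from Step 2 yields item (3). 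Item (4) follows from the symmetric computation ($z\leftrightarrow z^*$, equivalently $y \leftrightarrow -y$).

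Finally, the second form appearing in (3) and (4) is the identity $2yp_\chi'(z) + 3ip_\chi(z) = 2y^{4}\partial_z(p_\chi(z)y^{-3})$, immediate from $\partial_z y = -i/2$. The only obstacle here is the tedium of sign and phase bookkeeping, in particular the sign of $\omega$ and the phases from rewriting $-v \pm iu = \pm i(u \mp iv)$; there is no conceptual difficulty.
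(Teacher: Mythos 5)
Your computation is correct and is exactly the ``direct computation'' the paper invokes without writing out: invert $m$ to get $u'-iv'=w^{-1}y^{-1/2}(u-zv)$, use $\det(m)^2=w^4$, convert $-v\mp iu=\mp i(u\mp iv)$, and evaluate the symplectic pairing coefficientwise with the substitution $z^*=z-2iy$. Note that your identity $2yp_\chi'(z)+3ip_\chi(z)=2y^{4}\partial_z\bigl(p_\chi(z)y^{-3}\bigr)$ actually produces $-2wy^{+5/2}\partial_z\bigl(p_\chi(z)y^{-3}\bigr)$ rather than the $y^{-5/2}$ printed in the lemma; the positive exponent is the one consistent with Corollary \ref{cor:Gkdifs}, so the printed $y^{-5/2}$ appears to be a typo and your version is the right one.
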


Applying the variable change and the lemma, we obtain the following simplified differential equations.  Define a function $G_k$ by the equality $\phi_k = w^{2n+2} G_k$.
\begin{corollary}\label{cor:Gkdifs} The $G_k$ satisfy the following differential-difference equations.
\begin{enumerate}
\item $(w\partial_w + k+1)G_{k+1} = - w y^{-3/2} p_\chi(z^*) G_k$
\item $(w \partial_w - k)G_{k} = - w y^{-3/2} p_{\chi}(z) G_{k+1}$
\item $(4iy\partial_{z} + 3k)G_k = -2i w y^{5/2}\partial_{z}(p_\chi(z) y^{-3})G_{k+1}$
\item $(4iy \partial{z^*} + 3k)G_k = -2i w y^{5/2} \partial_{z^*}(p_\chi(z^*)y^{-3})G_{k-1}$.\end{enumerate}
\end{corollary}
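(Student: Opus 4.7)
My plan is to deduce Corollary \ref{cor:Gkdifs} from Proposition \ref{difEqs1} by three substitutions performed in parallel on each of the four equations: the change of function $\phi_k = w^{2n+2}G_k$, the explicit evaluations of the inner products supplied by the preceding lemma, and the change of variables from $(x,y)$ to the pair $(z, z^*)$.

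The substitution $\phi_k = w^{2n+2}G_k$ is tailored to absorb the $(2n+2)$-shifts in equations (1) and (2) of Proposition \ref{difEqs1}. Indeed,
\[
(w\partial_w - (2n+2))(w^{2n+2}G_k) = w^{2n+2}\, w\partial_w\, G_k,
\]
so after dividing through by $w^{2n+2}$, the operators $w\partial_w - (2n+2) - k$ and $w\partial_w - (2n+2) + k$ simplify to $w\partial_w - k$ and $w\partial_w + k$, respectively. Using parts (1) and (2) of the lemma, together with the identities $(v + iu)^3 = -(-v - iu)^3$ and $(v - iu)^3 = -(-v + iu)^3$, the inner products $\langle\omega, m\cdot(v\pm iu)^3\rangle$ become explicit multiples of $wy^{-3/2}p_\chi(z)$ and $wy^{-3/2}p_\chi(z^*)$. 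Plugging these in and absorbing the remaining factor of $i$ yields equations (1) and (2) of the corollary.

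For equations (3) and (4), the $w^{2n+2}$ factor plays no role in the operators $-2y\partial_y \pm 2iy\partial_x$; it only remains to re-express these derivatives in the new variables. From $\partial_x = \partial_z + \partial_{z^*}$ and $\partial_y = i(\partial_z - \partial_{z^*})$ one computes
\[
-2y\partial_y + 2iy\partial_x = 4iy\partial_{z^*}, \qquad -2y\partial_y - 2iy\partial_x = -4iy\partial_z.
\]
Parts (3) and (4) of the lemma, together with the sign identity $(-v\mp iu)^2(-v\pm iu) = -(v\pm iu)^2(v\mp iu)$, then rewrite $\langle\omega, m\cdot(v\pm iu)^2(v\mp iu)\rangle$ as $\frac{2}{3}w y^{5/2}\partial_{z}(p_\chi(z)y^{-3})$ or its complex conjugate analogue. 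Substituting into the proposition yields equations (3) and (4) of the corollary directly.

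The essential analytic content (in particular the identity $2yp'_\chi(z) + 3ip_\chi(z) = 2y^4\partial_z(p_\chi(z)y^{-3})$, which underlies the second equality in parts (3)--(4) of the lemma) has already been packaged into the lemma, so the corollary itself is a routine substitution. The only genuine obstacle is sign bookkeeping, arising from three independent sources: the convention $\omega = -(a,b/3,c/3,d)$ used to define $p_\chi$, the signs from the various cubic identities, and the factor $\partial_z y = -i/2$ that shows up in the chain rule for $\partial_z$ and $\partial_{z^*}$.
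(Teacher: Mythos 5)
Your proposal is correct and follows exactly the route the paper takes (and the only natural one): substitute $\phi_k = w^{2n+2}G_k$ to absorb the $2n+2$ shifts, invoke the preceding lemma for the pairings $\langle\omega, m\cdot(\pm v\pm iu)^{\bullet}\rangle$, and pass from $(\partial_x,\partial_y)$ to $(\partial_z,\partial_{z^*})$ via $-2y\partial_y\pm 2iy\partial_x = \pm 4iy\partial_{z^*/z}$. Your sign bookkeeping (the cubic identities, the convention on $\omega$, and $\partial_z y = -i/2$) is consistent with the stated corollary, so nothing further is needed.
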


To solve the equations of Corollary \ref{cor:Gkdifs}, we require a couple preliminaries.  First, set $u = |p_{\chi}(z)|y^{-3/2} w$.  Note that, for a function $f$,
\begin{enumerate}
\item $(4iy \partial_{z} + 3v)(y^{-3v/2}f) =y^{-3v/2}(4iy \partial_{z}f)$
\item $(4iy\partial_{z^*} + 3v)(y^{3v/2} f) = y^{3v/2}(4iy \partial_{z*} f)$
\item $\partial_{z^*}(u) = \frac{1}{2} w y^{3/2} \frac{p_\chi(z)}{|p_\chi(z)|} \partial_{z^*}(p_\chi(z^*) y^{-3})$
\item $\partial_{z}(u) = \frac{1}{2} w y^{3/2} \frac{p_{\chi}(z^*)}{|p_\chi(z)|} \partial_{z}(p_\chi(z) y^{-3})$.
\end{enumerate}

The solution to the equations of Corollary \ref{cor:Gkdifs} will involve $K$-Bessel functions.  We recall the following standard facts regarding these functions:
\begin{theorem}The $K$-Bessel functions satisfy the following identities:
\begin{enumerate}
\item $((z\partial_{z})^2 - v^2)K_v(z) = z^2K_v(z)$
\item $-z^{-v} \partial_z(z^v K_v) = K_{v-1}$
\item $-z^{v} \partial_{z}(z^{-v} K_v) = K_{v+1}$
\item $-(z \partial_{z}-v)K_v = zK_{v+1}$
\item $-(z\partial_z+v)K_v = zK_{v-1}$.\end{enumerate}
\end{theorem}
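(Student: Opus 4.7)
The statement is a collection of classical identities for the modified Bessel function $K_v$, so the quickest route is to cite a standard reference such as Watson's \emph{Treatise on the Theory of Bessel Functions}. For the sake of self-containedness, however, I would sketch an elementary derivation along the lines below.

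First, I would observe that the five identities are not independent. Expanding the product rule on the left of (2) gives
\[
-z^{-v}\partial_z(z^v K_v) \;=\; -\tfrac{v}{z} K_v - K_v' \;=\; -\tfrac{1}{z}(z\partial_z + v)K_v,
\]
so (2) is just (5) divided by $z$; an analogous calculation shows that (3) is (4) divided by $z$. Thus it suffices to prove (1), (4), and (5), and in fact (4) and (5) between them are equivalent to the pair of classical recurrences $K_{v-1}+K_{v+1} = -2K_v'$ and $K_{v-1}-K_{v+1} = -\tfrac{2v}{z}K_v$ obtained by adding and subtracting them.

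For these recurrences I would work from the integral representation
\[
K_v(z) \;=\; \tfrac{1}{2}\int_0^\infty t^{v-1}\, e^{-z(t+t^{-1})/2}\, dt, \qquad \Re(z) > 0.
\]
Differentiating under the integral sign in $z$ and splitting the factor $(t+t^{-1})/2$ according to how it shifts the index $v$ yields $K_v' = -\tfrac{1}{2}(K_{v+1} + K_{v-1})$. Integrating the exact form $\frac{d}{dt}\bigl(t^v e^{-z(t+t^{-1})/2}\bigr)\,dt$ over $(0,\infty)$, whose boundary contributions vanish since the exponential decays at both ends for $\Re(z)>0$, gives $v K_v = \tfrac{z}{2}(K_{v+1} - K_{v-1})$. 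Taking the appropriate linear combinations and multiplying by $z$ recovers (4) and (5).

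Finally, identity (1) follows from these two recurrences by direct substitution (apply (4) to $K_v$, then (5) to the result, or vice versa), and alternatively one can verify it by differentiating the integral representation twice and doing one integration by parts. The main obstacle is essentially nonexistent; the only items requiring care are bookkeeping of the index shifts coming from the factors $t$ and $t^{-1}$, and the justification that the boundary terms in the integration by parts vanish, which is immediate in the half-plane $\Re(z) > 0$ where $K_v$ is defined.
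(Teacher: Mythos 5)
Your proposal is correct. The paper itself offers no proof of this theorem --- it simply records these as ``standard facts'' about the $K$-Bessel functions --- so citing a reference such as Watson is exactly in the spirit of the original. Your supplementary sketch is also sound: the reduction of (2) and (3) to (5) and (4) by the product rule, the derivation of the two classical recurrences $K_{v-1}+K_{v+1}=-2K_v'$ and $K_{v+1}-K_{v-1}=\tfrac{2v}{z}K_v$ from the integral representation (one by differentiating under the integral, one by integrating an exact form with vanishing boundary terms), and the observation that (1) follows by composing (4) with (5) all check out.
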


We now come to the proof of the explicit formula.
\begin{proof}[Proof of Theorem \ref{thm:FE}] Applying the differential equations in $w$, one gets
\begin{align*}(w \partial_w + v)(w \partial_w - v)G_{v} &= (w\partial_{w}+v) \left(- w y^{-3/2} p_{\chi}(z) G_{v+1}\right) \\ &=- w y^{-3/2} p_{\chi}(z) (w\partial_{w}+v+1) \left( G_{v+1}\right) \\ &= (w y^{-3/2} p_{\chi}(z)) w y^{-3/2} p_\chi(z^*) G_v = u^2 G_{v}.\end{align*}
It follows that $G_v$ is a Bessel function in $w$.  More precisely, since $G_v$ cannot grow exponentially as $w \rightarrow \infty$, $G_v = K_v(u)Y_v(z,z^*)$ for some function $Y_v$ of $z,z^*$.

Now, using this, one gets
\[ w p_\chi(z) y^{-3/2} Y_{v+1}(z,z^*) K_{v+1}(u) = -(w \partial_{w}-v)G_{v} = |p_\chi(z)| y^{-3/2} w Y_v(z,z^*)K_{v+1}(u).\]
Hence $Y_{v+1} = \frac{|p_\chi(z)|}{p_\chi(z)} Y_v(z,z^*) = \frac{p_\chi(z)^*}{|p_\chi(z)|} Y_v(z,z^*)$.  One concludes that, on an open set where $p_{\chi}(z) \neq 0$, $Y_v(z,z^*) = \left(\frac{|p_\chi(z)|}{p_\chi(z)}\right)^{v} Y_0(z,z^*)$.

It remains to understand the function $Y_0$.  To do so, one applies the differential equations with $\partial_z$ and $\partial_{z^*}$ to show $\partial_{z}Y_0 = \partial_{z^*}Y_0 = 0$.  Thus, $Y_0$ is constant.  It follows that on an open set where $p_{\chi}(z) \neq 0$, one obtains that $\phi_k$ is proportional to $\left(\frac{|p_\chi(z)|}{p_\chi(z)}\right)^{v} w^{2n+2} K_v( |p_{\chi}(z)|y^{-3/2} w)$.  Note that if $w$ stays constant but $p_{\chi}(z) \rightarrow 0$, this function diverges.  Hence, in order for there to be any nonzero solutions to the equations $\mathcal{D}_n \W^\chi = 0$, we require that $p_{\chi}(z)$ is never $0$ on the upper half-plane $\mathcal{H}_1$.

One obtains the exact expression for the functions $\W_w(m)$ by imposing the $K$-equivariance $\W_w(g k) = k^{-1} \W_w(g)$.  Finally, one can check that the solutions found really do satisfy all the differential equations. \end{proof} 

\begin{remark} The reader wishing to analyze the constant term of modular forms on $G_2$ must simply take $\omega= 0$ in Proposition \ref{difEqs1}.  Substantial simplification occurs immediately.\end{remark}

\section{The Rankin-Selberg integral}\label{sec:RS} In this section we prove Theorem \ref{thm:DS}.  In fact, this Dirichlet series is more-or-less equivalent to the unramified computation of the Rankin-Selberg integrals in \cite{gurevichSegal,segal}, so we also recall these integrals.

\subsection{The global integral} We now recall the global Rankin-Selberg integrals of \cite{gurevichSegal} and \cite{segal}.  Fix $F$ a number field, and set $G = G_2$ the split exceptional group over $F$. The global integral depends on an auxiliary cubic \'etale extension $E$ of $F$.  Out of $E$, one can define a reductive $F$-group $G_E$ which is of type $D_4$, and for which there is a natural inclusion $G \subseteq G_E$.

\subsubsection{The group $G_E$} More precisely, first suppose $E = F \times F \times F$ is split, and $\Theta$ denotes the split octonions over $F$.  Denote by $(\cdot,\cdot,\cdot)$ the trilinear form on $\Theta$ given by $(x_1,x_2,x_3) = \tr((x_1x_2)x_3) = \tr(x_1(x_2x_3))$.  Then $G_E^{sc} = \Spin(8)$ is the group of triples of automorphisms $g = (g_1, g_2, g_3) \in \SO(\Theta)^3$ satisfying $(g_1x_1,g_2x_2,g_3x_3) = (x_1,x_2,x_3)$ for all $x_1,x_2,x_3 \in \Theta$.  The group $G = G_2$ embeds in diagonally $G_E^{sc}$ as $g \mapsto (g,g,g)$.

For a general cubic \'etale extension $E$ of $F$, one defines $G_E^{sc}$ as follows.  The $F$-vector space $E\otimes_{F} \Theta$ has a unique linear form $\ell$ that is the descent of the linear form $(\cdot, \cdot, \cdot): (F \times F \times F) \otimes_{F} \Theta \rightarrow F$ defined using the trace on $\Theta$.  The form $\ell$ satisfies $\ell(x v) = N_{E/F}(x) \ell(v)$ for all $x \in E$ and $v \in E \otimes \Theta$.  Then one defines $G_E$ to consist of the $E$-linear automorphisms $g$ of $E \otimes \Theta$ in $\SO_{E}(E \otimes \Theta)$ satisfying $\ell(g v) = \ell(v)$ for all $v \in E\otimes \Theta$.  The group $G = G_2$ embeds in $G_E^{sc}$ as the $E$-linear extension of $F$-automorphisms of $\Theta$.

Alternatively, instead of the using a simply-connected group of type $D_4$, one could use the adjoint group of type $D_4$ to define the Rankin-Selberg integral.  For this, one considers $\g_E = \sl_3 \oplus E^{\tr = 0} \oplus (V_3 \otimes E) \oplus (V_3 \otimes E)^\vee$.  One can define the structure of a Lie algebra on $\g_E$, for which $\sl_3 \oplus E^{\tr = 0}$, $(V_3 \otimes E)$, and $(V_3 \otimes E)^\vee$ are the pieces of a $\Z/3$-grading.  See \cite[section 1]{rumelhart}, or also \cite[section 4]{pollackQDS}.  One then defines $G_E^{ad}$ as the adjoint group associated to this Lie algebra.  The Lie algebra of $\g$ of $G=G_2$ embeds in $\g_E$ as $\sl_3 \oplus V_3 \oplus V_3^\vee$.  This embedding yields an action of $\g$ on $\g_E$, which gives rise to a map $G \rightarrow G_E^{ad}$.  (The group $G_2$ is simply-connected and adjoint.)  In fact, denote by $E^0$ the elements of $E$ with trace equal to $0$.  Then one has an isomorphism of $\g_2$-modules $\g_E = \g_2 \oplus V_7 \otimes E^0$ with $E^0$ a trivial $\g_2$-module.  Here the map $V_7\otimes E^0 \rightarrow \g_E$ is given as follows: for $j \in \{1,2,3\}$ and $\epsilon \in E^0$, $e_j \otimes \epsilon \mapsto -v_j \otimes \epsilon$, $e_j^* \otimes \epsilon \mapsto \delta_j \otimes \epsilon$, and $u_0 \otimes \epsilon \mapsto \Phi_{\epsilon,1}$ in the notation of \cite[section 3]{pollackLL}.

Now set $G_E = G_E^{sc}$ or $G_E^{ad}$.  The Rankin-Selberg integral is formed by defining a degenerate Eisenstein series on $G_E$ for the Heisenberg parabolic, and pulling it back to $G$.  Because we have $G \rightarrow G_E^{sc} \rightarrow G_E^{ad}$, and this Eisenstein series on $G_E^{sc}$ is the pullback of the degenerate Eisenstein series on $G_E^{ad}$, one can use either $G_E^{sc}$ or $G_E^{ad}$ to define the global integral. We have described both the simply-connected and the adjoint form of the group $G_E$, because both are convenient for calculating with this integral: The simply-connected group is convenient for unfolding the integral and doing orbit calculations, while the adjoint group is useful for doing certain calculations once the integral is unfolded.

\subsubsection{The global integral} Let us now define the global integral precisely.  Define $P_E$ to be the subgroup of $G_E$ preserving the line spanned by $E_{13} \in \sl_3 \subseteq \g_E$.  Then $P_E$ is the Heisenberg parabolic subgroup of $G_E$, and the action of $P_E$ on $E_{13}$ gives the similitude character $\nu: P_E\rightarrow \GL_1$, $Ad(p) E_{13} = \nu(p) E_{13}$.  The modulus character of $P_E$ is $\delta_{P_E}(p) = |\nu(p)|^{5}$.

The Eisenstein series on $G_E$ is defined as follows.  Suppose that $\Phi$ is a Schwartz-Bruhat function on $\g_E \otimes \A$.  Define
\[f(g,\Phi,s) = \int_{\GL_1(\A)}{\Phi(t Ad(g)^{-1} E_{13})|t|^{s}\,dt}.\]
If $p \in P_E$, then $f(pg,\Phi,s) = |\nu(p)|^{s} f(g,\Phi,s)$.  The Eisenstein series is
\[E(g,\Phi,s) = \sum_{\gamma \in P_E(F)\backslash G_E(F)}{f(\gamma g,\Phi,s)}.\]
Defining $f(g,\Phi,s)$ as an integral of a Schwartz-Bruhat function $\Phi$ on $\g_E$ is one of the modifications we make to the computation in \cite{gurevichSegal,segal}.  The use of $\Phi$ will lead to simplifications later.

Suppose that $\pi$ is a cuspidal automorphic representation on $G(\A)$, and $\varphi$ is a cusp form in the space of $\pi$.  The global integral is
\[I(\varphi,\Phi,s) = \int_{G(F)\backslash G(\A)}{\varphi(g)E(g,\Phi,s)\,dg}.\]
With our normalizations, the main results of \cite{gurevichSegal} and \cite{segal} imply that this integral represents the ratio of partial $L$-functions $\frac{L^{S}(\pi,Std,s-2)}{L^{S}(E,s-1)\zeta_{F}^{S}(2s-4)}$.

\subsection{Embedding in $\mathrm{SO}(7)$}\label{subsec:SO7} Before unfolding the global integral $I(\varphi,\Phi,s)$, we dispense with a preliminary that we will need below.  Namely, we describe explicitly how the Heisenberg parabolic subgroup $P = MN$ of $G_2$ embeds in $\SO(7)$. Of course, the $\SO(7)$ is $\SO(V_7)$, with quadratic form coming from the norm on the octonions.

Fix the ordered basis
\[e_1, e_3^*, e_2^*, u_0, -e_2, -e_1^*, -e_3\]
of $V_7$.  With this ordered basis, the Gram matrix for the bilinear form $(\cdot, \cdot)$ on $V_7$ is
\[\left(\begin{array}{ccc} & & 1_2 \\ & S & \\ 1_2 &&\end{array}\right) \text{ with } S = \left(\begin{array}{ccc} & & 1 \\ & -2 & \\ 1 \end{array}\right).\]
Denote by $V_2$ the defining representation of $\GL_2 \simeq M$.  Then, as a representation of $\GL_2$, we have $V_7 \simeq V_2 \oplus M_2^{\tr=0} \oplus V_2^\vee$. The quadratic form $S$ on $M_2^{\tr  = 0}$ is given by the determinant.  Put another way, one embeds $\GL_2 \simeq M$ into $\SO(V_7)$ as $g \mapsto \diag(g, Ad^0(g), \,^tg^{-1})$.  Here $Ad^0(g)$ is the map $M_2^{\tr=0} \rightarrow M_2^{\tr=0}$ given by $x \mapsto gxg^{-1}$.  In this setup, the identification $\mathrm{Span}(e_2^*,u_0,e_2)$ with $M_2^{\tr = 0}$ is given by $e_2^* \mapsto \mm{0}{-1}{0}{0}$, $u_0 \mapsto \mm{1}{0}{0}{-1}$, and $e_2 \mapsto \mm{0}{0}{-1}{0}$.\\

\noindent \textbf{Caution}: Our identification $M \simeq \GL_2$ is \textbf{not} the same identification that was described in sections \ref{sec:G2andOct} or \ref{sec:genWhit}.  The identification $M \simeq \GL_2$ just described differs from the previous ones by a conjugation by the matrix $\mm{}{1}{-1}{}$.  We will use this new identification of $M$ with $\GL_2$ for the rest of this section.

Now, if one associates to the element $X = u_{1} E_{12} + \frac{u_2}{3} v_1 + \frac{u_3}{3}\delta_3 + u_4 E_{23}$ the polynomial $f_{X}(x,y) = u_1 x^3 + u_2 x^2 y + u_3 xy^2 + u_4y^3$, and if $g \in \GL_2 \simeq M$, then $f_{g \cdot X}(x,y) = \det(g)^{-1}f((x,y)g)$.

\begin{claim} Write $(\alpha,\beta,\gamma,\delta)$ for the element $\alpha E_{12} +  \beta v_1 + \gamma \delta_3 + \delta E_{23}$ of $\g$.  The Heisenberg Lie algebra $\n$ is embedded as 
\[ \mu E_{13} +(\alpha,\beta,\gamma,\delta) \mapsto \left(\begin{array}{ccc} 0 & h' & -\mu J_2 \\ 0 & 0 & h \\ 0 & 0 & 0 \end{array}\right) \text{ with } h' = -\,^th S\]
and $h = \left(\begin{array}{cc} \alpha & \beta \\ \beta & \gamma \\ \gamma & \delta \end{array}\right)$.  Here $J_2 = \mm{}{1}{-1}{}$.\end{claim}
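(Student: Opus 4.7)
The plan is a direct computation with $(w\wedge x)\cdot v = (x,v)w - (w,v)x$ on the ordered basis $e_1,e_3^*,e_2^*,u_0,-e_2,-e_1^*,-e_3$, organized by the Heisenberg filtration so as to avoid writing out a full $7\times 7$ matrix five times.

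The first step is structural. Since $P$ stabilizes the isotropic $2$-plane $V_2:=\mathrm{Span}(e_1,e_3^*)$, the unipotent radical $\n$ preserves the filtration $0\subset V_2\subset V_2^\perp\subset V_7$ and acts trivially on the graded pieces; with our basis $V_2^\perp = V_2 \oplus \mathrm{Span}(e_2^*,u_0,-e_2)$, so every $X\in\n$ is automatically of strict block-upper-triangular shape with blocks of sizes $(2,3,2)$, with three potentially nonzero blocks: top-right, top-middle, and middle-right. Because moreover $\n\subset\mathfrak{so}(V_7,G)$, a short block calculation with the relation ${}^tXG+GX=0$ (using the displayed Gram matrix $G$) forces the top-middle block to equal $-{}^tF\cdot S$, where $F$ is the middle-right block, and forces the top-right block to be antisymmetric, hence of the form $-\mu' J_2$ for a scalar $\mu'$. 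This already matches the shape stated in the claim; all that remains is to identify $F$ with $h$ and $\mu'$ with $\mu$.

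For the middle-right block, I would compute $X\cdot(-e_1^*)$ and $X\cdot(-e_3)$ modulo $V_2$ for each of $X\in\{E_{12},v_1,\delta_3,E_{23}\}$, using the wedge formula together with the pairings $(e_i,e_j^*)=-\delta_{ij}$ and $(u_0,u_0)=-2$. These are five two-line calculations, and they yield $X\cdot(-e_1^*)\equiv \alpha e_2^* + \beta u_0 + \gamma(-e_2)\pmod{V_2}$ and $X\cdot(-e_3)\equiv \beta e_2^* + \gamma u_0 + \delta(-e_2)\pmod{V_2}$ for $X=\alpha E_{12}+\beta v_1+\gamma \delta_3+\delta E_{23}$; reading off coefficients in the basis $(e_2^*,u_0,-e_2)$ of $V_2^\perp/V_2$ gives exactly $h$. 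For the top-right block one computes directly $E_{13}(-e_1^*)=e_3^*$ and $E_{13}(-e_3)=-e_1$, so $\mu E_{13}$ contributes the block $\mu\mm{0}{-1}{1}{0}=-\mu J_2$, while the other four generators contribute nothing there because their images already lie in $V_2^\perp$. No step is a real obstacle; the only pitfall is the sign bookkeeping imposed by the minus signs decorating $e_2,e_1^*,e_3$ in the chosen ordered basis.
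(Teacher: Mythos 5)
Your proposal is correct and is essentially the paper's own argument: the heart of both is evaluating the generators, written as elements of $\wedge^2 V_7$ via $(w\wedge x)\cdot v=(x,v)w-(w,v)x$, on the basis vectors $-e_1^*$ and $-e_3$ to read off the block $h$ (and $-\mu J_2$ for $E_{13}$). Your structural preamble --- strict block-triangularity from the filtration $0\subset V_2\subset V_2^\perp\subset V_7$ plus the relation ${}^tXG+GX=0$ forcing $h'=-{}^thS$ and antisymmetry of the corner block --- is just an explicit justification of the step the paper compresses into ``The claim follows.''
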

\begin{proof} Recall that $v_1 = u_0 \wedge e_1 + e_2^* \wedge e_3^*$ and $\delta_3 = u_0 \wedge e_3^* + e_1 \wedge e_2$. Thus
\[ \alpha E_{12} + \beta v_1 + \gamma \delta_3 + \delta E_{23} = \alpha e_2^* \wedge e_1 + \beta(u_0 \wedge e_1 + e_2^* \wedge e_3^*) + \gamma(u_0 \wedge e_3^* + e_1 \wedge e_2) + \delta e_3^* \wedge e_2.\]
Evaluating this Lie algebra element on $-e_1^*$ and $-e_3$ we get
\[\left(\alpha E_{12} + \beta v_1 + \gamma \delta_3 + \delta E_{23}\right)(-e_1^*) = \alpha e_2^* + \beta u_0 + \gamma(-e_2)\]
and
\[\left(\alpha E_{12} + \beta v_1 + \gamma \delta_3 + \delta E_{23}\right)(-e_3) = \beta e_2^* + \gamma u_0 + \delta(-e_2).\]
The claim follows. \end{proof}

The Heisenberg subgroup $N$ consists of elements of the form
\[\left(\begin{array}{ccc} 1_2 & h' & x \\ & 1_3 & h \\ &&1_2 \end{array}\right) \text{ with } \,^thSh + x + \,^tx = 0.\]

\subsubsection{Unfolding} For the detailed unfolding of the integral $I(\varphi,\Phi,s)$, we refer the reader to \cite[Theorem 3.2]{segal}.  We will content ourselves with just indicating the lines of the proof, because we state the unfolding a bit differently than it appears in \emph{loc cit}.

As preparation, denote by $\mathcal{O}_E$ the maximal order in $E$, and recall from subsection \ref{subsec:cubicRingsFCs} the notion of a good basis of $\mathcal{O}_{E}$.  Suppose $1,\omega,\theta$ is a good basis of $\mathcal{O}_E$, with multiplication table as in subsection \ref{subsec:cubicRingsFCs} with the constants $a,b,c,d$.  Set 
\[v_{E} = a E_{12} + \frac{b}{3} v_1 + \frac{c}{3} \delta_3 + d E_{23} \in \g = \sl_3 \oplus V_3 \oplus V_3^\vee\]
and
\[\widetilde{v_E} = a E_{12} - v_1 \otimes \omega + \delta_3 \otimes \theta  + d E_{23} \in \g_E = (\sl_3 \oplus E^{\tr=0}) \oplus (V_3 \otimes E) \oplus (V_3^\vee \otimes E).\]
The element $\widetilde{v_E}$ is the ``rank one lift'' of the element $v_E$ in the sense of \cite[Section 2.3]{pollackLL}.  

One can show that $\widetilde{v_E}$ is in the $G_E(F)$-orbit of $E_{13}$; say $\gamma_0^{-1} E_{13} = \widetilde{v_E}$.  In fact, one has
\[ \exp\left(ad\left(-aE_{32} + v_3 \otimes \omega + \delta_1 \otimes \theta + d E_{21}\right)\right)(E_{13}) = E_{13} + \widetilde{v_E}\]
and from this the existence of $\gamma_0$ follows easily.  Finally, define $N^{0,E} \subseteq N\subseteq G_2 = G$ the subgroup of the unipotent radical of the Heisenberg parabolic $N$ consisting of those $n$ with $\langle v_E, \overline{n}\rangle = 0$.  Here $\overline{n}$ is the image of $n$ in $N^{ab} \simeq W$ and $\langle \cdot, \cdot \rangle$ is the symplectic form on $W$.

The global integral unfolds as follows.  For a cusp form $\varphi$ on $G$, and a character $\chi$ on $N(\Q)\backslash N(\A)$, define 
\[\varphi_\chi(g)=\int_{N(\Q)\backslash N(\A)}{\chi^{-1}(n)\varphi(ng)\,dn}.\]
\begin{theorem}[Gurevich-Segal, Segal]\label{thm:RSunfold} Fix an additive character $\psi: F \backslash \A \rightarrow \C^\times$, and define a character $\chi = \chi_E$ on $N$ as $\chi(n) = \psi(\langle v_E, \overline{n}\rangle)$.  One has
\begin{align*} I(\varphi,\Phi,s) &= \int_{N^{0,E}(\A)\backslash G(\A)}{f(\gamma_0 g,\Phi,s)\varphi_{\chi}(g)\,dg} \\ &=  \int_{N^{0,E}(\A)\backslash \GL_1(\A) \times G(\A)}{|t|^{s}\Phi(t g^{-1} \widetilde{v_E})\varphi_\chi(g)\,dg} \\ &= \int_{N(\A) \backslash \GL_1(\A) \times G(\A)}{|t|^{s}\Phi_\chi(t,g)\varphi_\chi(g)\,dg} \end{align*}
where
\[\Phi_{\chi}(t,g) = \int_{N^{0,E}(\A)\backslash N(\A)}{\chi(n)\Phi(t g^{-1} n^{-1} \widetilde{v_E})\,dn}.\]
\end{theorem}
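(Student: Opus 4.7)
This is a standard Rankin--Selberg unfolding in the spirit of \cite{gurevichSegal,segal}, with the minor simplification that $f(g,\Phi,s)$ is defined via a Schwartz--Bruhat function. The plan is as follows. For $\mathrm{Re}(s)$ sufficiently large, substitute the definition of $E(g,\Phi,s)$ into $I(\varphi,\Phi,s)$ and interchange sum and integral to obtain
\[I(\varphi,\Phi,s)=\sum_{\gamma\in P_E(F)\backslash G_E(F)/G(F)}\int_{(\gamma^{-1}P_E\gamma\cap G)(F)\backslash G(\A)}f(\gamma g,\Phi,s)\varphi(g)\,dg.\]
Since $P_E$ is the stabilizer of $F\cdot E_{13}\subset\g_E$, these double cosets correspond to $G(F)$-orbits on $G_E(F)\cdot(F\cdot E_{13})$ inside $\g_E(F)$. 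The lifting law of \cite{pollackLL}---equivalently, the explicit exponential identity displayed above---places $F\cdot\widetilde{v_E}$ in this $G_E(F)$-orbit and yields $\gamma_0$ with $\gamma_0^{-1}E_{13}=\widetilde{v_E}$ as a representative of the ``open'' orbit attached to $E$.

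The next task is to classify the remaining $G(F)$-orbits and show each contributes zero against a cusp form $\varphi$. For each non-open representative $\gamma$, the stabilizer $\gamma^{-1}P_E\gamma\cap G$ contains a unipotent subgroup large enough that the inner integral vanishes by cuspidality; this orbit-by-orbit analysis is carried out in \cite[\S 3]{segal} and is the main obstacle in the proof. For the open orbit, a direct bracket computation in $\g_E$ identifies the stabilizer $\gamma_0^{-1}P_E\gamma_0\cap G$ as exactly $N^{0,E}$, yielding
\[I(\varphi,\Phi,s)=\int_{N^{0,E}(F)\backslash G(\A)}f(\gamma_0 g,\Phi,s)\varphi(g)\,dg.\]

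To arrive at the first identity of the theorem, observe that $g\mapsto f(\gamma_0 g,\Phi,s)$ is left-invariant under $N^{0,E}(\A)$, so one can replace $\varphi$ by its $N^{0,E}$-constant term and integrate over $N^{0,E}(\A)\backslash G(\A)$. Fourier-expand this constant term along the one-dimensional quotient $N(F)N^{0,E}(\A)\backslash N(\A)\simeq F\backslash\A$: the trivial character contributes the full $N$-constant term of $\varphi$ (zero by cuspidality), while each $\lambda\in F^\times$ contributes $\varphi_{\chi_{\lambda v_E}}$. Using the center element $\lambda I\in M(F)\simeq\GL_2(F)$, which scales $v_E\in W$ by $\lambda$, convert each $\varphi_{\chi_{\lambda v_E}}$ into $\varphi_\chi$ by a change of variables and absorb the resulting power of $|\lambda|$ into the $\GL_1(\A)$-integration in $f$; this folds $F^\times\backslash\A^\times$ back into $\A^\times$. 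The second identity is immediate from $\gamma_0^{-1}E_{13}=\widetilde{v_E}$ and the definition of $f$. For the third, factor $\int_{N^{0,E}(\A)\backslash G(\A)}=\int_{N(\A)\backslash G(\A)}\int_{N^{0,E}(\A)\backslash N(\A)}$ and use $\varphi_\chi(ng)=\chi(n)\varphi_\chi(g)$ for $n\in N(\A)$ to pull $\varphi_\chi$ outside the inner $n$-integration; the latter then becomes $\Phi_\chi(t,g)$ by definition.
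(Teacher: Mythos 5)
Your overall architecture is the same as the paper's: unfold the Eisenstein series over $P_E(F)\backslash G_E(F)/G(F)$, identify the relevant coset via the lifting law $\gamma_0^{-1}E_{13}=\widetilde{v_E}$, discard the remaining orbits by cuspidality (deferring, as the paper does, to \cite{gurevichSegal,segal}), and then Fourier-expand along $N/N^{0,E}$. However, there is a genuine error in the middle: the stabilizer $\gamma_0^{-1}P_E\gamma_0\cap G$ is \emph{not} $N^{0,E}$. Since $P_E$ is defined as the stabilizer of the \emph{line} $F\cdot E_{13}$ (not the vector), $\gamma_0^{-1}P_E\gamma_0\cap G$ is the stabilizer in $G$ of the line $F\widetilde{v_E}$. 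This contains, besides $N^{0,E}$, at least the one-dimensional central torus $Z(M)\cong\GL_1$ of the Heisenberg Levi: the scalar $\lambda I$ scales every component of $v_E$ (hence of $\widetilde{v_E}$) by $\lambda$, exactly as you yourself use two sentences later. (It also contains the finite stabilizer of the binary cubic $v_E$, i.e.\ roughly $\mathrm{Aut}(E/F)$.) Consequently your displayed intermediate identity
\[
I(\varphi,\Phi,s)=\int_{N^{0,E}(F)\backslash G(\A)}f(\gamma_0 g,\Phi,s)\varphi(g)\,dg
\]
is false as written: the integrand is left-invariant under the strictly larger group $Z(M)(F)N^{0,E}(F)$, so this integral overcounts by the infinite group $F^\times$ and diverges. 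Equivalently, after your Fourier expansion the sum $\sum_{\lambda\in F^\times}\varphi_{\chi_{\lambda v_E}}$ would produce infinitely many identical contributions with no $F^\times$-quotient left to absorb them.

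The tell is your own phrase about ``folding $F^\times\backslash\A^\times$ back into $\A^\times$'': that $F^\times$ has to come from somewhere, and it comes precisely from the $Z(M)(F)$ sitting inside the line-stabilizer. So the fix is purely a matter of starting from the correct quotient: the open-orbit contribution is $\int_{H(F)\backslash G(\A)}f(\gamma_0 g,\Phi,s)\varphi(g)\,dg$ with $H$ the full line-stabilizer (an extension of $\GL_1$ by a finite group, times $N^{0,E}$); then your Fourier expansion along $N(F)N^{0,E}(\A)\backslash N(\A)\simeq F\backslash\A$ kills the trivial character by cuspidality, the action of $Z(M)(F)\cong F^\times\subset H(F)$ collapses the sum over $\lambda\in F^\times$ to the single term $\varphi_\chi$, and the $\GL_1(\A)$-integral in $f$ reassembles. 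With that correction your argument matches the paper's (which is itself terse here, saying only that the stabilizer ``can be computed without much difficulty''). The second and third identities are handled correctly.
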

\begin{proof} Denote by $N_E$ the unipotent radical of the Heisenberg parabolic $P_E$.  The element $\widetilde{v_E}$ is ``rank one'' (see \cite[section 2.3]{pollackLL}) in the abelianization $N_E^{ab}$ of $N_E$.  As mentioned above, this fact is equivalent to the fact that $\widetilde{v_E}$ is in the $G_E(F)$-orbit of $E_{13}$, which is the minimal orbit of $G_E$ on its Lie algebra $\g_E$.  The line $F \widetilde{v_E}$ gives rise to a double coset in $P_E(F) \backslash G_E(F) \slash G_2(F)$, represented by $\gamma_0$.  One obtains a contribution to $I(\varphi,\Phi,s)$ of the form 
\[\int_{\left(G_2(F) \cap \gamma_0^{-1}P_E(F) \gamma_0\right)\backslash G_2(\A)}{\varphi(g) f(\gamma_0 g,\Phi,s)\,dg}.\]
The group $G_2(F) \cap \gamma_0^{-1}P_E(F) \gamma_0$, which is the stabilizer of the line $F \widetilde{v_E}$ in $G_2(F)$, can be computed without much difficulty, using the explicit decomposition of $\g_E$ as a $G_2$-module described above.  One obtains the integral as in the statement of the theorem.

The other double cosets in $P_E(F)\backslash G_E(F) \slash G_2(F)$ give rise to integrals that vanish because $\varphi$ is a cusp form; see \cite{gurevichSegal} and \cite{segal} for this.\end{proof}

We will begin the unramified computation of this integral in the next subsection.  By defining the Eisenstein section $f(g,\Phi,s)$ through the Schwartz-Bruhat function $\Phi$, we were able to write the unfolding integral in terms of $\Phi$ and the element $\widetilde{v_E}$.  This small tweak on \cite{gurevichSegal,segal} will enable us to simplify some the computations of \emph{loc cit}.

\begin{remark} The Dirichlet series of Theorem \ref{thm:DS}, the arithmetic invariant theory of cubic rings, the Rankin-Selberg integral of Theorem \ref{thm:RSunfold}, and the lifting law of \cite[section 2.3]{pollackLL} are all tied together.  Namely, out of the cubic ring $\mathcal{O}_E$, one constructs the group $G_E$ and the Rankin-Selberg integral.  This Rankin integral yields the standard $L$-function on $G_2$, whose Dirichlet series, given in Theorem \ref{thm:DS}, is most neatly expressed in terms of the arithmetic invariant theory of cubic rings and binary cubic forms.  Finally, the lifting law $v_E \rightsquigarrow \widetilde{v_E}$ controls the Rankin integral, relating the Eisenstein series on $G_E$ to the cusp form on $G$.  It also controls the arithmetic invariant theory of cubic rings; see \cite{pollackLL}.\end{remark}

\subsection{The unramified computation: Overview} In this subsection we give an overview of the proof that the integral of Theorem \ref{thm:RSunfold} represents the partial standard $L$-function $L^S(\pi,Std,s)$.  The technique to evaluate $I(\varphi,\Phi,s)$, following \cite{gurevichSegal,segal}, is that of ``non-unique models'', due to Piatetski-Shapiro and Rallis \cite{psRallisNewWay}.  Ultimately, it boils down to expressing the Fourier coefficients of automorphic forms in terms of Hecke eigenvalues.

For the rest of this section, everything is local at a finite prime unless explicitly mentioned to the contrary.  Thus $F$ denotes a $p$-adic local field, $\mathcal{O}$ its ring of integers with uniformizer $p$, and $E$ is a cubic \'etale extension of $F$. Furthermore, we assume that $p$ is prime to $2,3$ and that $\mathcal{O}_E$ is an unramified extension of $\mathcal{O}$. Here is the statement of the unramified computation.  Define the lattice
\[\g_E(\mathcal{O}_E)= (\sl_3(\mathcal{O}_E) \oplus \mathcal{O}_E^{\tr=0}) \oplus (V_3(\Z) \otimes \mathcal{O}_E) \oplus (V_3(\Z)^\vee \otimes \mathcal{O}_E).\]
\begin{theorem}\label{thm:unram} Suppose the representation $\pi_p$ is unramified, and denote by $V_p$ the space of $\pi_p$.  Suppose that $v_0$ is a spherical vector in $V_p$, and $L: V_{p} \rightarrow \C$ is a linear functional satisfying $L(  n v) = \chi(n) L(v)$ for all $n\in N(F)$ and $v \in V_{p}$.  Furthermore, suppose that $\Phi$ is the characteristic function of $\g_E(\mathcal{O}_E)$. Then
\begin{equation}\label{eqn:locUR}\int_{N(F)\backslash \GL_1(F) \times G(F)}{|t|^{s}\Phi_{\chi}(t,g) L( g v_0)\,dg} = L(v_0) \frac{L(\pi,Std,s-2)}{L(E,s-1)\zeta_{F}(2s-4)}.\end{equation}
\end{theorem}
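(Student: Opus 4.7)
My plan is to follow the non-unique-model method of Piatetski-Shapiro–Rallis, as adapted to $G_2$ in \cite{gurevichSegal,segal}. The first step is to perform an Iwasawa decomposition $G(F)=N(F)M(F)K$ with $K=G(\mathcal{O})$ hyperspecial. Since $\Phi$ is $K$-invariant and $v_0$ is $K$-spherical, the left-hand side of (\ref{eqn:locUR}) collapses to
\[\int_{\GL_1(F)\times M(F)/M(\mathcal{O})}{|t|^s\,\Phi_\chi(t,m)\,L(mv_0)\,\delta_P(m)^{-1}\,dt\,dm,}\]
reducing the problem to explicit orbit analysis on the $\GL_2$-Levi $M$ of the Heisenberg parabolic.

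Next I would compute $\Phi_\chi(t,m)$ explicitly. Expanding $m^{-1}\widetilde{v_E}$ in the $\g_2$-equivariant decomposition $\g_E=\g_2\oplus V_7\otimes E^0$ and carrying out the integration over $N^{0,E}(F)\backslash N(F)$, the $\chi$-oscillation combined with the integrality constraint $\Phi=\mathbf{1}_{\g_E(\mathcal{O}_E)}$ cut out, via the lifting-law bijection of subsection \ref{subsec:cubicRingsFCs} and \cite[Section 2.3]{pollackLL}, a union of $M(\mathcal{O})$-cosets parametrized by pairs $(T,n)$ with $T\subseteq\mathcal{O}_E$ a cubic suborder and $n\in\mathcal{O}\setminus\{0\}$ a conductor. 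On each such coset $\Phi_\chi(t,m)$ is a monomial in $|t|$, $|\det m|$, and $[\mathcal{O}_E:T]$; performing the $t$-integral then turns the left-hand side of (\ref{eqn:locUR}) into a double Dirichlet series whose $(T,n)$-coefficient is a value $L(m_{T,n}v_0)$ attached to a distinguished representative $m_{T,n}$.

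The main obstacle, and the heart of the proof, is the non-unique-model identity relating this Dirichlet series to $L(\pi,\mathrm{Std},s-2)$. Following \cite{segal}, one selects the $m_{T,n}$ compatibly with the spherical Hecke algebra at $p$ so that the values $L(m_{T,n}v_0)$ satisfy a Hecke recursion whose characteristic polynomial coincides with the reciprocal of the local standard $L$-factor; summing the recursion produces the Euler product $L(\pi,\mathrm{Std},s-2)$. The denominator factors $L(E,s-1)\zeta_F(2s-4)$ appear as multiplicative correction terms from the combinatorics of the sum over $(T,n)$ and the normalization of $\Phi$ on $\g_E(\mathcal{O}_E)$. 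The proportionality constant $L(v_0)$ is forced by specializing to $(T,n)=(\mathcal{O}_E,1)$. The essential work lies in establishing the Hecke recursion for the spherical functional, which requires a Cayley–Hamilton-type identity for the Satake class of $\pi_p$ acting in the seven-dimensional standard representation; this is where the structure of $\mathrm{Std}:G_2^\vee\to\GL_7$ plays its role, and where the absence of an abstract multiplicity-one statement for $L$ forces one to replace uniqueness by the orbit/Hecke bookkeeping above.
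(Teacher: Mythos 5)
Your opening moves match the paper: Iwasawa decomposition using $K$-invariance of $\Phi$ and $v_0$, explicit computation of $\Phi_\chi(t,m)$ via the lifting law, and conversion of the left-hand side of (\ref{eqn:locUR}) into a Dirichlet series over cubic subrings $T\subseteq\mathcal{O}_E$ weighted by values $L(m(h)v_0)$. That part is sound and is exactly what the paper does (the lemma computing $\Phi_\chi(t,m(h))$ in terms of the condition that $T(\lambda^{-1}h)$ be a ring, leading to the sum (\ref{eqn:Isum1})).

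The gap is in your third paragraph, which is where all the work actually lives. There is no ``Hecke recursion'' satisfied by the values $L(m_{T,n}v_0)$ whose characteristic polynomial is the reciprocal $L$-factor; the values $L(m(h)v_0)$ for a non-unique functional $L$ are not determined by $\pi_p$ at all, so no such closed recursion can be established. The actual mechanism of the non-unique-model method is different: one introduces an \emph{approximate basic function} $\Delta(t,g)$ (here, the characteristic function of $t\cdot r_7(g)\in \mathrm{End}(V_7(\mathcal{O}))$) with the property that $\int |t|^{s+3}\Delta(t,g)\,\ell(gv_0)\,dg\,dt$ equals $L(\pi_p,Std,s)\,\ell(N_0*v_0)$ times elementary factors for \emph{every} linear functional $\ell$ — this is Proposition \ref{prop:approxBasic}, a statement in the spherical Hecke algebra that must be quoted or proved. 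The theorem then reduces to the identity (\ref{eqn:toProve}) equating the $\chi$-Fourier coefficient of $\Delta$ with the Hecke-corrected zeta integral, and proving that identity requires three concrete computations you have not supplied: (i) the explicit evaluation of $D_\chi(t,h)=\int_{N(F)}\chi(n)\Delta(t,nm(h))\,dn$, a nontrivial exponential-sum calculation (including the vanishing when $p^2$ divides the relevant matrix); (ii) the Gan--Gross--Savin coset formula expressing $L(m(h)S*v_0)$ as a combination of $L(m(h')v_0)$ over translated cosets $h'$; and (iii) the resulting purely combinatorial ``cubic ring identity'' (\ref{eqn:CRident1}), verified case by case according to the factorization type of the binary cubic mod $p$ using the behavior of the content $c(\Lambda')$ of index-$q$ sublattices. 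The denominator $L(E,s-1)\zeta_F(2s-4)$ and the constant $L(v_0)$ emerge only from this matching, not from normalization of $\Phi$ or from specializing to $(T,n)=(\mathcal{O}_E,1)$. As written, your proposal replaces the core of the proof with an appeal to a mechanism that does not apply here.
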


The reader can no-doubt easily convince themselves that combining Theorem \ref{thm:unram} with Theorem \ref{thm:RSunfold} shows that the global integral $I(\varphi,\Phi,s)$ represents the ratio of global partial $L$-functions $\frac{L^{S}(\pi,Std,s-2)}{L^S(E,s-1)\zeta_F^S(2s-4)}$.  The formal argument that one makes to conclude this from Theorem \ref{thm:unram} is spelled out in many places, e.g. \cite{bfgNU,gurevichSegal,pollackShahKS}.

The first step in applying the technique of ``non-unique models'' is to write down what can be called an approximate basic function. A basic function is, roughly speaking, the precise way one encodes a local $L$-function into Hecke operators, and an approximate basic function is a Hecke operator that encodes an approximation to a local $L$-function.  In practice, making computations with basic functions is very difficult, but approximate basic functions can be much easier to manipulate, which is why they are useful.

In \cite{gurevichSegal}, the authors construct an approximate basic function on $G_2$ by pulling back to $G_2$ (a well-known) approximate basic function $\Delta_{\SO(7)}$ on $\SO(V_7)$.  We make a small modification to this construction, as follows: We define an approximate basic function for $\GL_1 \times G_2$, as opposed to just $G_2$.  Namely, for $t \in \GL_1$ and $h \in G_2$, define $\Delta(t,h) = \charf(t \cdot r_7(h) \in End(V_7(\mathcal{O})))$, where $r_7$ denotes the representation of $G_2$ on the trace $0$ elements $V_7$ of $\Theta$.  That is, $\Delta(t,h) = 1$ if the endomorphism $t \cdot r_7(h)$ preserves the integer lattice $V_7(\mathcal{O})$ and is $0$ otherwise.  The approximate basis function we will use going forward is the function $\Delta(t,h)|t|^{s}$.  

Because $\GL_1 \times G_2$ has a natural character $\GL_1 \times G_2 \rightarrow \GL_1$, this modification aligns with the philosophy of \cite{bk2, ngo} and the theme of the paper \cite{pollackGJ}. That $\Delta(t,h)|t|^{s}$ is an approximate basic function is made precise in the following proposition.  

Denote by $q$ the order of the residue field $\mathcal{O}/p$.  Define the Hecke operator $T$ on $G_2$ to be $q^{-3}$ times the characteristic function of those $g \in G_2(F)$ with $p \cdot r_7(g) \in M_7(\mathcal{O}) = End(V_7(\mathcal{O}))$.  For ease of notation, set $z = q^{-s}$. 
\begin{proposition}\label{prop:approxBasic} Recall that $v_0 \in V_p$ is a spherical vector for $\pi_p$, and suppose that $\ell: V_p \rightarrow \C$ is a linear funtional.  Then
\begin{align*}\int_{\GL_1(F) \times G(F)}{|t|^{s+3}\Delta(t,g) \ell( g v_0)\,dg\,dt} &=  (1-z)(1-q^{-1}z) L(\pi_p,Std,s) \ell(N_0 * v_0) \\ &= (1-z)(1-q^{-1}z) N_0(\pi_p,s) L(\pi_p,Std,s) \ell(v_0).\end{align*}
Here
\[N_0 = 1 + (q^{-1} +1)z + \frac{z^2}{q} + (q^{-2}+q^{-1})z^{3} + \frac{z^4}{q^2} -\frac{z^2}{q} T\]
and $N_0(\pi_p,s)$ is the meromorphic function of $s$ defined by the equality $N_0 * v_0 = N_0(\pi_p,s) v_0$.
\end{proposition}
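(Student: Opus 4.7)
The plan is to recognize $\Delta(t,\cdot)$ as a spherical Hecke operator, apply the Satake isomorphism to reduce to a generating series in the Satake parameters, and identify this series with the prescribed $L$-factor times the correction polynomial $N_0$.

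The first reduction is essentially formal. Since $r_7(G_2(\mathcal{O}))$ preserves the lattice $V_7(\mathcal{O})$, for every fixed $t \in \GL_1(F)$ the function $\Delta_t := \Delta(t,\cdot)$ is bi-$G_2(\mathcal{O})$-invariant on $G_2(F)$ and hence lies in the spherical Hecke algebra $\mathcal{H}_{G_2,p}$. As $v_0$ is spherical, convolution yields $\Delta_t \ast v_0 = \hat{\Delta}_t(\pi_p) v_0$ for a scalar $\hat{\Delta}_t(\pi_p)$ computable by Satake; the integral thereby collapses to
\[
\ell(v_0) \int_{\GL_1(F)} |t|^{s+3}\, \hat{\Delta}_t(\pi_p)\, dt.
\]
It therefore suffices to identify this scalar with $(1-z)(1-q^{-1}z)\, N_0(\pi_p,s)\, L(\pi_p,\mathrm{Std},s)$.

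Next, I would expand $\Delta_v$ (for $v = \val(t)$) along the Cartan decomposition $G_2(F) = \bigsqcup_{\lambda \in X_\ast(T)^+} K\lambda(p) K$. Diagonalizing $r_7(\lambda(p))$ shows that the support condition $t\cdot r_7(\lambda(p)) \in \operatorname{End}(V_7(\mathcal{O}))$ is equivalent to $w(\lambda) \geq -v$ for every weight $w$ of $V_7$. Since $V_7$ is self-dual with highest weight the short fundamental weight $\varpi_s$, this collapses to $\varpi_s(\lambda) \leq v$. Summing the geometric series in $v$ yields
\[
\int_{\GL_1(F)} |t|^{s+3}\, \hat{\Delta}_t(\pi_p)\, dt \;=\; \frac{1}{1 - q^{-(s+3)}} \sum_{\lambda \in X_\ast(T)^+} q^{-\varpi_s(\lambda)(s+3)}\; \widehat{\mathbf{1}_{K\lambda(p)K}}(\pi_p),
\]
where $\widehat{\mathbf{1}_{K\lambda(p)K}}(\pi_p)$ is a polynomial in the Satake parameters of $\pi_p$, computable via the Macdonald/Satake formula.

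Finally, I would identify this generating series with $(1-z)(1-q^{-1}z)\, N_0(\pi_p,s)\, L(\pi_p,\mathrm{Std},s)$ by comparing both sides as rational functions in $z = q^{-s}$ whose coefficients are polynomials in the Satake parameters. The Euler product $L(\pi_p,\mathrm{Std},s) = \prod_w (1 - w(s_{\pi_p})q^{-s})^{-1}$ over the seven weights of $V_7$ supplies the denominator; the numerator $(1-z)(1-q^{-1}z)\, N_0$ is then pinned down by matching contributions of the low-lying dominant coweights $\lambda$. In particular, the unique dominant coweight with $\varpi_s(\lambda) = 1$ is $\lambda = \varpi_s^\vee$, and the indicator $\mathbf{1}_{K\varpi_s^\vee(p)K}$ is (up to the scalar contribution of $\Delta_0 = \mathbf{1}_{G_2(\mathcal{O})}$) exactly $q^3 T$; this is the source of the $-\tfrac{z^2}{q}T$ term in $N_0$. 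The purely scalar terms of $N_0$ come from $\lambda = 0$ and the further small coweights with $\varpi_s(\lambda)\in\{0,2,3,4\}$.

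The main obstacle will be this final polynomial matching. It requires enumerating the dominant coweights $\lambda$ with small $\varpi_s(\lambda)$, computing their Satake transforms using the explicit $G_2$-root combinatorics and the character of $V_7$, and verifying term-by-term the identity between the generating series and $L(\pi_p,\mathrm{Std},s)^{-1} \cdot (1-z)(1-q^{-1}z)\, N_0(\pi_p,s)$. The ``basic function'' philosophy of Braverman--Kazhdan and Ng\^o assures that an identity of the prescribed shape exists; identifying $N_0$ exactly is a finite but tedious calculation, simplified in \cite{gurevichSegal, segal} by pulling back an analogous approximate basic function from $\SO(V_7) \supseteq G_2$ and correcting for the orbit structure.
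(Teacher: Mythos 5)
Your formal reductions are correct and match what the paper does implicitly: $\Delta(t,\cdot)$ is bi-$G_2(\mathcal{O})$-invariant, the support condition on a dominant coweight $\lambda$ is $\varpi_s(\lambda)\le \val(t)$ because $V_7$ is self-dual with highest weight the short fundamental weight, and integrating the geometric series over $t$ turns the left side into a generating series of Satake transforms $\sum_{\lambda}q^{-\varpi_s(\lambda)(s+3)}\widehat{\mathbf{1}_{K\lambda(p)K}}(\pi_p)$, up to the factor $(1-q^{-(s+3)})^{-1}$. The problem is that everything after that point --- which is the entire content of the proposition, namely that this series equals $(1-z)(1-q^{-1}z)N_0(\pi_p,s)L(\pi_p,Std,s)$ with the \emph{specific} polynomial $N_0$ given in the statement --- is deferred as ``a finite but tedious calculation'' and never performed. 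The Braverman--Kazhdan/Ng\^o philosophy guarantees only that \emph{some} identity of this shape exists; it does not produce the coefficients of $N_0$, and those coefficients are exactly what is used downstream (the whole verification of \eqref{eqn:toProve} and Proposition \ref{prop:CRident} hinges on $B_0(z)$ and the $-q^{-1}z^2T$ term). So as written the proposal establishes the framework but not the statement. For comparison, the paper's proof is a one-line reduction to \cite[Proposition 7.1]{gurevichSegal} by integrating over $t$; Gurevich--Segal in turn avoid a direct Macdonald-formula computation on $G_2$ by pulling back the classical elementary-divisor identity from $\SO(V_7)$ and using Gan--Gross--Savin's decomposition of the relevant $\SO_7(\mathcal{O})$-double cosets into $G_2(\mathcal{O})$-double cosets; if you want a self-contained proof, that route is substantially less painful than computing $\widehat{\mathbf{1}_{K\lambda(p)K}}$ for each small $\lambda$ from the $G_2$ root combinatorics.

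One smaller point: your identification of the unique dominant coweight $\lambda$ with $\varpi_s(\lambda)=1$ as ``$\varpi_s^\vee$'' is off as labeled. Writing $\alpha$ for the short and $\beta$ for the long simple root, the coweight in question is $\alpha^\vee+2\beta^\vee$, which is the fundamental coweight attached to the \emph{long} simple root (it corresponds to the short fundamental weight of the dual group, which is where the confusion usually enters for $G_2$). The substantive claim --- that $q^3T=\mathbf{1}_{G_2(\mathcal{O})}+\mathbf{1}_{K\lambda(p)K}$ for that single $\lambda$ --- is correct, but if you were to actually carry out the term-by-term matching you would need to get this bookkeeping right, and you would also need to enumerate the (several) dominant coweights with $\varpi_s(\lambda)=2,3,4$ contributing to the remaining terms of $N_0$.
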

\begin{proof} This follows from \cite[Proposition 7.1]{gurevichSegal} by integrating over the $t\in \GL_1$.\end{proof}

The proposition says that applying the Hecke operator $\Delta(t,h)|t|^{s+3}$ to the function $\ell(h v_0)$ yields the standard $L$-function of $\pi_p$, up to a correction factor that is a polynomial in $q^{-s}$.  We will apply Proposition \ref{prop:approxBasic} to the case $\ell = L$ where $L$ is as in Theorem \ref{thm:unram}.

It follows from Proposition \ref{prop:approxBasic} that in order to prove Theorem \ref{thm:unram}, it suffices to prove that 
\begin{equation}\label{eqn:toProve}\int_{\GL_1(F) \times G(F)}{|t|^{s+2}\Delta(t,g) L(g v_0)\,dt\,dg} = M(\pi_p,s) \int_{N(F)\backslash \GL_1(F)\times G(F)}{|t|^{s+1}\Phi_{\chi}(t,g)L(g v_0) \,dt\,dg}\end{equation}
where
\[M(\pi_p,s) = (1-q z)(1- z) N_0(\pi_p,s-1) L(E,s)\zeta_F(2s-2).\]
One does this by explicitly manipulating the two-sides of this expression.  This is the technique of non-unique models, in this particular case.  The rest of the paper is concerned with doing this manipulation.

\subsection{Local unramifed integral} To begin to analyze the left-hand side of (\ref{eqn:locUR}), one needs to compute $\Phi_{\chi}$ explicitly.  Denote by $m: \GL_2 \rightarrow G_2$ our chosen identification of the Levi subgroup of the Heisenberg parabolic with $\GL_2$.  Furthermore, for $h \in \GL_2$, set $\widetilde{h} = \det(h)h^{-1}$.
\begin{lemma} Let the assumptions and notation be as in Theorem \ref{thm:unram}.  For $t \in \GL_1$ and $h \in \GL_2$, define $\lambda = \lambda(t,h) = \det(h)/t$.  Then
\[\Phi_{\chi}(t,m(h)) = |\lambda| \charf(\lambda \in \mathcal{O}, \lambda^{-1} h \in M_2(\mathcal{O}), \lambda^{-1} \widetilde{h} \widetilde{v_E} \in \g_E(\mathcal{O}_E)).\]
\end{lemma}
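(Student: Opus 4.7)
The integral defining $\Phi_{\chi}(t,m(h))$ is over the one-dimensional quotient $N^{0,E}(F)\backslash N(F)$. I plan to parametrize this quotient by $r \in F$ using any element $\hat{X}_0 \in \n$ lifting an $X_0 \in W$ with $\langle v_E, X_0\rangle = 1$, setting $n(r) = \exp(r\hat{X}_0)$ so that $\chi(n(r)) = \psi(r)$.

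The crux of the computation is to establish the identity $Ad(n(r)^{-1})\widetilde{v_E} = \widetilde{v_E} + rE_{13}$. Decomposing $\widetilde{v_E} = v_E + (e_1\otimes\omega_1 + e_3^*\otimes\theta_1)$ via the $\g$-module splitting $\g_E = \g \oplus (V_7\otimes E^0)$, with $\omega_1 = \omega + b/3$ and $\theta_1 = \theta - c/3$ lying in $E^0$ (using $\tr(\omega)=-b$, $\tr(\theta)=c$), a direct calculation shows that every element of $\n$ kills both $e_1$ and $e_3^*$ under the $V_7$-action; checking the five generators $E_{12}, E_{23}, E_{13}, v_1, \delta_3$ in turn via the explicit $\g \subset \mathfrak{so}(V_7)$ action formulas, one finds $\n$ annihilates the $V_7\otimes E^0$-summand. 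For the $v_E$-part, the commutator formula $[w,w'] = \langle w,w'\rangle E_{13}$ for $w,w' \in W\subset\n$ yields $[\hat{X}_0, v_E] = -E_{13}$, and since $E_{13}$ is central in $N$, all higher-order terms in $Ad(\exp(-r\hat{X}_0))v_E$ vanish.

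Next, using $Ad(m(h)^{-1})E_{13} = \det(h)^{-1}E_{13}$, the argument of $\Phi$ becomes $t\cdot Ad(m(h)^{-1})\widetilde{v_E} + \frac{tr}{\det(h)}E_{13}$. Only the $E_{13}$-coefficient depends on $r$. Substituting $r' = tr/\det(h)$ (Jacobian $|\lambda|$) and using that $\psi$ has conductor $\mathcal{O}$, the $r$-integral separates as
\[
|\lambda|\int_{F}\psi(\lambda r')\charf(r' \in \mathcal{O})\,dr' = |\lambda|\charf(\lambda\in\mathcal{O}).
\]

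Finally, I would match the residual condition $t\cdot Ad(m(h)^{-1})\widetilde{v_E}\in\g_E(\mathcal{O}_E)$ to the remaining two clauses of the lemma. Splitting $\mathcal{O}_E = \mathcal{O}\oplus\mathcal{O}_E^{\tr=0}$ (valid since $p\nmid 3$), the $F$-valued parts along $\sl_3$, $V_3$, $V_3^\vee$ yield integrality of the transformed $v_E$; via $\lambda^{-1}\widetilde{h} = th^{-1}$, this is the condition $\lambda^{-1}\widetilde{h}\widetilde{v_E}\in\g_E(\mathcal{O}_E)$. The $E^0$-valued contributions from the $V_3\otimes E$- and $V_3^\vee\otimes E$-components reduce to $\lambda^{-1}\widetilde{h}\binom{\omega_1}{\theta_1}\in(\mathcal{O}_E^{\tr=0})^2$, and because $\{\omega_1,\theta_1\}$ is an $\mathcal{O}$-basis of $\mathcal{O}_E^{\tr=0}$, this is equivalent to $\lambda^{-1}\widetilde{h}\in M_2(\mathcal{O})$, i.e., $\lambda^{-1}h\in M_2(\mathcal{O})$ (since the entries of $h$ and $\widetilde{h}$ coincide up to sign and permutation).

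The main obstacle is cleanly separating the integrality condition into the two stated clauses, which requires tracking the $\GL_2$-action through the $\mathbf{Z}/3$-grading of $\g_E$ and using that $\omega_1, \theta_1$ freely generate $\mathcal{O}_E^{\tr=0}$ over $\mathcal{O}$.
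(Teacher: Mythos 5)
Your proposal is correct and follows essentially the same route as the paper: the paper's entire proof is the identity $t h^{-1} n^{-1}\widetilde{v_E} = t h^{-1}\widetilde{v_E} + t\det(h)^{-1}\langle v_E,\overline{n}\rangle E_{13}$, which is exactly the translation formula you derive (via the decomposition $\g_E = \g\oplus V_7\otimes E^0$ and the fact that $\n$ kills $e_1, e_3^*$), followed by the same Fourier-integral and integrality bookkeeping that the paper leaves as ``follows easily.'' You simply fill in the details the paper omits, including the useful observation that $\omega+b/3,\ \theta-c/3$ form an $\mathcal{O}$-basis of $\mathcal{O}_E^{\tr=0}$, which makes the clause $\lambda^{-1}h\in M_2(\mathcal{O})$ transparent.
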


Here and below, by $\charf(P)$ for a property $P$ we mean the function that is $1$ if $P$ is satisfied and $0$ otherwise.

\begin{proof} Suppose $n \in N$, and denote by $\overline{n}$ the image of $n$ in $N^{ab} \simeq W$.  Then
\begin{equation}\label{eqn:hVeTilde} t h^{-1} n^{-1}\widetilde{v_E} = t h^{-1} \widetilde{v_E} + t \det(h)^{-1} \langle v_E, \overline{n} \rangle E_{13}. \end{equation}
The lemma now follows easily from (\ref{eqn:hVeTilde}).\end{proof}

For $\lambda \in \GL_1$ and $h \in \GL_2$, define 
\[A_0(\lambda, h) = \charf(\lambda \in \mathcal{O}, \lambda^{-1} h \in M_2(\mathcal{O}), \lambda^{-1} \widetilde{h} \widetilde{v_E} \in \g_E(\mathcal{O}_E)).\]
This function $A_0$ is related to cubic rings over $\mathcal{O}$ that are subrings of $\mathcal{O}_E$.  To make this precise, we setup some notation and review elements of the arithmetic invariant theory of binary cubic forms and cubic rings.

First, for a cubic ring $T$ over $\mathcal{O}$, we denote by $c(T)$ the $p$-adic content of $T$.  This is the largest integer $c$ so that $T= \mathcal{O} + p^{c} T_0$ for a cubic ring $T_0$ over $\mathcal{O}$. If $T$ is a free rank three $\mathcal{O}$-module in $E$ of the form $T = \mathcal{O} \oplus \mathcal{O} \omega' \oplus \mathcal{O}\theta'$, then $c(T)$ is also defined, but could be negative.  We will use this more general notion moving forward.

Next, for an element $x = \mm{\alpha}{\beta}{\gamma}{\delta}$ of $\GL_2(F)$, denote by $T(x)$ the $\mathcal{O}$-module spanned by $1,\delta \omega - \beta \theta$ and $-\gamma \omega + \alpha \theta$.  Here $1, \omega,\theta$ is the good basis of the maximal order $\mathcal{O}_E$ fixed above.  Then $T(x)$ is a ring, i.e., $T(x)$ is closed under multiplication, if and only if $\widetilde{x} \widetilde{v_E} \in \g_E(\mathcal{O}_E)$.  This is closely connected to \cite[section 2.3]{pollackLL}.  As a consequence of this fact, one has
\[A_0(\lambda,h) = \charf(\lambda \in \mathcal{O}, T(\lambda^{-1}h) \text{ a ring}) = \charf(\lambda \in \mathcal{O}, \lambda| p^{c(T(h))}).\]

For ease of notation, we now set $c(x) := c(T(x))$ for $x \in \GL_2(F)$.  Denote by $I(s)$ the left-hand side of (\ref{eqn:locUR}), and recall that $z = q^{-s}$.  The local unramifed integral to be computed is as follows.  Applying the Iwasawa decomposition, one obtains
\begin{align*} I(s+1) &= \int_{\GL_1 \times \GL_2}{\delta_P^{-1}(m(h)) |\lambda| |\det(h)/\lambda|^{s+1} A_0(\lambda,h) L( m(h) v)\,d\lambda\,dh} \\ &= \int_{\GL_2}{|\det(h)|^{s-2} L(m(h) v)\left(\int_{\GL_1}{|\lambda|^{-s} A_0(\lambda, h)\,d\lambda}\right)\,dh}.\end{align*}

We convert this integral into a sum as follows.  The integral over $\lambda$ becomes
\[\int_{\GL_1}{|\lambda|^{-s} A_0(\lambda, h)\,d\lambda} = \sum_{0 \leq j \leq c(h)}{z^{-j}} = z^{-c(h)} \left(\frac{1 - z^{c(h)+1}}{1-z}\right).\]
For an element $h\in \GL_2(F)$, write $[h]$ for the coset $h \GL_2(\mathcal{O})$.  Whether or not $T(x)$ is closed under multiplication is idependent of the element $x \in h \GL_2(\mathcal{O})$.  Thus
\begin{align}\label{eqn:Isum1} \nonumber I(s+1) &= \frac{1}{1-z}\left(\sum_{[h] \text{ cubic ring}}{L(m(h) v) |\det(h)|^{-2} z^{\val(\det(h))-c(h)}(1-z^{c(h)+1})}\right) \\ &= \frac{1}{1-z}\left(\sum_{[h] \text{ cubic ring}}{L(m(h) v) |\det(h)|^{-2} P_h(z)}\right)\end{align}
with
\[P_h(z) = z^{\val(\det(h))-c(h)}(1-z^{c(h)+1}),\]
a polynomial in $z$.  The sum is over all cubic subrings of $\mathcal{O}_E$.  In order to prove the equality in (\ref{eqn:toProve}), we will further manipulate the expression (\ref{eqn:Isum1}) and the left-hand side of (\ref{eqn:toProve}).

\subsection{The Fourier coefficient of the approximate basic function} In this subsection, we manipulate the left-hand side of (\ref{eqn:toProve}).  More precisely, for $t \in \GL_1(F)$ and $h \in \GL_2(F)$ set
\[D_{\chi}(t,h) = \int_{N(F)}{\chi(n) \Delta(t,n m(h)) \,dn}.\]
Then the left-hand side of (\ref{eqn:toProve}) is
\[D(s):=\int_{\GL_1 \times \GL_2}{\delta_P^{-1}(m(h)) |t|^{s+2} D_{\chi}(t,h) L(m(h) v)\,dh\,dt}.\]

One of the main computations is the expression for $D_\chi(t,h)$. Set $f_{max}(x,y) = ax^3 + bx^2y + cxy^2 + dy^3$, the binary cubic form corresponding to the maximal order $\mathcal{O}_E$, together with its good basis $1, \omega, \theta$. For a binary cubic form $V$ with coefficients in $\mathcal{O}$, define $N(V)$ to be the number of $0$'s of $V$ in $\mathbb{P}^1(\mathbb{F}_q)$.  Also, for an element $h \in \GL_2(F)$, define $\val(h) \in \Z$ to be the largest integer $n$ so that $p^{-n} h \in M_2(\mathcal{O})$.
\begin{proposition} Define $x_0(h)$ by $h = p^{\val(h)}x_0(h)$, and as before $\lambda = \det(h)/t$.  Write $D'_\chi(\lambda,h) = D_\chi(t,h)$, i.e., $D_\chi'$ is the same function as $D_\chi$, except expressed in terms of the new variables $\lambda, h$.  Finally, define
\[\epsilon(x_0(h)) = \begin{cases} 1 &\mbox{if } x_0(h) \in \GL_2(\mathcal{O}) \\ 2 &\mbox{if } x_0(h) \notin \GL_2(\mathcal{O}).\end{cases}\]
Then 
\begin{align*} D'_{\chi}(\lambda,h) &= |\det(\lambda^{-1} h)|^{-1} \charf(h \in M_2(\mathcal{O}), \val(\lambda^{-1} h) \in \{0,1\}, T(x_0(h)) \text{ a ring}) \\ & \;\;\; \times \begin{cases} 1 &\mbox{if } \val(\lambda^{-1} h) = 0 \\ N(f_{max}) - \epsilon(x_0(h)) &\mbox{if } \val(\lambda^{-1} h) = 1.\end{cases}\end{align*}
\end{proposition}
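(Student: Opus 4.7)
The plan is to compute $D_\chi(t,h)$ by explicit unfolding against the embedding of the Heisenberg parabolic into $\SO(V_7)$ recorded in subsection \ref{subsec:SO7}. I would first parametrize $n \in N(F)$ by a pair $(\bar n, \mu) \in W(F) \times F$, where $\bar n = (\alpha,\beta,\gamma,\delta)$ is the image in $N^{ab} \simeq W$ and $\mu \in F$ is the $N_0$-coordinate; the character decouples from $\mu$ since $\chi(n) = \psi(\langle v_E, \bar n\rangle)$ is trivial on the commutator subgroup. Using the claim of subsection \ref{subsec:SO7}, I would then write the $7\times 7$ matrix of $r_7(nm(h))$ in block form relative to the $2+3+2$ decomposition of $V_7$: the diagonal blocks are $h$, $Ad^0(h)$, ${}^{t}h^{-1}$, while the strictly upper-triangular blocks combine the entries of $\bar n$, $\mu$, and $h$.

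Next I would expand the indicator $\Delta(t, nm(h)) = \charf(t\cdot r_7(nm(h)) \in \End(V_7(\mathcal{O})))$ block by block. Substituting $\lambda = \det(h)/t$, integrality of the three diagonal blocks yields $\lambda \in \mathcal{O}$, $\lambda^{-1}h \in M_2(\mathcal{O})$ and (together with the middle block $t\cdot Ad^0(h) \in M_3(\mathcal{O})$) forces $\val(\lambda^{-1}h) \in \{0,1\}$, since $\val(\lambda^{-1}h) \geq 2$ would make $t\cdot Ad^0(h)$ fall out of $M_3(\mathcal{O})$. The middle off-diagonal block couples $\bar n$ to $h$, and by the identification from the preceding subsection (namely that $\widetilde{x}\widetilde{v_E} \in \g_E(\mathcal{O}_E)$ iff $T(x)$ is a ring) this compatibility becomes the requirement that $T(x_0(h))$ be a ring.

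Finally I would carry out the integration, first over $\mu$ and then over $\bar n$. The $\mu$-integral produces a single volume factor that, after changing variables from $t$ to $\lambda$, yields the prefactor $|\det(\lambda^{-1}h)|^{-1}$ and leaves a residual linear constraint on $\bar n$ modulo the ambient lattice. The remaining integral $\int \psi(\langle v_E, \bar n\rangle) \, d\bar n$, restricted to the integrality locus, then reduces to a finite character sum over $W(\mathcal{O})/pW(\mathcal{O})$: when $\val(\lambda^{-1}h) = 0$ only the trivial residue class survives, giving $1$; when $\val(\lambda^{-1}h) = 1$ the surviving classes are those for which a suitable residue of $\bar n$ encodes a zero of the binary cubic $f_{max}$ in $\mathbb{P}^1(\mathbb{F}_q)$, contributing $N(f_{max})$ terms, from which one must subtract the $\epsilon(x_0(h))$ ``trivial'' zeros coming from the reduction of $x_0(h)$ itself. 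The main obstacle is precisely this last step: matching the residual character sum to the combinatorial count $N(f_{max}) - \epsilon(x_0(h))$ requires a careful residue-class analysis, and in particular explaining why the number of subtracted classes is $1$ or $2$ according to whether $x_0(h) \in \GL_2(\mathcal{O})$ or not is the genuine technical content of the calculation.
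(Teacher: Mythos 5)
Your overall strategy (unfold through the $\SO(V_7)$ embedding from subsection \ref{subsec:SO7}, separate the $N_0$-coordinate $\mu$ from $\bar n\in W$, and reduce to a character sum) matches the paper's, but there is a genuine gap at the step where you assert that $\val(\lambda^{-1}h)\in\{0,1\}$ is forced by integrality of the diagonal blocks. It is not. For $h=\diag(p^a,p^b)$ the diagonal blocks of $t\cdot r_7(nm(h))$ are $th$, $t\,Ad^0(h)$ and $t\,{}^th^{-1}$, and their integrality together with $\lambda\in\mathcal O$ amounts to $\max(a,b)\le\val(t)\le a+b$; this leaves $\val(\lambda^{-1}h)$ unbounded above. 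Concretely, $h=p^2\cdot 1_2$, $t=p^4$, $\lambda=1$ satisfies every diagonal integrality condition (and the integrand is nonzero at $n=1$), yet $\val(\lambda^{-1}h)=2$. In the paper the restriction to $\val(\lambda^{-1}h)\le 1$ is an oscillation phenomenon, not an integrality one: after reducing to $x=t\,{}^th^{-1}=\diag(p^j,p^k)$ with $k=\val(\lambda^{-1}h)$, the integral becomes the exponential sum (\ref{eqn:DchiSum1}) over $W(\mathcal O)/p^k$, and a separate lemma shows this sum \emph{vanishes} for $k\ge 2$ by a case analysis (first splitting off the terms with $p\mid w$, then treating $\alpha$ a unit, $\delta$ a unit, or neither) that crucially uses the non-degeneracy of $f_{max}$ modulo $p$. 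This cancellation is the hardest part of the proof, and your proposal contains no substitute for it.

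A similar mis-attribution occurs for the condition ``$T(x_0(h))$ a ring.'' The integrality of the block $yx\in M_{3,2}(\mathcal O)$ is a condition on the integration variable $y$ (it cuts out the domain $V(x)$, which always contains $y=0$), not a condition on $h$. In the paper the ring condition arises again from cancellation: the substitution $\alpha\mapsto\alpha+p^{-r}\alpha_0$ preserves the domain of the $\bar n$-integral but multiplies its value by $\psi(-p^{-r}d\alpha_0)$, forcing $p^{r}\mid d$ for non-vanishing, and that divisibility is what is equivalent to $T(x_0(h))$ being closed under multiplication. Your description of the $k=1$ evaluation --- counting zeros of $f_{max}$ in $\mathbb{P}^1(\mathbb{F}_q)$ and subtracting $\epsilon(x_0(h))$ --- is in the right spirit and is indeed a finite hands-on computation, but it is not the main obstacle; the vanishing for $k\ge 2$ is, and as written your argument would incorrectly include nonzero contributions from $\val(\lambda^{-1}h)\ge 2$ or, alternatively, discard them for a reason that does not hold.
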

\begin{proof} We sketch the proof.  From subsection \ref{subsec:SO7}, we have that $r_7(n m(h))$ is of the form
\[\left(\begin{array}{ccc} h & y' Ad^0(h) & z \,^{t}h^{-1} \\ & Ad^0(h) & y \,^{t}h^{-1} \\ && \,^{t}h^{-1}\end{array}\right)\]
with $y$ of the form $\left(\begin{array}{cc} \alpha & \beta \\ \beta & \gamma \\ \gamma & \delta \end{array}\right)$, $y' = -\,^ty S$ with $S$ given in subsection \ref{subsec:SO7}, and $z$ satisfying $z + z^{t} + \,^tySy=0$.

Because the character $\chi$ corresponds to the maximal order $\mathcal{O}_E$ of $E$, one can show that in order for $D_{\chi}(t,h)$ to be nonzero, one needs $h \in M_2(\mathcal{O})$.  Thus assume for the rest of the proof that $h \in M_2(\mathcal{O})$.  Set $x = t \,^th^{-1}$.  Thus, $\Delta(t,n m(h)) \neq 0$ if and only if
\begin{enumerate}
\item $x \in M_2(\mathcal{O})$;
\item $y x \in M_{3,2}(\mathcal{O})$;
\item $z x \in M_2(\mathcal{O})$.\end{enumerate}

The element $z$ above is of the form $\mu \mm{}{1}{-1}{} - \frac{1}{2} \,^tySy$ for some $\mu \in F$.  Note that for $y$ as above,
\[\mu \mb{}{1}{-1}{} - \frac{1}{2} \,^tySy = \mb{\beta^2-\alpha \gamma}{\frac{1}{2}(\beta \gamma - \alpha \delta) + \mu}{\frac{1}{2}(\beta \gamma - \alpha \delta) - \mu}{\gamma^2 - \beta \delta}.\]

Denote by $W'$ the set of $3 \times 2$ matrices of the form $\left(\begin{array}{cc} a' & b' \\ b' & c' \\ c' & d'\end{array}\right)$.  Now, for $x \in \GL_2(F) \cap M_2(\mathcal{O})$, define
\[V(x)=\left\{y \in W': yx \in M_{3,2}(\mathcal{O}) \text{ and } \exists \mu \in F \text{ s.t. } \left(\mu \mm{}{1}{-1}{} - \frac{1}{2} \,^tySy\right)x \in M_2(\mathcal{O})\right\}.\]
We thus have that
\begin{equation}\label{eqn:DchiNewInt} D_{\chi}(t,h) = ||x||^{-1} \int_{W'}{\chi(y)\charf(y \in V(x))\,dy}\end{equation}
where $x = t \,^th^{-1}$ and $||x||= |p|^{\val(x)}$.  

Suppose that $x = k_1 \mm{p^j}{}{}{p^k} k_2$ with $j \geq k$ and $k_1, k_2 \in \GL_2(\mathcal{O})$.  Note that $V(x) = V(x k_2^{-1})$.  Furthermore, by changing variables in the integral in (\ref{eqn:DchiNewInt}) to eliminate $k_1$ and by replacing $\chi$ by an equivalent character, we can assume that $x = \diag(p^j,p^k)$ with $j \geq k \geq 0$.  We therefore obtain
\begin{align*} D_{\chi}(t,h) &= q^{k} \int_{w=(\alpha,\beta,\gamma,\delta) \in W(F)} \psi(\langle \omega, w \rangle) \charf(p^j \alpha, p^k(\beta,\gamma,\delta)) \\& \;\;\; \times \charf(p^{j}(\beta^2 - \alpha \gamma) \in \mathcal{O}, p^{j}(\alpha \delta - \beta \gamma) \in \mathcal{O}, p^{k}(\gamma^2 - \beta \delta) \in \mathcal{O}))\,dw.\end{align*}
Here $\omega =(a,\frac{b}{3},\frac{c}{3},d)\in W(\mathcal{O})$ is a binary cubic form representing the maximal order $\mathcal{O}_E$ of $E$.  Set $r = j -k$, so that $x=p^k\diag(p^r,1)$.  For any $\alpha_0 \in \mathcal{O}$, making a change of variable $\alpha \mapsto \alpha + p^{-r}\alpha_0$ on the one hand fixes this integral, but on the other hand multiplies it by $\psi(-p^{-r} d \alpha_0)$.  It follows that for $D_{\chi}(t,h)$ to be nonzero, we must have $p^{r}|d$.  This is equivalent to $T(x_0(h))$ is a ring, so we have checked this condition of the statement of the proposition.

From now assume that $p^{r}|d$.  Making a change of variables, we obtain
\begin{equation}\label{eqn:DchiSum1} D_{\chi}(t,h) = q^{j} \sum_{w=(\alpha,\beta,\gamma,\delta) \in W(\mathcal{O})/p^k, **}{\psi\left(\frac{\langle \omega, (p^{-r}\alpha,\beta,\gamma,\delta) \rangle}{p^k}\right)} \end{equation}
where the conditions $**$ mean that
\[ \alpha \gamma - p^r \beta^2 \in p^k \mathcal{O}, \alpha \delta - p^r \beta \gamma \in p^k \mathcal{O}, \gamma^2 - \beta \delta \in p^k \mathcal{O}.\]

In case $k =0$ or $k =1$, this sum can be directly evaluated by hand, and one obtains values as in the statement of the proposition.  One is left to consider the case $k \geq 2$, i.e., the when $p^2|x$.  In the following lemma, we show that $\int_{V(x)}{\chi(y)\,dy}$ vanishes if $p^2 |x$, completing the proof of the proposition.\end{proof}

\begin{lemma} The sum on the right-hand side of (\ref{eqn:DchiSum1}) vanishes if $k \geq 2$.\end{lemma}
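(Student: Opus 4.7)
My plan is to show the vanishing by exhibiting, at every point of the $**$-variety in $(\mathcal{O}/p^k)^4$, a linear subspace $T$ of translations $(\alpha,\beta,\gamma,\delta) \mapsto (\alpha,\beta,\gamma,\delta) + p^{k-1}\xi$ that preserves the conditions $**$ modulo $p^k$ but on which the character factor $\psi(L(\xi)/p)$ is non-trivial, where $L(\xi_1,\xi_2,\xi_3,\xi_4) := -p^{-r}d\,\xi_1 + c\,\xi_2 - b\,\xi_3 + a\,\xi_4$. This partitions the sum into $T$-orbits each of which sums to zero. I stratify by the mod-$p$ reduction $\bar{P} = (\bar\alpha,\bar\beta,\bar\gamma,\bar\delta)$. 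For the origin stratum $\bar P = 0$, I would rewrite $\alpha = p\alpha'$, and so on: the conditions $**$ modulo $p^k$ become conditions on $(\alpha',\ldots,\delta') \in (\mathcal{O}/p^{k-1})^4$ modulo $p^{k-2}$, while the character becomes $\psi(L(\alpha',\ldots,\delta')/p^{k-1})$. Peeling off the final $p^{k-2}$-layer $\eta \in (\mathcal{O}/p)^4$, the $**$ conditions do not see $\eta$ while the character picks up a factor $\psi(L(\eta)/p)$; summing over $\eta \in \mathbb{F}_q^4$ yields zero because $L$ is a non-zero linear form modulo $p$ (otherwise all of $a,b,c,p^{-r}d$ vanish mod $p$, forcing $\Delta(f_{max}) \equiv 0 \pmod p$ and contradicting the assumption that $E/F$ is unramified at $p$).

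For the non-origin stratum $\bar P \neq 0$, note that each of $F_1 = \alpha\gamma - p^r\beta^2$, $F_2 = \alpha\delta - p^r\beta\gamma$, $F_3 = \gamma^2 - \beta\delta$ is homogeneous of degree two, so the Euler identity $\nabla F_i(\bar P)\cdot \bar P = 2\,F_i(\bar P) \equiv 0 \pmod{p}$ (using $p \neq 2$) places the Euler vector $\bar P$ in the tangent space to $\{F_1 = F_2 = F_3 = 0\}$ at $\bar P$. Hence shifting by $p^{k-1}\bar P$ preserves $**$ modulo $p^k$ and multiplies the character by $\psi(L(\bar P)/p)$. The $\mathbb{F}_q$-orbit of such Euler shifts sums to zero whenever $L(\bar P) \not\equiv 0 \pmod p$, reducing the problem to the ``bad'' non-origin points with $L(\bar P) \equiv 0 \pmod p$.

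For bad points in the case $r = 0$, the mod-$p$ variety $\{F_i = 0\}$ is the affine cone over the twisted cubic, so $\bar P = (s^3,s^2t,st^2,t^3)$ for some $(s,t) \in \mathbb{F}_q^2\setminus\{0\}$, with tangent space spanned by $\partial_s = (3s^2,2st,t^2,0)$ and $\partial_t = (0,s^2,2st,3t^2)$. A direct calculation yields $L(\partial_s) = -\partial_y f_{max}(t,-s)$ and $L(\partial_t) = \partial_x f_{max}(t,-s)$, while the bad condition reads $L(\bar P) = f_{max}(t,-s) \equiv 0 \pmod p$. Since $\Delta(f_{max})$ is a unit modulo $p$, the cubic $f_{max}$ has no repeated root mod $p$, so $\partial_x f_{max}$ and $\partial_y f_{max}$ cannot both vanish at the root $(t,-s)$; hence $L$ is non-zero on at least one of the two Veronese tangent directions, killing the orbit. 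The case $r \geq 1$ is analogous: the mod-$p$ variety degenerates to $\alpha\gamma \equiv \alpha\delta \equiv 0$, $\gamma^2 \equiv \beta\delta \pmod{p}$, which stratifies into $\{\alpha \equiv 0\}$ and $\{\gamma \equiv \delta \equiv 0\}$, and the same separability input, together with $p \nmid 2,3$, supplies the required tangent direction. The main obstacle will be to organize this tangent analysis uniformly across the singular strata, particularly for $r \geq 1$; in every case the decisive input is $\Delta(f_{max}) \in \mathcal{O}^\times$.
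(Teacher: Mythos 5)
Your argument is sound, and at bottom it runs on the same mechanism as the paper's proof: translations $w \mapsto w + p^{k-1}\xi$ that preserve $**$ (for quadrics $F_i$ this is exactly $\nabla F_i(\bar P)\cdot \xi \equiv 0 \pmod p$, using $2(k-1)\geq k$) while twisting the summand by $\psi(L(\xi)/p)$, with separability of $f_{max}$ mod $p$ as the decisive input. The packaging is genuinely different, though. The paper kills $p\mid w$ by one shift, then explicitly parametrizes the $p\nmid w$ solutions into three families ($\alpha$ a unit, $\delta$ a unit, or $\beta$ a unit with $\alpha,\delta$ non-units), reducing each to a sum $\sum_{u}\psi(u\,g(v)/p^k)$ over a unit $u$ --- that unit sum is the multiplicative avatar of your Euler shift --- and finishes with $g'(v_0)\not\equiv 0$ at the surviving roots, which is your second tangent direction. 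Your version replaces the parametrization by the intrinsic stratification of the determinantal variety and its tangent spaces; this is arguably cleaner, makes the twisted cubic visible, and (amusingly) the case you do in full, $r=0$, is precisely the one the paper leaves as an exercise, and vice versa. Your $r\geq 1$ sketch does close, and the ``singular strata'' are harmless: on $\{\bar\alpha\neq 0\}$ one gets $\bar\gamma=\bar\delta=0$, the tangent space is $\{\xi_3=\xi_4=0\}$, and $L=-p^{-r}d\,\xi_1+c\,\xi_2$ is nonzero there because $p\mid d$ and separability force $p\nmid c$; on $\{\bar\alpha=0,\ \bar\delta\neq 0\}$ the point is $\delta(0,u^2,u,1)$ with $u=\gamma/\delta$, the tangent space is $\{\xi_1=0,\ \delta\xi_2=2\gamma\xi_3-\beta\xi_4\}$, the bad condition $L(\bar P)\equiv 0$ says $u$ is a root of the quadratic factor of $f_{max}\equiv x(ax^2+bxy+cy^2)\pmod p$, and $L$ vanishes on the whole tangent space only if that root is double, contradicting separability; and on $\{\bar\alpha=\bar\gamma=\bar\delta=0,\ \bar\beta\neq0\}$ one has $L(\bar P)=c\bar\beta\not\equiv 0$, so the Euler shift alone kills it. Two cosmetic remarks: over $\mathbf{F}_q$ the nonzero points of the cone are scalar multiples $\lambda(1,u,u^2,u^3)$ rather than literal points $(s^3,s^2t,st^2,t^3)$, but tangent spaces and the condition $L(\bar P)\equiv 0$ are scale-invariant, so nothing is lost; and the Euler identity needs only $F_i(P)\in p^k\mathcal{O}$, not $p\neq 2$.
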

\begin{proof} The sum breaks up into two pieces, consisting of those $w$ with $p|w$ and those $w$ with $p \nmid w$.  Consider first the subsum consisting of those $w$ with $p|w$:
\[D_{\chi}^{p} := \sum_{w=(\alpha,\beta,\gamma,\delta) \in W(\mathcal{O})/p^k, p|w \text{ and } **}{\psi\left(\frac{\langle \omega, (p^{-r}\alpha,\beta,\gamma,\delta) \rangle}{p^k}\right)}.\]
If $w_0 = (0,b_0,c_0,d_0) \in W(\mathcal{O})$ is fixed, then making the variable change $w \mapsto w + p^{k-1}w_0$ preserves $D_{\chi}^{p}$.  However, this changes the sum by $\psi(\langle \omega, w_0 \rangle/p)$.  Because $\omega$ is non-degenerate modulo $p$, there is $w_0$ with $\psi(\langle \omega, w_0 \rangle/p) \neq 1$.  Hence $D_{\chi}^{p} = 0$, and it suffices to evaluate
\[D_{\chi}^{1} := \sum_{w=(\alpha,\beta,\gamma,\delta) \in W(\mathcal{O})/p^k, p \nmid w \text{ and } **}{\psi\left(\frac{\langle \omega, (p^{-r}\alpha,\beta,\gamma,\delta) \rangle}{p^k}\right)}.\]
In case $r=0$, the evaluation of the sum $D_{\chi}^{1}$ now becomes a fun exercise, which we leave to the reader; it can be analyzed by the same method that we will use to handle the case $r \geq 1$.  Thus assume $r \geq 1$.  In this case, if $w =(\alpha,\beta,\gamma,\delta)$ satisfies $**$, one finds that there are three disjoint cases:
\begin{enumerate}
\item $\alpha \in (\mathcal{O}/p^k)^\times$.  In this case, $w \equiv \alpha (1,\beta, p^r \beta^2, p^{2r} \beta^3)$ for $\beta \in \mathcal{O}/p^{k}$.
\item $\delta \in (\mathcal{O}/p^k)^\times$.  In this case, $w \equiv \delta (p^{r}\gamma^3,\gamma^2,\gamma,1)$ for $\gamma \in \mathcal{O}/p^{k}$.
\item Neither $\alpha$ nor $\delta$ are units.  In this case, $w \equiv \beta(\alpha,1,\gamma,\gamma^2)$ with $\alpha \gamma \equiv p^r \pmod{p^k}$ and both $\alpha$ and $\gamma$ divisible by $p$.\end{enumerate}

Consider first the case where $\delta \in (\mathcal{O}/p^k)^\times$.  Define the polynomial $g(x) = a - bx + cx^2 - dx^3$.  Then the subsum of $D_\chi^{1}$ corresponding to $\delta \in (\mathcal{O}/p^k)^\times$ becomes 
\[\sum_{\delta \in (\mathcal{O}/p^k)^\times, \gamma \in \mathcal{O}/p^k}{\psi\left(\frac{\delta g(\gamma)}{p^k}\right)}.\]
If $\gamma$ is such that $g(\gamma)$ is a unit, then the sum over $\delta$ with $\gamma$ fixed is $0$, because $k \geq 2$.  Thus one need only sum over those $\gamma$ with $g(\gamma) \equiv 0 \pmod{p}$.  Now, fix a root $\gamma_0$ of $g$ modulo $p$, and consider the sum
\[\sum_{\delta \in (\mathcal{O}/p^k)^\times, \gamma \equiv \gamma_0 \pmod{p}}{\psi\left(\frac{\delta g(\gamma)}{p^k}\right)}.\]
The domain of this sum is preserved under $\gamma \mapsto \gamma + p^{k-1}\gamma'$ for some fixed $\gamma'$.  But 
\[\frac{g(\gamma+p^{k-1}\gamma')}{p^k} \equiv \frac{g(\gamma)}{p^k} + \frac{g'(\gamma_0)}{p}\gamma'.\]
Thus the sum over $\gamma \equiv \gamma_0$ vanishes unless $g'(\gamma_0) \equiv 0 \pmod{p}$.  But then $g(x)$ modulo $p$ has a double root at $\gamma_0$, which implies that the binary cubic $f(x,y) \equiv \mu \ell^2 x \pmod{p}$ for some $\mu \in (\mathcal{O}/p)^\times$ and line $\ell$.  But this contradicts the assumption that $f$ is non-degenerate modulo $p$, so there are no such $\gamma_0$ with $g(\gamma_0) \equiv g'(\gamma_0) \equiv 0 \pmod{p}$.  It follows that the subsum consisting of those $w$ with $\delta$ a unit vanishes.

Next consider the case where $\alpha \in (\mathcal{O}/p^{k})^\times$.  Set
\[\omega' = (p^{2r}a,\frac{p^r b}{3},\frac{c}{3},p^{-r}d) = (a',\frac{b'}{3},\frac{c'}{3},d')\]
and define the polynomial $h(x) = a'x^3 - b'x^2 + c'x -d'$.  Note that $\omega'$ is integral.  Furthermore, because $f$ is non-degenerate but $p|d$, we must have $p \nmid c$ and thus $c= c'$ is a unit.  Now, the subsum of $D_\chi^1$ corresponding to $\alpha \in (\mathcal{O}/p^k)^\times$ becomes
\[\sum_{\alpha \in (\mathcal{O}/p^k)^\times, \beta \in \mathcal{O}/p^k}{\psi\left(\frac{\alpha h(\beta)}{p^k}\right)}.\]
Just like in the previous case, this sum immediately reduces to the subsum consisting of those $\beta$ equivalent to some $\beta_0$ modulo $p$ with $h(\beta_0) \equiv h'(\beta_0) \equiv 0 \pmod{p}$.  But since $p$ divides $a'$ and $b'$, but $c'$ is prime to $p$, there are no double roots $\beta_0$.  Hence this subsum of $D_{\chi}^{1}$ vanishes as well.

We are left with the final case, in which neither $\alpha$ nor $\delta$ is unit.  Set $d' = p^{-r}d$, which is in $\mathcal{O}$. In this case, we must evaluate
\[\sum_{\beta \in (\mathcal{O}/p^k)^\times, \alpha, \gamma \in \mathcal{O}/p^k ***}{\psi\left( \frac{\beta(a \gamma^2 - b \gamma + c - d' \alpha)}{p^k}\right)}\]
where the conditions $***$ are that $\alpha \gamma \equiv p^{r}\pmod{p^k}$ and both $\alpha$ and $\gamma$ are divisible by $p$.  But note that, as mentioned above, $c$ is a unit.  Thus the fact that $\alpha \equiv \gamma \equiv 0 \pmod{p}$ imply that $a \gamma^2 - b \gamma + c - d' \alpha$ is a unit mod $p$.  Hence the sum over $\beta$, with $\alpha$ and $\gamma$ fixed vanishes.  Thus this subsum of $D_\chi^{1}$ vanishes, and the proof of the lemma is complete. \end{proof}

We now reconsider the condition ``$T(x_0(h))$ a ring'' from the above proposition.  We claim that this condition is equivalent to $\val(h) = c(h)$.  Clearly, $T(x_0(h))$ a ring means that $c(h) \geq \val(h)$.  Recall $f_{max}(x,y) = ax^3 + bx^2y + cxy^2 + dy^3$, the binary cubic form corresponding to the maximal order $\mathcal{O}_E$, together with its good basis $1, \omega, \theta$.  By assumption $f_{max}(x,y)$ is non-degenerate modulo $p$.  It follows from this that $c(x_0(h))$ must be $0$.  Indeed, suppose $x_0(h) = k \mm{p^r}{}{}{1}k'$ with $k, k' \in \GL_2(\mathcal{O})$, and $f'= f \cdot k = a'x^3 + b'x^2y + c'xy^2 + d'y^3$.  Then $T(x_0(h))$ is represented by the binary cubic
\[\frac{1}{p^{r}}f_{max}((x,y) \widetilde{\diag(p^r,1)} \widetilde{k}) = \frac{1}{p^r}a' x^3 + b'x^2y + p^{r} c'x y^2 + p^{2r} d' y^3.\]
Thus, if $c(x_0(h)) \geq 1$, then $p^{r+1}|a'$ and $p|b'$, so that $f'$ and then $f_{max}$ is degenerate modulo $p$.

We therefore obtain
\[D(s) = \sum_{[h] \text{ cubic ring}}{L(h v) |\det(h)|^{-2} \charf(\val(h) = c(h)) z^{\val(\det(h))-c(h)}(1 + (N(f_{max})-\epsilon(h_0))z)}.\]
This is the expression that we will have to compare to the Hecke-manipulated local integral.  Define $N'(f_{max}, h_0) = N(f_{max}) -\epsilon(h_0)+1$.  We thus have
\[D(s) = \sum_{[h] \text{ cubic ring}}{L(h v) |\det(h)|^{-2} \charf(\val(h) = c(h)) z^{\val(\det(h))-c(h)}(1 - z + N'(f_{max},h_0)z)}.\]

For later use, we set
\[B_0(z) =  1 + (q +1)z + q z^2+ (q^2+q)z^{3} + q^2 z^4\]
so that $N_0(s-1)= B_0(z) - q z^2 T$.

\subsection{The Hecke operator $S$} Define the Hecke operator $S$ on $G_2(F)$ to be the bi-$G_2(\mathcal{O})$ invariant function on $G_2(F)$ that is the characteristic function of those $g \in G_2(F)$ with $p \cdot r_7(g) \in M_7(\mathcal{O}) \setminus p M_7(\mathcal{O})$.  Thus $T = q^{-3}(S+\mathbf{1}_{G_2(\mathcal{O})})$, where $\mathbf{1}_{G_2(\mathcal{O})}$ denotes the characteristic function of $G_2(\mathcal{O})$. In order to evaluate the right-hand side of (\ref{eqn:toProve}), we will need a coset decomposition of $S$.  This was given in \cite[sections 13 and 14]{ganGrossSavin}.

We require some notation for Hecke operators on the Levi $\GL_2$ of the Heisenberg parabolic.  Define
\[T(p) = \GL_2(\mathcal{O}) \mm{p}{}{}{1} \GL_2(\mathcal{O})\]
and
\[T(p^{-1}):= \GL_2(\mathcal{O}) \mm{p^{-1}}{}{}{1} \GL_2(\mathcal{O}).\]
One has the following proposition regarding $L(m(h) S * v_0)$, which is a conglomeration of \cite[Corollary 13.3, Proposition 14.2 and Proposition 14.5]{ganGrossSavin}, or essentially equivalent to \cite[Proposition 15.6]{ganGrossSavin}.
\begin{proposition}[Gan-Gross-Savin] Suppose $h \in \GL_2(F)$ and $L(m(h) v_0) \neq 0$.  Then 
\begin{align*} L(m(h) S * v_0) &= L( m(h p^{-1}) v_0) + q L(m(h * T(p^{-1})) v_0) + q^4 L(m(h * T(p)) v_0) + q^{6} L(m(h p) v_0) \\ &\;\;\; + \left(-1 + q^2(N(f_{max} \cdot h)-1)\right)L(h v_0).\end{align*}\end{proposition}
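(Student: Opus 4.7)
The plan is to obtain an explicit right-coset decomposition of $S$ modulo $G_2(\mathcal{O})$ and then evaluate each contribution using the Iwasawa decomposition with respect to the Heisenberg parabolic $P = MN$, combined with the character equivariance $L(nv) = \chi(n) L(v)$ and the sphericity of $v_0$.

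First I would enumerate the support of $S$: it is a finite union of double cosets $G_2(\mathcal{O}) g\, G_2(\mathcal{O})$, one for each cocharacter of a maximal split torus for which $p \cdot r_7(g)$ lies in $M_7(\mathcal{O}) \setminus p M_7(\mathcal{O})$. Using the long- and short-root coroots recorded in section \ref{sec:G2andOct}, this analysis picks out exactly the five double cosets corresponding to the terms in the proposition. For each double coset I would choose right-coset representatives inside $P(F) = M(F)N(F)$. For any such representative $g$, write the Iwasawa decomposition $m(h) g = n(u)\, m(h')\, k$ with $k \in G_2(\mathcal{O})$, so that
\[L(m(h) g\, v_0) = \chi(n(u))\, L(m(h')\, v_0)\]
by sphericity of $v_0$ and equivariance of $L$.

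Next I would sort the cosets into two groups. The "regular" cosets are those where $m(h')$ moves in a nontrivial way relative to $m(h)$: one checks by direct Iwasawa computation that they produce precisely the four terms $L(m(hp^{-1})v_0)$, $q\, L(m(h\ast T(p^{-1}))v_0)$, $q^4 L(m(h \ast T(p))v_0)$, and $q^6 L(m(hp)v_0)$, with the exponents of $q$ arising as the index of the pointwise stabilizer in $G_2(\mathcal{O})$ of the coset representative. The remaining "middle" cosets have $m(h') = m(h)$, so their combined contribution is $L(m(h)v_0)$ multiplied by a character sum $\sum_u \chi(n(u))$ over a finite set of unipotent translates. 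The character $\chi$ is $\psi(\langle v_E, \cdot \rangle)$, and pulling the $h$-conjugation inside turns the sum into one against the binary cubic form $f_{max}\cdot h$ reduced mod $p$.

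The key computation is the evaluation of this character sum. Using the additivity of $\psi$ and an orthogonality argument reminiscent of the local integrals appearing in the previous subsection, the sum decomposes as a contribution of $-1$ (coming from orbit-counting with a single open dense orbit removed) plus $q^2$ times the number of $\mathbb{P}^1(\mathbb{F}_q)$-zeros of $f_{max}\cdot h$, minus a $q^2$ arising from one distinguished zero. This produces the correction $-1 + q^2(N(f_{max} \cdot h) - 1)$. The main obstacle is the careful bookkeeping: enumerating the middle cosets correctly and verifying that the Iwasawa calculation for each of them yields the claimed character sum. All of this is contained in Gan--Gross--Savin's analysis; indeed, the formula is the conglomeration of their Corollary 13.3 (the coset decomposition), Proposition 14.2 (the regular-coset contributions), and Proposition 14.5 (the middle-coset character sum), and the proof consists of assembling these three ingredients.
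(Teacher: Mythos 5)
The paper does not actually prove this proposition: it is quoted verbatim from Gan--Gross--Savin, with the justification being precisely the citation of their Corollary 13.3, Proposition 14.2 and Proposition 14.5 (equivalently their Proposition 15.6). Since your proposal ultimately rests on exactly those three results, your bottom line coincides with the paper's. Your sketch of what the Gan--Gross--Savin argument looks like (left-coset decomposition of $S$, Iwasawa decomposition $m(h)g = n(u)m(h')k$, equivariance of $L$ and sphericity of $v_0$, sorting cosets by the $M$-component $m(h')$, and a character sum for the ``middle'' cosets) is structurally the right picture.

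One point in the sketch is wrong as stated, though. The support of $S$ is a \emph{single} $G_2(\mathcal{O})$-double coset, not five. By the Cartan decomposition, $S$ is supported on the cosets $G_2(\mathcal{O})\lambda(p)G_2(\mathcal{O})$ for dominant cocharacters $\lambda$ with $p\cdot r_7(\lambda(p)) \in M_7(\mathcal{O})\setminus pM_7(\mathcal{O})$; since the nonzero weights of $V_7$ are exactly the short roots, this condition says $0 \neq \langle \lambda, \beta_0\rangle \leq 1$ for the highest short root $\beta_0$, and there is exactly one nonzero dominant cocharacter satisfying this. The five terms in the formula arise from partitioning the left $G_2(\mathcal{O})$-cosets \emph{inside} this single double coset according to where their Iwasawa $M(F)$-component lands relative to $m(h)$ (namely in $[hp^{\pm 1}]$, in the cosets of $h*T(p^{\pm 1})$, or in $[h]$ itself), the coefficients $1$, $q$, $q^4$, $q^6$ being the number of left cosets of each type and the last term being the character sum over the cosets fixing $[h]$. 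Your description of that character sum is plausible but is essentially reverse-engineered from the answer; the actual evaluation (which is where the quantity $N(f_{max}\cdot h)$ genuinely enters) is the content of the cited Proposition 14.5 of Gan--Gross--Savin and is not reproduced in this paper either.
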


\subsection{Hecke operator applied to local integral} We can now apply $N_0(s-1)$ to the local integral $I(s+1)$. First, note that for $g \in \GL_2(F)$, 
\[\sum_{[h]}{L(m(h g) v) |\det(h)|^{-2} P_h(z)} = \sum_{[h]}{L(m(h) v) |\det(h)|^{-2} |\det(g)|^2 P_{h g^{-1}}(z)}.\]
We thus have that
\[ q^{-2} \sum_{[h]}{L\left(m(h) (S+1)* v\right) |\det(h)|^{-2} P_h(z)} = \sum_{[h]}{L(m(h ) v) |\det(h)|^{-2} M_h(z)}\]
with
\[M_h(z) = q^2 P_{hp}(z) + q P_{h * T(p)}(z) + P_{h * T(p^{-1})}(z) + P_{h p^{-1}}(z) + (N(f_{max} \cdot h)-1) P_h(z).\]
Here, for a Hecke operator $Y$ on $\GL_2$ with coset decomposition $Y = \sum_{i}{ a_i \left[y_i \GL_2(\mathcal{O})\right]}$, $P_{h * Y}(z) = \sum_{i}{a_i P_{h y_i}(z)}$. Hence 
\[N_0(s-1)I(s+1) = \frac{1}{1-z} \left(\sum_{[h]}{L(m(h ) v) |\det(h)|^{-2} (B_0(z)P_h(z)-z^2 M_h(z))}\right),\]
and thus
\[(1-qz)(1-z)N_0(s-1)I(s+1) = (1-qz)\sum_{[h]}{L(m(h ) v) |\det(h)|^{-2} (B_0(z)P_h(z)-z^2 M_h(z))}.\]

\subsection{The cubic ring identity} Thus, to prove the equality of equation (\ref{eqn:toProve}) and thus that global integral represents $L^{S}(\pi, Std,s-2)$, we need to verify the identity
\begin{align}\label{eqn:CRident1}\zeta_F(2s-2)L(E,s) (1-qz) (B_0(z)P_h(z)-z^2 M_h(z)) &= \charf(\val(h) = c(h)) z^{\val(\det(h))-c(h)} \\ \nonumber &\;\;\; \times (1 - z + N'(f_{max},h_0)z).\end{align}
There is one immediate simplification here: $\zeta(2s-2)(1-qz) = (1+qz)^{-1}$.

To verify (\ref{eqn:CRident1}), we need to know how the content of cubic rings change under the action of the Hecke operator $T(p)$. That is, we must understand how the multiset $\{c(h y_i)\}_i$ is related to $c(h)$, where $T(p) = \sqcup y_i \GL_2(\mathcal{O})$ is the coset decomposition of $T(p)$.  

Slightly more canonically, one can reformulate this in terms of $\mathcal{O}$-lattices $\Lambda$ in $F^2$.  Namely, fix the lattice $\Lambda_0 = \mathcal{O}^2 \subseteq F^2$, with $F^2$ thought of as row vectors.  Now, for any $y \in \GL_2(F)$ and lattice $\Lambda$ in $F^2$, we get a new lattice $\Lambda y$.  Identifying $\Lambda_0$ with the rank two $\mathcal{O}$-module $\mathcal{O}_{E}/\mathcal{O}$, one can consider the rank three $\mathcal{O}$-module $T(\Lambda)$ corresponding to $\Lambda$, defined by $T(\Lambda) = T(h)$ if $\Lambda = \Lambda_0 h$.  We also write $c(\Lambda) = c(T(\Lambda))$ for the content of $T(\Lambda)$.  In this setup, the Hecke operator $T(p)$ replaces a lattice $\Lambda$ with the $q+1$-sublattices of $\Lambda$ of index $q$.  We thus need to understand the multiset $\{c(\Lambda')\}$ where $\Lambda' \subseteq \Lambda$ is an index $q$ sublattice.

Suppose $h = p^{c} h_0$, with $f_{max} \cdot h_0$ integral and not divisible by $p$, so that the content of $h$ is $c$, i.e., $c(h) = c$.  Set $f_0 = f_{max} \cdot h_0$.  Then $f_0$ is one of three types modulo $p$:
\begin{enumerate}
\item $f_0$ is irreducible modulo $p$;
\item $f_0 = \ell q$ with $\ell$ a line and $q$ irreducible modulo $p$;
\item $f_0 = \ell_1 \ell_2 \ell_3$ with $\ell_i$ distinct lines modulo $p$;
\item $f_0 = \ell_1^2 \ell_2$ with $\ell_1, \ell_2$ distinct lines modulo $p$;
\item $f_0 = \alpha \ell^3$, with $\ell$ a line modulo $p$ and $\alpha \in (\mathcal{O}/p)^\times$\end{enumerate}

One has the following lemma.
\begin{lemma} Suppose $f = p^{c} f_0$, with $f_0$ in each of the cases enumerated above.  Denote by $\Lambda_f$ the rank two $\mathcal{O}$-lattice corresponding to $f$.  The content $c(\Lambda')$ of the index $q$ sublattices of $\Lambda_f$ are given as follows, in each of the various cases:
\begin{enumerate}
\item $f_0$ irreducible mod $p$: all $q+1$ index $q$ sublattices $\Lambda'$ have $c(\Lambda') = c -1$;
\item $f_0 = \ell q$ mod $p$: $1$ sublattice $\Lambda' = \Lambda_{\ell}$ has $c(\Lambda_\ell) = c$; the other $q$ sublattices have $c(\Lambda') = c-1$;
\item $f_0 = \ell_1 \ell_2 \ell_3$ mod $p$: Three lattices $\Lambda' = \Lambda_{\ell_i}$, $i = 1,2,3$, have $c(\Lambda_{\ell_i}) = c$; the other $q-2$ sublattices have $c(\Lambda') = c-1$.
\item $f_0 = \ell_1^2 \ell_2$ mod $p$: $1$ lattice $\Lambda' = \Lambda_{\ell_2}$ has $c(\Lambda_{\ell_2}) = c$; one lattice $\Lambda_{\ell_1}$ has $c(\Lambda_{\ell_1}) = c+1$; the other $q-1$ lattices have $c(\Lambda') = c-1$.
\item $f_0 = \alpha \ell^3$ mod $p$: $1$ lattice $\Lambda_{\ell}$ has $c(\Lambda_{\ell}) = c+2$; the other $q$ lattices have $c(\Lambda') = c-1$.\end{enumerate}
\end{lemma}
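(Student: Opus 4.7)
The $q+1$ index-$q$ sublattices $\Lambda' \subseteq \Lambda_f$ are in bijection with lines $\ell \in \mathbb{P}(\Lambda_f/p\Lambda_f) \cong \mathbb{P}^1(\mathbb{F}_q)$. For each $\ell$ I choose an $\mathcal{O}$-basis of $\Lambda_f$ in which $\ell = [1:0]$; in the conventions of the paper for $T(\cdot)$, the corresponding $\Lambda_\ell$ is then $\Lambda_f \cdot h_\ell$ with $h_\ell = \diag(p,1)$. Writing $f = p^c f_0$ with $f_0 = (a_0,b_0,c_0,d_0)$ in the chosen basis, the rule $(f\cdot h)(x,y) = \det(h)^{-1} f((x,y)\tilde h)$ yields
\[
f \cdot h_\ell \;=\; (p^{c-1}a_0,\; p^c b_0,\; p^{c+1}c_0,\; p^{c+2}d_0).
\]
A direct check shows that the content of a rank-two lattice $T\supseteq\mathcal{O}$ in $E$ agrees with the $p$-adic content of its associated binary cubic form (allowing negative values when $T$ fails to be a ring), so
\[
c(\Lambda_\ell) \;=\; \min\bigl(c-1+\val(a_0),\; c+\val(b_0),\; c+1+\val(c_0),\; c+2+\val(d_0)\bigr).
\]

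Now I case-analyze on the factorization of $\overline{f_0}$. When $\ell$ is not a root, $\val(a_0) = 0$ and $c(\Lambda_\ell) = c-1$; when $\ell$ is a simple root, $\val(a_0) \geq 1$ and $\val(b_0) = 0$, so $c(\Lambda_\ell) = c$. The double- and triple-root cases instead require the sharper bounds $\val(a_0)\geq 2$ (respectively $\val(a_0)\geq 3$ and $\val(b_0)\geq 2$) in order to obtain $c(\Lambda_\ell) = c+1$ (respectively $c+2$). These sharper bounds come from the hypothesis that $\mathcal{O}_E/\mathcal{O}$ is unramified: since $f = f_{\mathrm{max}}\cdot h_f$ for some $h_f \in \GL_2(F)$ and $\mathrm{disc}(f_{\mathrm{max}}) \in \mathcal{O}^\times$, we have $\val(\mathrm{disc}(f)) = 2\val(\det h_f)$ even. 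In the double-root case, expanding the Delone-Faddeev formula
\[
\mathrm{disc}(f_0) \;=\; b_0^2 c_0^2 - 4a_0 c_0^3 - 4b_0^3 d_0 - 27 a_0^2 d_0^2 + 18 a_0 b_0 c_0 d_0
\]
one finds that every term has $p$-adic valuation $\geq 2$ except $-4a_0 c_0^3$ of valuation $\val(a_0)$; parity then forces $\val(a_0) \geq 2$. In the triple-root case, the three roots $r_1,r_2,r_3$ of $f(t,1)=0$ in $E$ share a common residue mod $p$; choosing the basis so that this residue is $0$ and the triple root sits at $[1:0]$, the $r_i$ lie in $p\mathcal{O}_E$, and the coefficients $(a_0, b_0, c_0, d_0)$ equal $(-s_3, s_2, -s_1, 1)$ up to a unit, where $s_k$ denotes the $k$-th elementary symmetric function of the $r_i$. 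The bounds $\val(s_k) \geq k$ then deliver the required divisibilities.

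The count of lines in each of the five factorization types is elementary: in case (ii) one line is a simple root and $q$ are non-roots; in case (iii) three are simple roots and $q-2$ are non-roots; in case (iv) one line is a double root, one is simple, and $q-1$ are non-roots; in case (v) one line is a triple root and $q$ are non-roots; in case (i) all $q+1$ lines are non-roots. The main obstacle is the discriminant-parity/elementary-symmetric-function argument in cases (iv) and (v); once those divisibility bounds are in place, the content formula above reads off the claimed values by inspection.
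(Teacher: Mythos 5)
The paper's own ``proof'' of this lemma is the single sentence ``This is straightforward,'' so there is nothing to compare against; your write-up supplies the missing content, and it is essentially correct. The reduction to the formula $c(\Lambda_\ell)=\min(c-1+\val(a_0),\,c+\val(b_0),\,c+1+\val(c_0),\,c+2+\val(d_0))$ is right (the identification of the content of a lattice with the $p$-adic content of its binary cubic follows from the multiplication table in subsection \ref{subsec:cubicRingsFCs}, and the exact values in each case then come from $\val(a_0)=0$, resp.\ $\val(b_0)=0$, $\val(c_0)=0$, $\val(d_0)=0$ at a non-root, simple, double, triple root), and you have correctly isolated the only nontrivial inputs: the bounds $\val(a_0)\geq 2$ in case (iv) and $\val(a_0)\geq 3$, $\val(b_0)\geq 2$ in case (v), which is precisely where the hypotheses $p\nmid 6$ and $\mathcal{O}_E/\mathcal{O}$ unramified enter. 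The discriminant-parity argument for the double root is fine ($\mathrm{disc}(f\cdot g)=\det(g)^2\mathrm{disc}(f)$ since $\mathrm{disc}$ has degree $4$ in the coefficients and weight $6$ under substitution, and $\mathrm{disc}(f_{max})\in\mathcal{O}^\times$). One imprecision in the triple-root case: the roots of $f_0(1,t)$ do not in general lie in $E$ itself but in the Galois closure $\widetilde{E}$ (or an étale algebra built from $E$); what you actually need, and what is true, is that they lie in an \emph{unramified} extension of $F$, so that ``reduces to $0$'' forces normalized valuation $\geq 1$ rather than merely $>0$, whence $\val(s_k)\geq k$. (Note also that you need the leading coefficient $d_0$ of $f_0(1,t)$ to be a unit so that the roots of the reduction are the reductions of the roots; this holds because the triple root sits at $[1:0]$.) With that correction the argument is complete; in fact the same Newton-polygon/symmetric-function argument also yields $\val(a_0)\geq 2$ in the double-root case, so the discriminant computation could be dispensed with and cases (iv) and (v) treated uniformly.
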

\begin{proof} This is straightforward.\end{proof}

From the lemma, and the fact that $T(p^{-1})= p^{-1}T(p)$, it is straightforward to evaluate $M_h(z)$.  For example, if $c = c(h) \geq 2$, then there are no ``boundary conditions'', and one obtains the following for each of the $5$ possible cases.  Set $v = \val(\det(h))$ and $g(z) = q^2 z^{v-c+1}(1-z^{c+2}) +z^{v-c-1}(1-z^{c})+ qz^{v-c}(1-z^{c+1})$.  Then 
\begin{align*} f_0 \text{ irred} &: g(z) + q(q+1) z^{v-c+2}(1-z^{c}) + (q+1)z^{v-c+1}(1-z^{c-1})\\
f_0 = \ell q &: g(z) + q\left(z^{v-c+1}(1-z^{c+1}) + q z^{v-c+2}(1-z^{c})\right) + \left(z^{v-c}(1-z^{c}) + q z^{v-c+1}(1-z^{c-1})\right)\\
f_0 = \ell_1 \ell_2 \ell_3 &: g(z) +q\left(3 z^{v-c+1}(1-z^{c+1}) +(q-2)z^{v-c+2}(1-z^{c})\right) \\ &\;\;\; + \left(3 z^{v-c}(1-z^{c}) + (q-2)z^{v-c+1}(1-z^{c-1})\right) \\
f_0 = \ell_1^2 \ell_2 &: g(z) + q\left(z^{v-c+1}(1-z^{c+1}) + z^{v-c}(1-z^{c+2})+(q-1)z^{v-c+2}(1-z^{c})\right) \\ &\;\;\; + \left(z^{v-c}(1-z^{c})+z^{v-c-1}(1-z^{c+1}) + (q-1)z^{v-c+1}(1-z^{c-1})\right)\\
f_0 = \alpha \ell^3 &: g(z) + q\left(z^{v-c-1}(1-z^{c+3})+qz^{v-c+2}(1-z^{c})\right) + \left(z^{v-c+2}(1-z^{c+2})+qz^{v-c+1}(1-z^{c-1})\right).
\end{align*}

From this table, one obtains that $B_0(z)P_h(z) -z^2 M_h(z)$ is as follows, in each of the above cases, when $c \geq 2$:
\[\begin{array}{c|l} f_0 \text{ irred} &  z^{v-c}\left(1+qz-z^3-qz^4\right) = z^{v-c}(1+qz)(1-z^3)\\
f_0 = \ell q & z^{v-c}\left(1+qz-z^2-qz^3\right) = z^{v-c}(1+qz)(1-z^2) \\
f_0 = \ell_1 \ell_2 \ell_3 & z^{v-c}\left(1+qz-3z^2+(2-3q)z^3+2q z^4\right) = z^{v-c}(1+qz)(1-z)^2(1+2z) \\
f_0 = \ell_1^2 \ell_2 &  z^{v-c}\left(1+(q-1)z-(q+1)z^2 -(q-1)z^3+qz^4\right) = z^{v-c}(1+qz)(1-z)(1-z^2)\\
f_0 = \alpha \ell^3 & 0.
\end{array}.\]

We also require the following lemma.
\begin{lemma} One has $\val(h) = c(h)$ if and only if $f_0 \neq \ell^3$ modulo $p$. \end{lemma}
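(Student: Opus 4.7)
The plan is to combine the preceding equivalence---namely that under the non-degeneracy of $f_{max}$ modulo $p$, the condition ``$T(x_0(h))$ a ring'' is equivalent to ``$\val(h) = c(h)$''---with an explicit Smith-form description of the cubic form of $T(h_0)$. Writing $x_0(h) = k\mm{p^r}{}{}{1}k'$ with $k, k' \in \GL_2(\mathcal{O})$ and $r \geq 0$, and setting $f' = f_{max} \cdot k = (a', b', c', d')$ (a $\GL_2(\mathcal{O})$-translate of $f_{max}$, hence still non-degenerate mod $p$), the lemma reduces to two checks: $f_0 \not\equiv \ell^3 \pmod p$ when $\val(h) = c(h)$, and $f_0 \equiv \ell^3 \pmod p$ when $\val(h) > c(h)$.

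When $\val(h) = c(h)$ one has $h_0 = x_0(h)$, and the explicit form of $T(x_0(h))$ already computed above is $(a'/p^r, b', c'p^r, d'p^{2r})$. For $r = 0$ this is a $\GL_2(\mathcal{O})$-translate of $f_{max}$, hence non-degenerate mod $p$ and in particular not a cube of a line. For $r \geq 1$, it reduces modulo $p$ to $u^2\bigl((a'/p^r)u + b'v\bigr)$; for this to be $\equiv \ell^3 \pmod p$ one would need $b' \equiv 0 \pmod p$, but combining this with $p \mid a'$ forces $f' \equiv v^2(c'u + d'v) \pmod p$, which is degenerate and contradicts the non-degeneracy of $f_{max}$.

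When $\val(h) > c(h)$, set $s = \val(h) - c(h) > 0$; then $h_0 = p^s x_0(h) = k \diag(p^{s+r}, p^s) k'$, and running the paper's derivation of the binary cubic of $T(h_0)$ from the formula $f_0 \propto \det(h_0)^{-1} f_{max}((x,y)\widetilde{h_0})$---now with both Smith entries positive---yields, up to a $\GL_2(\mathcal{O})$-action and a unit, the form $(A p^{s-r}, B p^s, C p^{s+r}, D p^{s+2r})$ where $(A, B, C, D)$ is a $\GL_2(\mathcal{O})$-translate of $f_{max}$. Since $s \geq 1$, all but the first coefficient are divisible by $p$; the defining property $c(T(h_0)) = 0$ forces the first coefficient to be a unit, so $\val(A) = r - s$ and $r \geq s$. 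Reducing modulo $p$ then gives $f_0 \equiv \alpha u^3 \equiv \alpha\ell^3 \pmod p$, a cube of a line (still a cube after the final $\GL_2(\mathcal{O})$ change of variables).

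The main obstacle is the second case: because $T(x_0(h))$ is not itself a ring, one cannot simply scale an already-constructed good basis of $T(x_0(h))$ to obtain one for $T(h_0)$. Instead one has to compute the cubic form of $T(h_0)$ directly via $\det(h_0)^{-1} f_{max}((x,y)\widetilde{h_0})$ applied to $h_0 = p^s x_0(h)$; the Smith normal form makes this bookkeeping tractable, and once the displayed coefficient expressions are in hand the rest of the argument reduces to elementary $p$-adic valuation checks and $\GL_2(\mathbb{F}_p)$-factorization type arguments.
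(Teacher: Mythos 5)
Your proof is correct and follows essentially the same route as the paper's: put $x_0(h)$ (resp.\ $h_0$) in Smith normal form, compute the coefficients of the associated binary cubic explicitly, and use non-degeneracy of $f_{max}$ modulo $p$ to force $b' \not\equiv 0$ in the first direction and to force the single non-unit-free coefficient pattern $\alpha\ell^3$ in the second. The only difference is bookkeeping — you carry the $\GL_2(\mathcal{O})$ factors $k, k'$ explicitly where the paper absorbs them by a change of variables — which does not change the argument.
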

\begin{proof} First suppose that $\val(h) = c(h)$.  This means $p \nmid h_0$, and thus we can suppose (making an appropriate change of variables) that $h_0 = \diag(p^r,1)$ with $r \geq 0$.  And if $r =0$, $f_0 = f_{max}$, so we are done.  Thus we can suppose $r \geq 1$.  Then $f_0(x,y) = \frac{1}{p^r} f_{max}(x,p^r y) = \frac{a}{p^{r}}x^3 + bx^2 y + cp^r xy^2 + p^{2r}d y^3$.  Hence $p^{r}|a$; set $a' = a/p^r$. Then $f_0 = a'x^3 + bx^2 y = x^2(a'x + by)$ modulo $p$.  To check this direction of the lemma, we thus need to see that $b$ is not $0$ modulo $p$.  But if $b =0 $ mod $p$, then $f_{max} = cxy^2 +dy^3$ mod $p$, which cannot happen by assumption.  Hence this direction of the lemma is verified.

Conversely, suppose $v(h) \neq c(h)$, so that we may assume $h_0 = p^{t} \diag(p^r,1)$ with $t \geq 1$.  Now, one always has that $f_{max} \cdot \left(p^{r} \diag(p^r,1)\right)$ is integral, thus $1 \leq t \leq r$.  Hence
\[f_0 = f_{max} \cdot h_0 = \frac{1}{p^{r-t}}(a x^3 + bp^r x^2 y + c p^{2r} xy^2 + d p^{3r} y^3) = a'x^3 + bp^{t} x^2 y + c p^{r+t} xy^2 + dp^{2r+t}y^3.\]
Since $t \geq 1$, $f_0 = a' x^3 = \alpha \ell^3$ modulo $p$, as desired.\end{proof}

Combining the previous calculations, we obtain the following proposition.  Note that $1-z + N'(T,h_0)z = 1 + (N(T_{max})-\epsilon(h_0))z$.
\begin{proposition}\label{prop:CRident} Let the notation be as above.  Then 
\[(1+qz)^{-1}L(E,s)\left(B_0(z)P_h(z) - z^2 M_h(z)\right) = \charf(\val(h) = c(h))z^{v-c}(1 + (N(f_{max})-\epsilon(h_0))z).\]
\end{proposition}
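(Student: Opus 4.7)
The plan is a case-by-case verification of the identity, splitting on the factorization type of the binary cubic $f_0 = f_{\max} \cdot h_0$ modulo $p$. The preceding lemma first disposes of the case $\val(h) \neq c(h)$: this happens precisely when $f_0 \equiv \alpha \ell^3 \pmod{p}$, and the last line of the tabulation of $B_0(z)P_h(z) - z^2 M_h(z)$ shows the left-hand side vanishes. Since the characteristic function on the right-hand side also vanishes, both sides are zero.

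For the four non-degenerate types of $f_0 \bmod p$, the tabulations give $B_0 P_h - z^2 M_h = z^{v-c}(1+qz) F(z)$ with $F(z) \in \{1-z^3,\, 1-z^2,\, (1-z)^2(1+2z),\, (1-z)(1-z^2)\}$ in the cases irreducible, line-times-irreducible-quadratic, three distinct lines, and a repeated-line-times-simple-line, respectively. The factor $(1+qz)^{-1}$ cancels cleanly, and the identity reduces to checking $L_p(E,s) F(z) = 1 + (N(f_{\max}) - \epsilon(h_0))z$. Because $E$ is unramified at $p$ by hypothesis, $f_{\max}$ is non-degenerate modulo $p$, so $N(f_{\max}) \in \{0,1,3\}$, giving the three possible values $L_p(E,s) = 1/(1+z+z^2)$, $1/(1-z^2)$, or $1/(1-z)^2$ respectively.

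It remains to pin down which combinations of (type of $f_0$, type of $f_{\max}$, value of $\epsilon(h_0)$) actually occur. Using the Cartan decomposition I would normalize $x_0(h)$ to $\diag(p^r,1)$ modulo $\GL_2(\mathcal{O})$ on both sides; then $\epsilon(h_0) = 1$ iff $r = 0$, in which case $f_0 \equiv f_{\max} \pmod{p}$ and the first three $f_0$-types are paired with matching $f_{\max}$-types. If $r \geq 1$, integrality of $f_0$ forces $p^r$ to divide the coefficient of $y^3$ in $f_{\max}$ (so $[0{:}1]$ is a root of $f_{\max} \bmod p$), and a direct reduction shows $f_0 \bmod p$ acquires a $y^2$ factor, hence is necessarily of type $\ell_1^2 \ell_2$. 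The numerical checks are then routine; for instance, in the totally split matched case one gets $(1-z)^2(1+2z)/(1-z)^2 = 1+2z = 1+(3-1)z$, and when $f_{\max} = \ell \cdot q$ with $r \geq 1$ one gets $(1-z)(1-z^2)/(1-z^2) = 1-z = 1+(1-2)z$.

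The main obstacle I expect is the combinatorial bookkeeping in the last step: verifying that the matching between $f_0$-type and $f_{\max}$-type is exhaustive and non-contradictory, in particular that the $\ell_1^2\ell_2$ case for $f_0$ excludes $f_{\max}$ being irreducible (which would force the coefficient of $y^3$ in $f_{\max}$ to be a unit, contradicting $r \geq 1$). A secondary concern is that the tabulation of $B_0 P_h - z^2 M_h$ was derived under the tacit assumption $c \geq 2$; for the boundary values $c \in \{0,1\}$ one must check that the Hecke-shifted terms $P_{hy_i}$ with formally negative content vanish automatically, thanks to the factor $(1-z^{c+1})$ in the definition of $P$, so that the tabulated formula extends uniformly to all $c \geq 0$.
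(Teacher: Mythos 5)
Your proposal follows essentially the same route as the paper: for $c \geq 2$ one reads off $B_0(z)P_h(z)-z^2M_h(z)$ from the tabulation by reduction type of $f_0$, matches it against the local factor $L_p(E,s)$ determined by the splitting of $E$ at $p$, and notes that the only ambiguous type $f_0 \equiv \ell_1^2\ell_2 \pmod p$ forces $\epsilon(h_0)=2$ and splits into the two subcases $E_p = F\times K$ and $E_p = F\times F\times F$, both of which check out numerically; the paper likewise leaves the boundary cases $c\in\{0,1\}$ to the reader. One small caveat on your secondary concern: the factor $(1-z^{c'+1})$ vanishes only when the content $c'$ equals $-1$, whereas $T(p^{-1})$ can lower the content by $2$, so the boundary terms are not killed automatically by that factor; rather, one must use the convention that $P_x=0$ whenever $T(x)$ fails to be a ring and rerun the computation of $M_h(z)$ with the content-change lemma for $c=0,1$.
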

\begin{proof} We leave the proof of this identity in the cases $c(h)=0$ and $c(h)=1$ to reader.  Thus, suppose $c = c(h) \geq 2$.

Now, in all but the case that $f_0 = \ell_1^2 \ell_2$ mod $p$, the proposition follows immediately from the above calculations.  In case $f_0 = \ell_1^2 \ell_2$ mod $p$, then we can have either $E= F \times K$ or $E= F \times F \times F$.  In the first case, the left-hand side becomes $z^{v-c}(1-z)$.  In the second case, the left-hand side becomes $z^{v-c}(1+z)$.  In both cases, because $\ell_1^2 \ell_2$ does not correspond to the maximal ring, $\epsilon(h_0)=2$.  Thus, when $E = F \times K$, $1 + (N(T_{max})-\epsilon(h_0))z = 1+ (1-2)z = 1-z$, and when $E = F \times F \times F$, $1 + (N(f_{max})-\epsilon(h_0))z = 1+ (3-2)z = 1+z$.  Thus we get agreement in both cases.  This completes the proof of the proposition.\end{proof}

We have now proved (\ref{eqn:CRident1}), and with it, Theorem \ref{thm:unram}.

\subsection{The Dirichlet series} By Proposition \ref{prop:CRident}, one obtains that
\[\frac{L(\pi_p,Std,s-1)}{L(E,s)\zeta_F(2s-2)} = \frac{1}{1-z} \left(\sum_{[h]}{L(m(h) v) |\det(h)|^{-2} P_h(z)}\right).\]
In other words, one obtains
\[ \frac{L(\pi_p,Std,s-1)}{L(E,s)\zeta_F(2s-2)} = \sum_{[h], [\lambda]}{L(m(h) v) |\det(h)|^{s-2} |\lambda|^{-s} A_0(\lambda,h)}.\]

Make the change of variables $h = \lambda x$.  Then we finally obtain
\[\frac{L(\pi_p,Std,s-1)}{L(E,s)\zeta_F(2s-2)} = \sum_{[x] \text{ cubic ring},[\lambda]}{L(m(\lambda x) v) |\lambda^2 \det(x)|^{-2} |\lambda \det(x)|^{s}}.\]
This is the desired Dirichlet series for the standard $L$-function on $G_2$.  Converting this local expression to the ``classical'' notation of section \ref{sec:MFs} proves Theorem \ref{thm:DS}.

\section{Archimedean zeta integral}\label{sec:arch} In this final section, we combine our work from the previous two sections, and analyze the archimedean zeta integral that arises from the global integral $I(\varphi,\Phi,s)$ when $\varphi$ is a modular form of even weight $n$.  Throughout this section, the ground field $F$ is the field of real numbers $\R$.  We are able to relate this integral to products of gamma functions $\Gamma(s)$ and a certain explicit integral involving binary cubic forms, of the type considered in \cite{shintani}.
  
Suppose $\Phi: \g_E \rightarrow \V_n^\vee$ is a Schwartz function satisfying $\Phi(r v) = r \cdot \Phi(v)$ for all $v \in \g_E$ and $r\in K_E$, the maximal compact subgroup of $G_E(\R)$.  Denote by $\{\cdot,\cdot\}_{K}$ the symmetric $K_E$-invariant pairing on $\V_n$ or $\V_n^\vee$.  In this section, we analyze the integral
\[I(s,\Phi) = \int_{N^{0,E}(\R) \backslash \GL_1(\R) \times G_2(\R)}{|t|^{s} \{\Phi(t g^{-1} \widetilde{v_E}), \W_\chi(g)\}_{K} \,dg}\]
for a particular choice of $\Phi$.

\subsection{Setup} We now record the setup in greater detail.  Denote by $pr_{K}: \g_E\otimes \C \rightarrow \sl_2$ the $K_E$-equivariant surjection to the Lie algebra of the long root $\SU(2) \subseteq K_E$.  As in section \ref{sec:genWhit}, we can identify this $\sl_2$ with $Sym^2(V_2)$, and thus there is a $n^{th}$ power map $\sl_2 \rightarrow \V_n \simeq \V_n^\vee$, which we write as $X \mapsto X^n$ for $X \in \sl_2$.  We write $\{e_{\ell},h_{\ell},f_{\ell}\}$ for the $\sl_2$-triple of this $\sl_2$ from \cite{pollackQDS}.  For $X \in \g_E$, we write $||X|| = B_{\theta}(X,X)^{1/2}$.  Here $B_{\theta}(X,Y) = -B(X,\theta(Y))$ is a positive-definite symmetric pairing on $\g_E$.  See \cite[section 4]{pollackQDS} for the Cartan involution $\theta$ and invariant form $B(X,Y)$ on $\g_E$.  With these preliminaries, we define $\Phi(v) = pr_K(v)^n e^{-||v||^2}$. 

From this definition, one obtains
\begin{align*} f(\gamma_0 g,\Phi,s) &= \int_{\GL_1(\R)}{|t|^{s}\Phi(t g^{-1} \widetilde{v_E})\,dt} \\ &= \int_{\GL_1(\R)}{|t|^{s}pr_K(t g^{-1} \widetilde{v_E})^{n}e^{-t^2 ||g^{-1} \widetilde{v_E}||^2}\,dt} \\ &= \Gamma\left(\frac{s+n}{2}\right) pr_{K}(g^{-1} \widetilde{v_E})^{n} ||g^{-1}\widetilde{v_E}||^{-(s+n)}.\end{align*}
Here we have used that $n$ is even.

Denote by $\chi: N(\R) \rightarrow \C^\times$ the character given by $\chi(n) = e^{i \langle v_E, n \rangle}$.  For $z \in \mathcal{H}_1^{\pm}$, set
\[r_0(z) = (1,-z,z^2,-z^3) = E_{12}-z v_1 + z^2 \delta_3 - z^3 E_{23}\]
in $N^{ab} \otimes \C$.  From Theorem \ref{thm:FE}, we have $\W_\chi(m) = \sum_{-n \leq v \leq n}{\W_\chi^v(m) \frac{x^{n+v}y^{n-v}}{(n+v)!(n-v)!}}$ with
\begin{equation}\label{eqn:Wchim}\W_\chi^v(m) = \det(m)^n |\det(m)| \left(\frac{|\langle v_E, m r_0(i)\rangle|}{\langle v_E, m r_0(i)\rangle}\right)^{v}K_v\left(|\langle v_E, m r_0(i)\rangle|\right).\end{equation}

For $m \in M(\R)$ and $n \in N(\R)$ in the Levi and unipotent radical of the Heisenberg parabolic of $G_2(\R)$, we set
\[x(n,m) = (nm)^{-1} \widetilde{v_E} = m^{-1} \widetilde{v_E} + \det(m)^{-1}\langle v_E, \overline{n} \rangle E_{13} \in \g_E.\]

\begin{lemma} With the notation above, one has the following:
\begin{enumerate}
\item $pr_K(x(n,m)) = \frac{1}{4}\det(m)^{-1}\left(\langle \widetilde{v_E}, m r_0(-i)\rangle e_{\ell} - \langle \widetilde{v_E}, m r_0(i) \rangle f_{\ell} - i \langle \widetilde{v_E}, \overline{n} \rangle h_{\ell}\right)$;
\item $||x(n,m)||^{2} = |\det(m)|^{-2}\left( |\langle \widetilde{v_E}, m r_0(i) \rangle|^{2} + |\langle \widetilde{v_E}, \overline{n}\rangle|^{2}\right)$.\end{enumerate}
\end{lemma}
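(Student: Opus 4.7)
The plan is to split
\[x(n,m) = m^{-1}\widetilde{v_E} + \det(m)^{-1}\langle v_E, \overline{n}\rangle E_{13}\]
and treat the two summands separately, reducing both parts of the lemma to explicit linear algebra in the $h_i,\bar h_j$-basis of $\p\otimes\C$ from section~\ref{sec:genWhit}. For the $E_{13}$-piece, the basis identities $4u_2 = E_{13}-E_{31}+v_2+\delta_2$ and $\frac{4}{3}r_2 = E_{13}-E_{31} - \frac{1}{3}(v_2+\delta_2)$ immediately give $E_{13}-E_{31} = u_2+r_2$, and since $E_{13}+E_{31}\in\p$ the $\k$-component of $E_{13}$ is $\frac{1}{2}(u_2+r_2)$. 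Further projecting onto the long-root $\sl_2 = \langle e_u,h_u,f_u\rangle$ kills $r_2$ and yields $\frac{1}{2}u_2 = -\frac{i}{4}h_\ell$, which after multiplication by $\det(m)^{-1}\langle v_E,\overline{n}\rangle$ gives the $h_\ell$-term in~(1). (Here we use $\langle \widetilde{v_E},\overline{n}\rangle=\langle v_E,\overline{n}\rangle$ since $\overline{n}\in W$ pairs only with the $G_2$-part of $\widetilde{v_E}$.)

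For the $m^{-1}\widetilde{v_E}$ summand, I would expand it in the complex basis $\{(v+iu)^3,\,(v+iu)^2(v-iu),\,(v+iu)(v-iu)^2,\,(v-iu)^3\}$ of $Sym^3V_2\otimes E\otimes\C$, using the map $\n\to Sym^3V_2$ of section~\ref{sec:genWhit} (tensored by $E$) together with the identification $r_0(z)\leftrightarrow (u-zv)^3$, so that the pairings $\langle\widetilde{v_E}, m\,r_0(\pm i)\rangle$ are recognised as the coefficients of $(v\mp iu)^3$. Consulting the Iwasawa decompositions for $h_{\pm 3}, h_{\pm 1}$ and their conjugates, only $h_{-3}$ and $\bar h_{-3}$ carry a long-root $\sl_2$-component ($-4e_u$ and $+4f_u$ respectively), while $h_{\pm 1}, \bar h_{\pm 1}$ project entirely into the short-root $\sl_2$ and so vanish under $pr_K$. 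This produces the $e_\ell$- and $f_\ell$-coefficients of~(1). For the norm in~(2), one observes that $E_{13}$ spans $N_0=[N,N]$ and is $B_\theta$-orthogonal to the $N^{ab}$-lift containing $m^{-1}\widetilde{v_E}$, so $\|x(n,m)\|^2$ is a sum of two squares. Using the diagonal pairings $B_\theta(\bar h_{\pm 3}, h_{\pm 3})=16$ and $B_\theta(\bar h_{\pm 1}, h_{\pm 1})=16/3$ together with $B_\theta(\bar h_i, h_j)=0$ for $i\ne j$, and the Cayley-transform identification of $\n\otimes\C$ with the appropriate piece of $\p\otimes\C$, the four cubic coefficients collapse into the single quantity $|\det(m)|^{-2}|\langle\widetilde{v_E}, m\,r_0(i)\rangle|^2$ (with $|\cdot|$ the Hermitian norm on $E\otimes\C$); normalising $\|E_{13}\|^2=1$ yields the remaining contribution $|\det(m)|^{-2}|\langle v_E,\overline{n}\rangle|^2$.

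The main obstacle is that the section~\ref{sec:genWhit} computations are phrased for $\g_2$, while $\widetilde{v_E}$ lives in $\g_E = \g_2\oplus V_7\otimes E^0$. One must therefore verify that the Cartan involution, the Killing form, the Heisenberg parabolic structure, and the $h_i/\bar h_j$-basis all restrict compatibly along this $\g_2$-decomposition, so that the identities invoked above extend $E$-linearly in the coefficients $\omega,\theta$ appearing in $\widetilde{v_E}$. Since $E^0$ enters as a trivial $\g_2$-summand of $\g_E$, this compatibility is essentially formal; once it is in place, both formulas follow from the linear-algebra manipulations sketched above.
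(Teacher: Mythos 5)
Your part (1) is essentially the paper's argument carried out by hand: the paper simply writes
\[
pr_{K}(x) = B(x,f_{\ell})e_{\ell} + \tfrac{1}{2} B(x,h_{\ell}) h_{\ell} + B(x,e_{\ell}) f_{\ell},
\]
with $B$ normalized by $B(e_\ell,f_\ell)=1$, and evaluates the three pairings; your route through $E_{13}-E_{31}=u_2+r_2$ and the Iwasawa decompositions of $h_{\pm3},h_{\pm1}$ computes the same orthogonal projection. Two small imprecisions: $h_3$ and $h_{-1}$ do carry $h_u$-components (so it is not true that ``only $h_{\pm3},\overline{h}_{\pm3}$ carry a long-root component''), but since those two elements have no $\mathfrak{n}$-part this does not affect your conclusion; and the reason $pr_K$ kills the $V_7\otimes E^0$-summand of $\g_E$ is that $V_7$ contains no copy of the adjoint representation of the long-root $\SU(2)$ --- worth saying, since it is not a consequence of $E^0$ being a trivial $\g_2$-module.

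The genuine gap is in part (2). You claim that, via the diagonal pairings $(\overline{h}_{\pm3},h_{\pm3})=16$ and $(\overline{h}_{\pm1},h_{\pm1})=16/3$, ``the four cubic coefficients collapse into the single quantity $|\det(m)|^{-2}|\langle \widetilde{v_E}, m\,r_0(i)\rangle|^2$.'' No linear-algebra manipulation produces such a collapse for a general element: for generic $X$ in the degree-one part of $\mathfrak{n}_E$, $B_\theta(X,X)$ is a sum of strictly positive contributions from all four cubic coefficients (with the unequal weights $16$ and $16/3$) and from the $V_7\otimes E^0$-components, whereas $|\langle X, r_0(i)\rangle|^2$ is the modulus-squared of a single linear functional; a positive-definite form on this space cannot equal a rank-two form except on a special subvariety. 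The identity $\| m^{-1}\widetilde{v_E}\|^2 = |\langle m^{-1}\widetilde{v_E}, r_0(i)\rangle|^2$ holds precisely because $\widetilde{v_E}$ is \emph{rank one}, i.e.\ lies in the minimal $G_E$-orbit of $E_{13}$ (this is the content of the lifting law $v_E\rightsquigarrow\widetilde{v_E}$), and this is exactly the input the paper's proof invokes and your proposal omits. Your parenthetical ``Hermitian norm on $E\otimes\C$'' also conflicts with the statement being proved: $\langle\widetilde{v_E}, m\,r_0(i)\rangle$ must be a single complex number, since it is the $\alpha$ that later appears inside $K_v(|\alpha|)$ and as $\alpha^*$. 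To repair the proof you should establish (or cite) the rank-one property and deduce the norm identity from it, rather than from the Killing-form tables.
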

\begin{proof} Denote by $B$ the scalar multiple of the Killing form on $\g_E$ normalized so that $B(e_{\ell},f_{\ell}) = 1$.  Then for $x \in \g_E$, one has
\[pr_{K}(x) = B(x,f_{\ell})e_{\ell} + \frac{1}{2} B(x,h_{\ell}) h_{\ell} + B(x,e_{\ell}) f_{\ell}.\]
From this, one obtains the first statement of the lemma.  For the second statement, one uses the identity $|| m^{-1} \widetilde{v_E}||^2 = |\langle m^{-1}\widetilde{v_E},  r_0(i) \rangle|^{2}$ that holds because $m^{-1} \widetilde{v_E}$ is rank one.\end{proof}

Putting the various pieces together, and applying the Iwasawa decomposition,  we arrive at
\[I(s,\Phi) = \Gamma\left(\frac{s+n}{2}\right) \int_{\GL_2(\R)}\int_{(N^{0,E}\backslash N)(\R)}{\frac{|det(m)|^{-3} e^{i \langle v_E, \overline{n}\rangle}}{||x(n,m)||^{(s+n)}} \{pr_K(x(n,m))^{n}, \W_\chi(m) \}_K\,dn\,dm}.\]
In the rest of this section we will compute this integral.

\subsection{The computation} To go further, we need an expression for $\{pr_K(x(n,m))^{n}, \W_\chi(m) \}_K$.  To ease notation, set $\alpha = \langle \widetilde{v_E}, m r_0(i)\rangle$ and $\beta = \langle \widetilde{v_E}, \overline{n} \rangle$.  The Lie algebra $\sl_2$ of the complexified long root $\SU(2) \subseteq K$ is identified with $Sym^2(V_2)$ via the map $e_{\ell} \mapsto x^2$, $h_{\ell} \mapsto -2xy$ and $f_{\ell} \mapsto -y^2$.  Thus $x(n,m) = \frac{1}{4}\det(m)^{-1}(\alpha^* x^2 + 2i \beta xy + \alpha y^2)$.  We normalize the pairing $\{ \cdot, \cdot \}_K$ so that $\{x^{n-v}y^{n+v}, x^{n+v}y^{n-v}\}_K = (-1)^{n+v} (n+v)! (n-v)!$.  Plugging in (\ref{eqn:Wchim}), with these normalizations one obtains
\begin{align*}\{pr_K(x(n,m))^{n}, \W_\chi(m)\}_K &= \frac{n!}{2^{2n}}|\det(m)| \sum_{j_1+j_2 +j_3 = n}(-1)^{j_3-j_1}\frac{(\alpha^*)^{j_1}}{(j_1)!}\frac{(2i\beta)^{j_2}}{(j_2)!} \frac{\alpha^{j_3}}{(j_3)!} \\ &\qquad \qquad \qquad \qquad \qquad \times \left(\frac{|\alpha|}{\alpha}\right)^{j_3-j_1}K_{j_3-j_1}(|\alpha|)\\ &= \frac{|\det(m)|}{2^{2n}} \sum_{j_1+j_2 +j_3 = n}{(-1)^{j_3-j_1}\frac{n!}{(j_1)!(j_2)!(j_3)!} (2i\beta)^{j_2}|\alpha|^{n-j_2} K_{j_3-j_1}(|\alpha|)}.\end{align*}

Iterating the identity $K_{n+1}(x) + K_{n-1}(x) = -2K_n'(x)$, one obtains
\[\sum_{j_1,j_3 \geq 0, j_1+j_3 = n-j_2}{\frac{(n-j_2)!}{(j_1)!(j_3)!}K_{j_3-j_1}(|\alpha|)} = (-2)^{n-j_2}K_0^{(n-j_2)}(|\alpha|).\]
Thus
\[ \{pr_K(x(n,m))^{n}, \W_\chi(m)\}_K = \frac{|\det(m)|}{2^n}\sum_{j}{\binom{n}{j}(i\beta)^j |\alpha|^{n-j} K_0^{(n-j)}(|\alpha|)}.\]
Therefore
\begin{align*} I(s,\Phi) = 2^{-n} \Gamma\left(\frac{s+n}{2}\right) & \int_{\GL_2(\R) \times (N_{0,E}\backslash N)(\R)} \frac{|\det(m)|^{s+n-2} e^{i\beta}}{(|\alpha|^2+|\beta|^2)^{\frac{s+n}{2}}} \\  & \;\;\; \times \left(\sum_{j}{\binom{n}{j}(i\beta)^{n-j} |\alpha|^{j} K_0^{(j)}(|\alpha|)}\right)\,dn\,dm.\end{align*}

For $r, y > 0$, one has
\[\frac{\Gamma(s)}{2\Gamma(1/2)} \int_{\R}{\frac{e^{irx}}{(x^2+y^2)^s}\,dx} = \left(\frac{r}{2y}\right)^{s-1/2}K_{s-1/2}(ry)\]
and thus
\[\frac{\Gamma(s)}{2\Gamma(1/2)} \int_{\R}{\frac{(ix)^k e^{irx}}{(x^2+y^2)^s}\,dx} = \partial_r^{k}\left(\left(\frac{r}{2y}\right)^{s-1/2}K_{s-1/2}(ry)\right).\]
We obtain that
\[\Gamma\left(\frac{s+n}{2}\right) \left(\int_{\R}{\frac{e^{i\beta} (i\beta)^j}{(|\alpha|^2+|\beta|^2)^{(s+n)/2}}\,d\beta}\right) |\alpha|^{n-j}K_0^{(n-j)}(|\alpha|)\]
differs from 
\[ \left\{\partial_r^{j}\left( \left(\frac{r}{2|\alpha|}\right)^{(s+n-1)/2}K_{(s+n-1)/2}(r|\alpha|) \right) \partial_{r}^{n-j}\left(K_0(r|\alpha|)\right) \right\}|_{r=1}\]
by a nonzero absolute constant. Equivalently,
\[\Gamma\left(\frac{s+n}{2}\right) \left(\int_{\R}{\frac{e^{i\beta} (i\beta)^j}{(|\alpha|^2+|\beta|^2)^{(s+n)/2}}\,d\beta}\right) |\alpha|^{n-j}K_0^{(n-j)}(|\alpha|)\]
is equal  to
\[2^{-s/2}|\alpha|^{1-s}\left\{\partial_y^{n-j}\left(K_0(y)\right)\partial_y^{j}\left(y^{(s+n-1)/2}K_{(s+n-1)/2}(y)\right)\right\}|_{y=|\alpha|}\]
up to a nonzero absolute constant.

We have therefore proved
\begin{equation}\label{eqn:niceForm1}I(s,\Phi) \stackrel{\cdot}{=} 2^{-s/2}\int_{\GL_2(\R)}{|\det(m)|^{s+n-2} |\alpha|^{1-s}\partial_y^{n}\left(y^{(s+n-1)/2}K_{(s+n-1)/2}(y)K_0(y)\right)|_{y =|\alpha|}\,dm}\end{equation}
where $\stackrel{\cdot}{=}$ means that the two sides are equal up to an element of $\C^\times$, and $\alpha = \langle \widetilde{v_E}, m r_0(i)\rangle$.  Recall $w_0 = \mm{-1}{}{}{1}$.  Because $w_0 r_0(i) = r_0(-i) =r_0(i)^*$ and the integrand in (\ref{eqn:niceForm1}) only depends on the absolute value of $|\alpha|$, this integral can be replaced by an integral over $\GL_2(\R)^{0}$.  Thus we can split the integral in (\ref{eqn:niceForm1}) into an integral over $\SL_2(\R)$ and $\R^\times_{>0}$.  

For $m=g w \in \GL_2(\R)$, with $g \in \SL_2(\R)$ and $w \in \R^\times_{>0}$, set $\alpha_0 = \langle \widetilde{v_E}, gr_0(i) \rangle$ so that $|\alpha| = w|\alpha_0|$.  Thus
\begin{align*} I(s,\Phi) &\stackrel{\cdot}{=} 2^{-s/2}\int_{\SL_2(\R) \times \R^\times_{>0}}{w^{s+2n-3}|\alpha_0|^{1-s}\partial_y^{n}\left(y^{(s+n-1)/2}K_{(s+n-1)/2}(y)K_0(y)\right)|_{y =|\alpha|}\,dg\,d^{\times}w} \\ &= 2^{-s/2}\int_{\SL_2(\R) \times \R^\times_{>0}}{w^{s+2n-3}|\alpha_0|^{-(2s+2n-4)}\partial_y^{n}\left(y^{(s+n-1)/2}K_{(s+n-1)/2}(y)K_0(y)\right)|_{y = w}\,dg\,d^{\times}w} \\ &= 2^{-s/2}\left(\int_{\SL_2(\R)}{|\alpha_0|^{-(2s+2n-4)}\,dg}\right) \\ &\qquad \qquad \times \left(\int_{0}^{\infty}{w^{s+2n-3} \partial_y^{n}\left(y^{(s+n-1)/2}K_{(s+n-1)/2}(y)K_0(y)\right)|_{y = w} \frac{dw}{w}}\right).\end{align*}

Integrating by parts on the second integral, we obtain
\[I(s,\Phi) \stackrel{\cdot}{=} 2^{-s/2}\frac{\Gamma(s+2n-3)}{\Gamma(s+n-3)}J(s+n-2) \int_{0}^{\infty}{w^{(3s+3n-7)/2}K_{(s+n-1)/2}(w)K_0(w)\,\frac{dw}{w}}\]
where $J(\nu) = \int_{\SL_2(\R)}{|\alpha_0|^{-2\nu}\,dg}$.  From the Mellin transform of the $K$-Bessel functions and an application of Barnes' lemma one has the general identity
\[\int_{0}^{\infty}{K_{\mu}(y) K_{\nu}(y) y^{s} \frac{dy}{y}} = 2^{s-3} \frac{ \Gamma\left(\frac{s+ \mu + \nu}{2}\right)\Gamma\left(\frac{s+ \mu - \nu}{2}\right)\Gamma\left(\frac{s- \mu + \nu}{2}\right)\Gamma\left(\frac{s - \mu -\nu}{2}\right)}{\Gamma(s)},\]
valid for $Re(s)$ large compared with $Re(\nu), Re(\mu)$.  Applying this, we get
\[I(s,\Phi) \stackrel{\cdot}{=} 2^{s}J(s+n-2)\frac{\Gamma(s+2n-3)\Gamma(s+n-2)^2\Gamma((s+n-3)/2)^2}{\Gamma(s+n-3)\Gamma((3s+3n-7)/2)}.\]

It remains to evaluate the function $J(\nu)$.  In coordinates, one has
\[J(\nu) = \int_{\mathcal{H}_1}{|p_{E}(z)|^{-2\nu}y^{3\nu} \frac{dx\,dy}{y^2}}\]
where $p_E$ is the cubic polynomial defining the cubic extension $E/\Q$. One can also manipulate as follows:
\begin{align*} \Gamma(\nu)J(\nu) &= \Gamma(\nu)\int_{\SL_2(\R)}{|\alpha_0|^{-2\nu}\,dg} = \int_{\SL_2(\R)}{\left(\int_{0}^{\infty}{t^\nu e^{-|\alpha_0|^2 t}\,\frac{dt}{t}}\right)\,dg} \\ &= \int_{\GL_2(\R)^{+}}{|\det(g)|^{\nu} e^{-|\alpha|^2}\,dg} \\ &\stackrel{\cdot}{=} \int_{V^{*}}{|q(v)|^{\nu/2} e^{-|\langle v,r_0(i)\rangle|^2} \frac{dV}{|q(v)|}}.\end{align*}
Here $q$ is the invariant quartic form on $W = Sym^{3}(V_2) \otimes \det^{-1}$; $V^{*}$ is the $\GL_2(\R)$-orbit that consists of the binary cubics with $3$ distinct roots; and $d V$ is the additive Haar measure on $W$, so that $\frac{dV}{|q(v)|}$ is $\GL_2(\R)$-invariant.

One can show that the function $v \mapsto e^{-|\langle v, r_0(i)\rangle|^2}$ is a Schwartz function on $V^{*}$.  It follows that this final integral if of the form considered in \cite{shintani}.  From \cite[page 162]{shintani}, one obtains
\[\Gamma(\nu)J(\nu) = h(\nu) \Gamma\left(\frac{\nu}{2}-\frac{1}{6}\right)\Gamma\left(\frac{\nu}{2}\right)^2\Gamma\left(\frac{\nu}{2}+\frac{1}{6}\right)\]
for some function $h(\nu)$ that is analytic in $\nu$.

Summing up, we have proved the following result.
\begin{theorem} Let the notation be as above.  
\begin{enumerate}
\item One has
\[I(s,\Phi) \stackrel{\cdot}{=} 2^{s} \left(\int_{V^*}{|q(v)|^{(s+n-2)/2}e^{-|\langle v, r_0(i)\rangle|^2}\frac{dV}{|q(v)|}}\right) \frac{\Gamma(s+2n-3)\Gamma(s+n-2)\Gamma((s+n-3)/2)^2}{\Gamma(s+n-3)\Gamma((3s+3n-7)/2)}.\]
\item There is an entire function $R_n(s)$ so that
\[I(s,\Phi) = R_n(s) \frac{\Gamma(s+2n-3)\Gamma(s+n-2)\Gamma(s+n-3)}{\Gamma((s+n-1)/2)}.\]
\end{enumerate}
\end{theorem}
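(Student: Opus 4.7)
The plan is to observe that essentially all the analytic work has been carried out in the body of text preceding the theorem; what remains is to combine the pre-existing formulas with a few classical gamma-function identities. More precisely, the derivation culminating in
\[I(s,\Phi) \stackrel{\cdot}{=} 2^{s}J(s+n-2)\frac{\Gamma(s+2n-3)\Gamma(s+n-2)^2\Gamma((s+n-3)/2)^2}{\Gamma(s+n-3)\Gamma((3s+3n-7)/2)}\]
will be our starting point.

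For part (1), I would simply substitute the integral representation
\[\Gamma(\nu)J(\nu) \stackrel{\cdot}{=} \int_{V^{*}}{|q(v)|^{\nu/2}\,e^{-|\langle v,r_0(i)\rangle|^2}\,\frac{dV}{|q(v)|}}\]
with $\nu = s+n-2$ into the formula above; the factor $\Gamma(s+n-2)$ in the denominator cancels one copy of $\Gamma(s+n-2)^2$ from the numerator and the identity in part (1) drops out immediately. The fact that $v \mapsto e^{-|\langle v,r_0(i)\rangle|^2}$ extends to a Schwartz function on $W(\mathbb{R})$ (so that the Shintani-type integral makes sense, at least for $\mathrm{Re}(s)$ large) can be extracted from the expression $r_0(i) = (1,-i,-1,i)$ and the inequality $|\langle v,r_0(i)\rangle|^2 \geq c\|v\|^2$ on the open orbit $V^*$.

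For part (2), I would feed Shintani's identity $\Gamma(\nu)J(\nu) = h(\nu)\Gamma(\nu/2-1/6)\Gamma(\nu/2)^2\Gamma(\nu/2+1/6)$ with $h$ entire into the same starting expression to obtain
\[I(s,\Phi) = [\text{entire}]\cdot\Gamma(s+2n-3)\Gamma(s+n-2)\cdot\frac{\Gamma(u-\tfrac{1}{6})\Gamma(u)^2\Gamma(u+\tfrac{1}{6})\Gamma(u-\tfrac{1}{2})^2}{\Gamma(s+n-3)\,\Gamma(3u-\tfrac{1}{2})}\]
with $u=(s+n-2)/2$. Now I would apply the Gauss triplication formula
\[\Gamma(3u-\tfrac{1}{2}) = \frac{3^{3u-1}}{2\pi}\Gamma(u-\tfrac{1}{6})\Gamma(u+\tfrac{1}{6})\Gamma(u+\tfrac{1}{2})\]
to collapse the ratio $\Gamma(u-\tfrac{1}{6})\Gamma(u+\tfrac{1}{6})/\Gamma(3u-\tfrac{1}{2})$ into $2\pi\cdot 3^{1-3u}/\Gamma(u+\tfrac{1}{2})$, noting $\Gamma(u+\tfrac{1}{2})=\Gamma((s+n-1)/2)$. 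Finally I would apply Legendre duplication with $z=(s+n-3)/2$, namely $\Gamma((s+n-3)/2)\Gamma((s+n-2)/2) = 2^{4-s-n}\sqrt{\pi}\,\Gamma(s+n-3)$, squared, to rewrite $\Gamma(u)^2\Gamma(u-\tfrac{1}{2})^2/\Gamma(s+n-3)$ as an entire multiple of $\Gamma(s+n-3)$. Bundling the exponential factors $2^{s}$, $3^{1-3u}$, $2^{8-2s-2n}$, $\pi$, $\sqrt{\pi}$, and $h(s+n-2)$ into a single entire function $R_n(s)$ yields the claimed shape
\[I(s,\Phi) = R_n(s)\,\frac{\Gamma(s+2n-3)\Gamma(s+n-2)\Gamma(s+n-3)}{\Gamma((s+n-1)/2)}.\]

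The only real obstacle is the bookkeeping: one must track carefully which factors depending on $s$ are truly entire (the exponentials $2^s$, $3^{-3s/2}$, and $h(s+n-2)$) versus which produce poles (the gamma factors), and verify that the triplication/duplication identities line up the $1/6$ and $1/2$ shifts exactly. There is no conceptually new ingredient beyond Shintani's analysis of the zeta integral of binary cubic forms, which provides the decomposition of $\Gamma(\nu)J(\nu)$ as an entire multiple of the four-fold gamma product.
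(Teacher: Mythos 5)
Your proposal is correct and follows essentially the same route as the paper's own (very terse) proof: part (1) is exactly the substitution of the Shintani-integral expression for $\Gamma(\nu)J(\nu)$ with $\nu=s+n-2$ into the formula already derived for $I(s,\Phi)$, and part (2) is Shintani's factorization of $\Gamma(\nu)J(\nu)$ combined with the triplication and duplication cases of the Gauss multiplication formula, with your gamma-shift bookkeeping checking out. One cosmetic correction: $e^{-|\langle v,r_0(i)\rangle|^2}$ does \emph{not} extend to a Schwartz function on all of $W(\R)$ (it fails to decay along the plane where $\langle v,r_0(i)\rangle=0$), but your inequality $|\langle v,r_0(i)\rangle|^2\geq c\|v\|^2$ on $V^*$ is precisely what shows it is a Schwartz function on the open orbit, which is all that the Shintani integral requires.
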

\begin{proof} The first part we have already for proved.  The second follows from the first, \cite[page 162]{shintani}, and a ``cubic'' application of the Gauss multiplication formula for the gamma function.\end{proof}

Evaluating the integral $J(\nu)$ exactly remains an interesting problem.

\bibliography{G2_MFs_bib}

\providecommand{\bysame}{\leavevmode\hbox to3em{\hrulefill}\thinspace}
\providecommand{\MR}{\relax\ifhmode\unskip\space\fi MR }
\providecommand{\MRhref}[2]{%
  \href{http://www.ams.org/mathscinet-getitem?mr=#1}{#2}
}
\providecommand{\href}[2]{#2}
\begin{thebibliography}{PWZ18}

\bibitem[BFG95]{bfgNU}
Daniel Bump, Masaaki Furusawa, and David Ginzburg, \emph{Non-unique models in
  the {R}ankin-{S}elberg method}, J. Reine Angew. Math. \textbf{468} (1995),
  77--111. \MR{1361787}

\bibitem[BK00]{bk2}
A.~Braverman and D.~Kazhdan, \emph{{$\gamma$}-functions of representations and
  lifting}, Geom. Funct. Anal. (2000), no.~Special Volume, Part I, 237--278,
  With an appendix by V. Vologodsky, GAFA 2000 (Tel Aviv, 1999).

\bibitem[DF64]{deloneFaddeev}
B.~N. Delone and D.~K. Faddeev, \emph{The theory of irrationalities of the
  third degree}, Translations of Mathematical Monographs, Vol. 10, American
  Mathematical Society, Providence, R.I., 1964. \MR{0160744}

\bibitem[FH91]{fultonHarris}
William Fulton and Joe Harris, \emph{Representation theory}, Graduate Texts in
  Mathematics, vol. 129, Springer-Verlag, New York, 1991, A first course,
  Readings in Mathematics. \MR{1153249}

\bibitem[GGS02]{ganGrossSavin}
Wee~Teck Gan, Benedict Gross, and Gordan Savin, \emph{Fourier coefficients of
  modular forms on {$G_2$}}, Duke Math. J. \textbf{115} (2002), no.~1,
  105--169. \MR{1932327}

\bibitem[Gin93]{ginzburgG2}
David Ginzburg, \emph{On the standard {$L$}-function for {$G_2$}}, Duke Math.
  J. \textbf{69} (1993), no.~2, 315--333. \MR{1203229}

\bibitem[GL09]{grossLucianovic}
Benedict~H. Gross and Mark~W. Lucianovic, \emph{On cubic rings and quaternion
  rings}, J. Number Theory \textbf{129} (2009), no.~6, 1468--1478. \MR{2521487}

\bibitem[GS15]{gurevichSegal}
Nadya Gurevich and Avner Segal, \emph{The {R}ankin-{S}elberg integral with a
  non-unique model for the standard {$L$}-function of {$G_2$}}, J. Inst. Math.
  Jussieu \textbf{14} (2015), no.~1, 149--184. \MR{3284482}

\bibitem[GW94]{grossWallach1}
Benedict~H. Gross and Nolan~R. Wallach, \emph{A distinguished family of unitary
  representations for the exceptional groups of real rank {$=4$}}, Lie theory
  and geometry, Progr. Math., vol. 123, Birkh\"auser Boston, Boston, MA, 1994,
  pp.~289--304. \MR{1327538}

\bibitem[GW96]{grossWallach2}
\bysame, \emph{On quaternionic discrete series representations, and their
  continuations}, J. Reine Angew. Math. \textbf{481} (1996), 73--123.
  \MR{1421947}

\bibitem[JR97]{jiangRallis}
Dihua Jiang and Stephen Rallis, \emph{Fourier coefficients of {E}isenstein
  series of the exceptional group of type {$G_2$}}, Pacific J. Math.
  \textbf{181} (1997), no.~2, 281--314. \MR{1486533}

\bibitem[KO95]{kosekiOda}
Harutaka Koseki and Takayuki Oda, \emph{Whittaker functions for the large
  discrete series representations of {${\rm SU}(2,1)$} and related zeta
  integrals}, Publ. Res. Inst. Math. Sci. \textbf{31} (1995), no.~6, 959--999.
  \MR{1382562}

\bibitem[Ngo14]{ngo}
Bao~Chau Ngo, \emph{On a certain sum of automorphic {$L$}-functions},
  Automorphic forms and related geometry: assessing the legacy of {I}. {I}.
  {P}iatetski-{S}hapiro, Contemp. Math., vol. 614, Amer. Math. Soc.,
  Providence, RI, 2014, pp.~337--343.

\bibitem[Pola]{pollackLL}
Aaron Pollack, \emph{Lifting laws and arithmetic invariant theory}, Camb. J.
  Math., to appear.

\bibitem[Polb]{pollackGJ}
\bysame, \emph{{Unramified Godement-Jacquet theory for the spin similitude
  group}}, Journal of the Raman. Math. Soc., to appear.

\bibitem[Pol17]{pollackSMF}
\bysame, \emph{The spin {$L$}-function on {${\rm GSp}_6$} for {S}iegel modular
  forms}, Compos. Math. \textbf{153} (2017), no.~7, 1391--1432. \MR{3705262}

\bibitem[Pol18]{pollackQDS}
\bysame, \emph{The {F}ourier expansion of modular forms on quaternionic
  exceptional groups}.

\bibitem[PS17]{pollackShahKS}
Aaron Pollack and Shrenik Shah, \emph{On the {R}ankin-{S}elberg integral of
  {K}ohnen and {S}koruppa}, Math. Res. Lett. \textbf{24} (2017), no.~1,
  173--222. \MR{3662279}

\bibitem[PSR88]{psRallisNewWay}
I.~Piatetski-Shapiro and S.~Rallis, \emph{A new way to get {E}uler products},
  J. Reine Angew. Math. \textbf{392} (1988), 110--124. \MR{965059}

\bibitem[PWZ18]{pollackWanZydor}
A.~{Pollack}, C.~{Wan}, and M.~{Zydor}, \emph{{A $\mathrm{G}\_2$-period of a
  Fourier coefficient of an Eisenstein series on $\mathrm{E}\_6$}}, ArXiv
  e-prints (2018).

\bibitem[Rum97]{rumelhart}
Karl~E. Rumelhart, \emph{Minimal representations of exceptional {$p$}-adic
  groups}, Represent. Theory \textbf{1} (1997), 133--181. \MR{1455128}

\bibitem[Sav99]{savin}
Gordan Savin, \emph{A class of supercuspidal representations of {$G_2(k)$}},
  Canad. Math. Bull. \textbf{42} (1999), no.~3, 393--400. \MR{1703700}

\bibitem[Seg17]{segal}
Avner Segal, \emph{A family of new-way integrals for the standard
  {$L$}-function of cuspidal representations of the exceptional group of type
  {$G_2$}}, Int. Math. Res. Not. IMRN (2017), no.~7, 2014--2099. \MR{3658191}

\bibitem[Shi72]{shintani}
Takuro Shintani, \emph{On {D}irichlet series whose coefficients are class
  numbers of integral binary cubic forms}, J. Math. Soc. Japan \textbf{24}
  (1972), 132--188. \MR{0289428}

\bibitem[SV00]{springerVeldkampBook}
Tonny~A. Springer and Ferdinand~D. Veldkamp, \emph{Octonions, {J}ordan algebras
  and exceptional groups}, Springer Monographs in Mathematics, Springer-Verlag,
  Berlin, 2000. \MR{1763974}

\bibitem[Wal03]{wallach}
Nolan~R. Wallach, \emph{Generalized {W}hittaker vectors for holomorphic and
  quaternionic representations}, Comment. Math. Helv. \textbf{78} (2003),
  no.~2, 266--307. \MR{1988198}

\bibitem[Wal15]{wallachGK}
\bysame, \emph{On the {G}elfand-{K}irillov dimension of a discrete series
  representation}, Representations of reductive groups, Progr. Math., vol. 312,
  Birkh\"auser/Springer, Cham, 2015, pp.~505--516. \MR{3495808}

\bibitem[Yam90]{yamashita1}
Hiroshi Yamashita, \emph{Embeddings of discrete series into induced
  representations of semisimple {L}ie groups. {I}. {G}eneral theory and the
  case of {${\rm SU}(2,2)$}}, Japan. J. Math. (N.S.) \textbf{16} (1990), no.~1,
  31--95. \MR{1064445}

\bibitem[Yam91]{yamashita2}
\bysame, \emph{Embeddings of discrete series into induced representations of
  semisimple {L}ie groups. {II}. {G}eneralized {W}hittaker models for {${\rm
  SU}(2,2)$}}, J. Math. Kyoto Univ. \textbf{31} (1991), no.~2, 543--571.
  \MR{1121177}

\bibitem[Yos99]{yoshinaga}
Tetsumi Yoshinaga, \emph{Embeddings of discrete series into some induced
  representations of a group of {$G_2$} type}, S\=urikaisekikenky\=usho
  K\=oky\=uroku (1999), no.~1094, 29--43, Automorphic forms on
  ${{\rm{S}}p}(2;{{\bf{R}}})$ and ${{\rm{S}}U}(2,2)$, II (Kyoto, 1998).
  \MR{1751054}

\bibitem[YY96a]{yoshinagaYamashita}
Tetsumi Yoshinaga and Hiroshi Yamashita, \emph{The embeddings of discrete
  series into principal series for an exceptional real simple {L}ie group of
  type {$G_2$}}, J. Math. Kyoto Univ. \textbf{36} (1996), no.~3, 557--595.
  \MR{1417827}

\bibitem[YY96b]{yoshinagaYamashitaPJAS}
\bysame, \emph{The embeddings of discrete series into principal series for an
  exceptional real simple {L}ie group of type {$G_2$}}, Proc. Japan Acad. Ser.
  A Math. Sci. \textbf{72} (1996), no.~4, 78--81. \MR{1391903}

\end{thebibliography}
\bibliographystyle{amsalpha}
\end{document}